\newcommand{\NN}{\mathbb{N}}
\newcommand{\QQ}{\mathbb{Q}}
\newcommand{\RR}{\mathbb{R}}
\newcommand{\CC}{\mathbb{C}}
\newcommand{\der}{\partial}
\renewcommand{\ker}{\operatorname{Ker}}
\newcommand{\im}{\operatorname{Im}}
\newcommand{\id}{\mathrm{id}}
\def\<{\langle}
\def\>{\rangle}
\newcommand{\trans}[1]{
  \mathrel{\vbox{\offinterlineskip\ialign{%
    \hfil##\hfil\cr
    $\scriptscriptstyle*$\cr
    \noalign{\kern0.3ex}
    #1\cr
}}}}
\newcommand{\cum}{{\textstyle \varint}}
\newcommand{\mmt}[2]{d_{#1,#2}}
\newcommand{\nonzero}[1]{#1^\times}
\newcommand{\fafac}[2]{#1^{\underline{#2}}}
\newcommand{\matrices}{\mathcal{M}}
\newcommand{\matspc}[3]{#1^{#2}_{#3}}
\newcommand{\matmon}[2]{\matrices_{#2}(#1)}
\newcommand{\matfix}[2]{\tilde{\matrices}_{#2}(#1)}
\newcommand{\substmon}[1]{\matrices(#1)}
\newcommand{\matma}{K[\mathcal{M}]}
\newcommand{\matsa}{K[\nonzero{\mathcal{M}}]}
\newcommand{\galg}{\mathcal{F}}
\newcommand{\ogalg}{\mathcal{G}}
\newcommand{\balg}{\mathcal{H}}
\newcommand{\trp}[1]{#1^\intercal}
\newcommand{\evl}{\text{\scshape\texttt e}}
\newcommand{\indhier}[1]{\mathfrak{H}(#1)}
\newcommand{\BialgHom}{\mathrm{BialgHom}}
\newcommand{\AlgHom}{\mathrm{AlgHom}}
\newcommand{\counit}{\setbox0=\hbox{\text{$1$}}%
  \raisebox{\ht0}{\text{\scalebox{1}[-1]{$1$}}}}
\newcommand{\circplus}{\mathrel{+\mkern-11.5mu\circ}}
\newcommand{\smallmat}[4]{\big(\begin{smallmatrix}#1&#2\\
    #3&#4\end{smallmatrix}\big)}
\newcommand{\sbigotimes}{%
  \mathop{\mathchoice{\textstyle\bigotimes}{\bigotimes}{\bigotimes}{\bigotimes}}}
\newcommand{\sprod}{%
  \mathop{\mathchoice{\textstyle\prod}{\prod}{\prod}{\prod}}}
\let\phi\varphi
\let\epsilon\varepsilon
\let\mathbb\mathbbm
\newtheorem{theorem}{Theorem}[section]
\newtheorem{proposition}[theorem]{Proposition}
\newtheorem{lemma}[theorem]{Lemma}
\newtheorem{corollary}[theorem]{Corollary}
\newtheorem{conjecture}[theorem]{Conjecture}
\theoremstyle{definition}
\newtheorem{definition}[theorem]{Definition}
\newtheorem{example}[theorem]{Example}
\newtheorem{remark}[theorem]{Remark}
\title[Multivariable Integration and Linear Substitutions]{%
  An Algebraic Study of Multivariable\\ Integration and Linear
  Substitution\footnote{MCS: 47G10, 16W99, 16T10, 68W30; 12H05, 47G20,
    13N10, 16W70}}
\author{Markus Rosenkranz}
\address{
 	School of Mathematics, Statistics and Actuarial Science,
	University of Kent,
	Canterbury CT2 7NF, England}
\email{M.Rosenkranz@kent.ac.uk}
\author{Xing Gao}
\address{School of Mathematics and Statistics,
Key Laboratory of Applied Mathematics and Complex Systems,
Lanzhou University, Lanzhou, Gansu, 730000, P.R. China}
\email{gaoxing@lzu.edu.cn}
\author{Li Guo}
\address{
Department of Mathematics and Computer Science,
Rutgers University,
Newark, NJ 07102, US}
\email{liguo@rutgers.edu}
\date{\today}
\begin{document}
\maketitle
\tableofcontents

\begin{abstract}
  We set up an algebraic theory of multivariable integration, based on
  a hierarchy of Rota-Baxter operators and an action of the matrix
  monoid as linear substitutions. Given a suitable coefficient domain
  with a bialgebra structure, this allows us to build an operator ring
  that acts naturally on the given Rota-Baxter hierarchy. We
  conjecture that the operator relations are a noncommutative
  Gr\"obner basis for the ideal they generate.
\end{abstract}

\section{Introduction}

\subsection{Motivation}

The notion of \emph{integral operator} plays a
fundamental role in Analysis, Physics and Stochastics. Apart from some
early beginnings, the systematic study of integral operators and
integral equations started in earnest by the end of the nineteenth
century, at the hands of towering figures like Hilbert, Fredholm and
Volterra.

Despite their indubitable origin in Analysis, it is often profitable
to study integral operators in an algebraic setting, i.e.\@ to model
them by the crucial notion of \emph{Rota-Baxter
  algebras}~\cite{Guo2012}. The situation is similar to the case of
\emph{differential algebra}, defined to be an (associative) algebra
equipped with a derivation. Its study originated from the algebraic
study of differential equations by Ritt~\cite{Ritt1932,Ritt1966}
starting in the 1930s and has since developed into a vast area of
mathematics research with broad applications, including the mechanical
proof of geometric theorems~\cite{Kolchin1973,Wu1987}. An algebraic
structure that encodes integration first appeared as a special case of
a Rota-Baxter algebra, defined to be an algebra $R$ equipped with a
linear operator $P$ on $R$ satisfying the Rota-Baxter axiom
\begin{equation*}
  P(u)P(v)=P(uP(v))+P(P(u)v)+\lambda P(uv)
\end{equation*}
for all $u, v \in R$. Here $\lambda \in R$ is a prefixed constant
called the weight, which is zero in the case~$P = \int_0^x$ but
nonzero for its discrete analog (partial summation). Let us now name a
few important areas where this approach proved to be successful:
\begin{itemize}
\item The original formulation of Rota-Baxter algebra (first called
  Baxter algebras) was in the context of \emph{fluctuation theory},
  when Glen Baxter gave a new proof~\cite{Baxter1960} of the so-called
  Spitzer identity, based on the Rota-Baxter axiom. This line of
  research continues to this day; see the references~[12, 61, 121,
  163, 174] in the survey monograph~\cite{Guo2012}.
\item The use of Rota-Baxter algebra for problems in
  \emph{combinatorics} has been firmly established and widely
  popularized~\cite{Rota1969,Rota1995} by Gian-Carlo Rota (which is why
  they are partly named after him). For a typical example,
  see~\cite[\S3.3]{Guo2012} and the references therein.
\item One of the main applications of Rota-Baxter algebras is the
  operator form of the so-called \emph{classical Yang-Baxter
    equation}, named after the physicists Chen-Ning~Yang and Rodney
  Baxter (\emph{not} Glen Baxter). For an overview and current
  developments in this area, see the references~[14, 26, 46, 127, 164]
  in~\cite{Guo2012}.
\item Another important application in Physics is in the
  \emph{renormalization} of perturbative quantum field theory, in
  particular the algebraic Birkhoff decomposition established by
  Connes and Kreimer~\cite{ConnesKreimer2000}. A survey of current
  literature in this domain is given in~\cite[\S2.5]{Guo2012}.
\item The relation of Rota-Baxter algebras and \emph{multiple zeta
    values} and their generalizations is a fascinating
  topic~\cite{Hoffman1997,Hoffman2005,Zagier1994}. For more on this
  topic, the reader may read~\cite[\S3.2]{Guo2012} and the literature
  survey in~\cite[\S3.4]{Guo2012}.
\item Algorithmic tools for solving and factoring \emph{linear
    boundary problems}~\cite{Duffy2001,Stakgold1979} are
  developed~\cite{Rosenkranz2005,RosenkranzRegensburger2008a,RosenkranzRegensburgerTecBuchberger2012}
  based on integro-differential algebras (Rota-Baxter algebras with a
  suitable differential structure). The solution of the boundary
  problem is denoted by the Green's operator, an element in a ring of
  integral operators.
\end{itemize}

In this paper, we shall employ Rota-Baxter algebras for creating an
\emph{algebraic theory of multivariable integation} under the action
of linear substitutions. As far as we are aware, this is the first
time that integral operators and substitutions are studied from a
systematic algebraic perspective.

Our original motivation comes from the last of the application areas
listed above: One may build up an \emph{abstract theory of linear
  boundary problems} and their Green's
operators~\cite{RegensburgerRosenkranz2009}, ``abstract'' in the sense
that it abstracts from the ring-theoretic structures (derivations,
Rota-Baxter operators, characters), retaining only their linear
skeleton. The advantage of the abstract theory is that it encompasses
both linear ordinary differential equations (LODEs) and linear partial
differential equations (LPDEs), both scalar and vector problems, both
continuos (differential) and discrete (difference) equations. However,
unlike the integro-differential setting
of~\cite{RosenkranzRegensburger2008a}, it does not---and
cannot---provide algorithmic tools.

For developing an algebraic theory of boundary problems for LPDEs,
along with the required algorithmic tools, it is first of all
necessary to develop a suitable \emph{ring of partial
  integro-differential operators} capable of expressing the Green's
operators of (some of) these boundary problems. An adhoc example of
such an operator ring---together with applications to solving certain
simple LPDE boundary problems---can be found
in~\cite{RosenkranzPhisanbut2013}.

It must be emphasized that a suitable ring of partial
integro-differential operators is not just the tensor product of the
corresponding ordinary integro-differential operator rings. Such a
ring would be too weak for expressing the Green's operators of even
very simple LPDEs like the inhomogeneous wave equation~$u_{tt} -
u_{xx} = f$ mentioned in~\cite[\S7]{RegensburgerRosenkranz2009}, and
the problems treated in~\cite{RosenkranzPhisanbut2013}. The reason for
this is evident from a geometric perspective: The tensor product
provides only \emph{axis-parallel integrators} while most Green's
operators require integration over oblique domains (slanted lines,
triangular areas, skewed volumes). This contrasts with differential
operators, where any directional derivative can be represented---via
the chain rule---as a linear combination of partial
(``axis-parallel'') derivatives.

For rectifying this defect, it is sufficient to add \emph{linear
  substitutions} to the operator ring. Their algebraic interation with
the axis-parallel integrators is regulated by (a special case of) the
substitution rule of integration. The precise algebraic formulation
turns out to be surprisingly delicate, and the current axiomatization
(Definition~\ref{def:hier-Rota-Baxter}) should be seen as a first
attempt in this direction. Our central question can be stated thus:
``What is the algebraic structure of linear substitutions under
partial integral operators?'' See the Conclusion for some further
ideas on how to improve and generalize the current setup.

The presence of linear substitutions also makes the choice of the
coefficient algebra much more delicate (see the beginning of
Section~\ref{sec:rota-baxter-bialgebras} for a short explanation). To
incorporate closure under linear substitutions into the algebraic
setting, we require the coefficients to form a \emph{Hopf algebra}
with a suitably compatible Rota-Baxter operator; we call the resulting
structure a Rota-Baxter bialgebra
(Definition~\ref{def:Rota-Baxter-bialgebra}).

For simplifying the further treatment, we have decided to keep
differential operators out of the picture. In other words, we are
building up rings of \emph{partial integral operators} (with linear
substitutions and coefficient functions), rather than partial
integro-\emph{differential} operators. In view of the applications for
boundary problems, this is clearly not sufficient but it provides a
crucial intermediate stage. In fact, the adjunction of partial
differential operators (coming from derivations of the base algebra)
leads to new operator relations that are much simpler than the ones
treated here: Loosely speaking, differential operators ``march
through'' to the far right (unlike integral operators) since the chain
rule leads to a full decoupling, as indicated by our earlier remarks
above. We shall develop the resulting operator ring in a subsequent
paper, along with its application to expressing Green's operators of
LPDE boundary problems.

Apart from this deliberate restriction, the operator rings constructed
in this paper are a \emph{vast generalization} of the adhoc ring
created
in~\cite{RosenkranzPhisanbut2013,RosenkranzRegensburgerTecBuchberger2009}
for the purpose of illustrating certain techniques for solving LPDE
boundary problems. To wit, the current ring is not based on the
specific model of multivariate analytic functions as
in~\cite{RosenkranzPhisanbut2013}, and it allows for a wide class of
coefficient algebras (arbitrary Rota-Baxter bialgebras) rather than
just constant coefficients.

From the viewpoint of Analysis, one may also pose the following
question: Let~$R$ be the subring of the ring of linear operators on
the complex function space~$C^\infty(\RR^n)$ generated by the partial
integral operators, the multiplication operators induced by certain
coefficient functions, and all linear substitutions. If~$\Gamma$
denotes the set of all these generators, there is an evident
map~$\CC\langle\Gamma\rangle \to R$ that ``evaluates'' the
indeterminates by the corresponding generators. The
kernel~$\mathcal{R}$ of this map encodes the ideal of \emph{operator
  relations} satisfied in Analysis. If these are known, one has a full
algebraization in the sense that~$R \cong
\CC\langle\Gamma\rangle/\mathcal{R}$. While we have not reached this
goal in the present paper, the relations~$\mathcal{R}'$ used in our
formulation of the operator ring
(Definition~\ref{def:ring-partial-intops}) are clearly an important
subset of~$\mathcal{R}$, and there are good reasons to conjecture that
actually~$\mathcal{R} = \mathcal{R}'$. However, this will presumably
need an analytic proof that has to be approached elsewhere.

\subsection{Structure of the Paper}

In Section~\ref{sec:hier-intdiffalg} we introduce the main structures
needed for the algebraic description of multivariable integration and
linear substitutions. After providing some motivation, we start in
Subsection~\ref{ssec:background-rbh} with an axiomatization of
function algebras over a field~$K$ with a contravariant action of the
matrix monoid~$(\matmon{K}{n}, \cdot)$. These structures, subsequently
named $K$-\emph{hierarchies} (Definition~\ref{def:hierarchy}), serve
as the basic building blocks for the partial integral operator
rings---the latter act on $K$-hierarchies, and they take their
coefficients from among them. Every $K$-hierarchy induces a lattice of
subalgebras, which provide an algebraic formulation for functions
depending on certain prescribed variables. In
Subsection~\ref{ssec:ex-prop-rbh} we add a suitable collection of
Rota-Baxter operators to a $K$-hierarchy to arrive at the pivotal
notion of a \emph{Rota-Baxter hierarchy}
(Definition~\ref{def:hier-Rota-Baxter}). Its core ingredient is an
algebraic formulation of those cases of the substitution rule of
integration that are needed for obtaining normal forms in the operator
ring. The basic properties of Rota-Baxter hierarchies are then
developed (Lemmas~\ref{lem:simple-prop}, \ref{lem:gen-transvec},
\ref{lem:slack}). The central example of a Rota-Baxter hierarchy is
the collection of all smooth multivariate functions
(Example~\ref{ex:classical}).

The introduction of coefficients is studied in
Section~\ref{sec:rota-baxter-bialgebras}. Ignoring first the
Rota-Baxter structure, we start in
Subsection~\ref{ssec:lin-subst-bialg} by describing an algebraic
structure that captures ``separated'' multivariate functions under
linear substitutions. While the coproduct of the bialgebra provides
the basic linear substitution~$x \mapsto x+y$, proper linear
combinations like~$x \mapsto x+\lambda y$ can only be achieved via an
additional ``scaling'' structure, thus leading to the notion of scaled
bialgebra (Definition~\ref{def:scaled-bialgebra}). Intuitively, the
elements in such a scaled bialgebra are the univariate functions from
which one builds all available coefficient functions via linear
substitutions. This is made precise by the \emph{induced hierarchy}
(Definition~\ref{def:induced-hierarchy}), which is indeed a
$K$-hierarchy (Proposition~\ref{prop:induced-hierarchy}), and of
course the smooth algebra---along with certain subalgebras---provides
the prototypical model for this situation
(Example~\ref{ex:scaled-bialg}). Next we combine scaled bialgebras
with Rota-Baxter operators (Subsection~\ref{ssec:int-bialg}). This
requires a certain compatibility relation, which is in fact a special
case of the substitution rule
(Definition~\ref{def:Rota-Baxter-bialgebra}); the induced hierarchy of
such a \emph{Rota-Baxter bialgebra} is then a Rota-Baxter hierarchy
(Theorem~\ref{thm:induced-hierarchy}), and this holds in particular
for the smooth algebra (Example~\ref{ex:Rota-Baxter-bialgebra}). They
form the appropriate coefficient algebras for the operator ring
(Definition~\ref{def:separated-algebra}).

Section~\ref{sec:pios} is devoted to the construction of the operator
ring, focusing first on the identification and verification of
fundamental \emph{operator relations}
(Subsection~\ref{ssec:operator-relations}). From the perspective of
rewriting, the effect of these relations can be described intuitively
as follows: In the first step (Proposition~\ref{prop:1d-subst-rule}),
one \emph{normalizes one-dimensional integrators}---meaning integral
operators with subsequent linear substitution and coefficient---by
ensuring that their associated linear substitution is ``minimal'' (for
integration along the $x_i$-axis: identity matrix except beneath the
$i$-th diagonal element). Then the normalized one-dimensional
integrators are put in ascending \emph{order}
(Proposition~\ref{prop:order-int}), and \emph{repetitions are
  eliminated} (Proposition~\ref{prop:coalesc}). The monomials in the
resulting normal form will therefore consist of a strictly ascending
list of one-dimensional integrators, as will be shown later
(Theorem~\ref{thm:confluence}). In the final
Subsection~\ref{ssec:const-quot} we define the desired \emph{ring of
  partial integral operators} and \emph{linear substitutions}
(Definition~\ref{def:ring-partial-intops}). By its very defintion, it
is clear that it has a \emph{natural action} on the given Rota-Baxter
hierarchy (Proposition~\ref{prop:action}). The corresponding rewrite
system is shown to be \emph{Noetherian}
(Theorem~\ref{thm:termination}), meaning the computation of normal
forms will always terminate. We conclude by conjecturing that the
rewrite system is moreover \emph{confluent}
(Conjecture~\ref{conj:confluence}), meaning the normal forms are in
fact unique.

In the Conclusion we provide some pointers to possible \emph{future
  work}. On the one hand, we propose some thoughts on the next obvious
steps towards building up an algebraic and algorithmic theory of LPDE
boundary problems. On the more speculative side, we discuss also some
ideas about potential generalizations and extensions of our theory.

\subsection{Notation}

The set~$\NN$ of \emph{natural numbers} is assumed to
include~$0$. If~$X$ is any set such that~$0 \in X$, we
write~$\nonzero{X} = \{ x \in X \mid x \neq 0\}$ for its nonzero
elements.

The vector space of~$m \times n$ \emph{matrices} over a field~$K$ is
denoted by~$\matspc{K}{m}{n}$, where~$m=1$ or~$n=1$ is omitted. The
unit vectors of~$K^n$ or~$K_n$ are written as~$e_i$; it will be clear
from the context what~$n$ is, and whether rows or columns are
intended. We denote the corresponding matrix rings by~$\matmon{K}{n} =
\matspc{K}{n}{n}$. The~$n \times n$ identity matrix is denoted
by~$I_n$. We write~$e_{ij} \in \matmon{K}{n}$ for the matrix units,
with the only nonzero entry~$1$ in row~$i$ and column~$j$. Similarly,
$\mmt{i}{\lambda} = I_n + (\lambda-1)e_{ii}$ denotes the scaling
matrix, which is equal to~$I_n$ except that its $i$-th diagonal entry
is~$\lambda$. The vertical composition of two matrices~$M \in
\matspc{K}{r}{n}, N \in \matspc{K}{s}{n}$ is denoted by~$M \oplus N
\in \matspc{K}{r+s}{n}$.

Given a~$K$-\emph{algebra}~$\galg$, we write~$\AlgHom_K(\galg)$ for
the monoid of $K$-algebra endomorphisms on~$\galg$. The opposite of a
monoid or algebra~$\galg$ is denoted by~$\galg^*$. If~$S$ is a
semigroup, we use the notation~$K[S]$ to refer to the semigroup
algebra over~$K$. In this paper, all (co)algebras are assumed to be
(co)commutative, so in particular bialgebras and Hopf algebras are
both commutative and cocommutative. In contract, the word ring shall
designate unital rings that are not necessarily commutative. We apply
the usual notation~$[S,T] = ST - TS$ for the commutator of ring
elements~$S$ and~$T$.

In this paper, all Rota-Baxter operators and derivations are of weight
zero. Since we think of them as inspired from analysis, we write them
as~$\cum$ and~$\der$. These symbols (along with their embellished
variants) will always be used in \emph{operator notatation}. For
example, we write~$(\der_x f) (\cum^y g)$ rather than~$\der_x(f)
\cum^y(g)$. Products---usually denoted by juxtaposition---have
precedence over operators, so~$\cum^y fg$ is to be parsed as~$\cum^y
(fg)$ and~$\cum^y f \cum^y g$ as~$\cum^y (f \cum^y g)$. Note that
derivations like~$\der_x$ are indexed below to indicate their origin
from~$\tfrac{\partial}{\partial x}$; in contrast, integrators
like~$\cum^y$ are indexed above as a reminder of~$\cum_0^y$.

In the sequel, we will use the \emph{standard variables}~$x_1, x_2,
x_3, \dots$ for defining functions of arbitrarily (but finitely) many
variables. So a definition like~$f(x_2) := x_2$ should be carefully
distinguished from~$f(x_1) := x_1$ since the first denotes the
$x_2$-projection $(x_1, x_2, \dots) \mapsto x_2$ but the second the
$x_1$-projection~$(x_1, x_2, \dots) \mapsto x_1$. Sometimes it will be
convenient to use the abbreviations~$x \equiv x_1, y \equiv x_2, z
\equiv x_3$. Since we do not use~$x$ as a shorthand for the
sequence~$(x_1, x_2, x_3, \dots)$, this will create no confusion.  In
the scope of this paper we will only deal with linear substitutions,
hence we will usually drop the qualification ``linear''.

\section{Rota-Baxter Hierarchies}
\label{sec:hier-intdiffalg}

We start by building up the basic operational domains---the algebras
which the prospective ring of partial integro-differential operators
is to operate on. Such domains will be called \emph{Rota-Baxter
  hierarchies} since they encode a notion of multivariable integration
in conjunction with an action of the corresponding substitution monoid.

\subsection{Background to the Concept of Rota-Baxter Hierarchy}
\label{ssec:background-rbh}

As a motivation to the general definition, let us first look at the
classical setting of \emph{multivariate smooth functions}. In the
latter case, we are thinking of functions~$f\colon \RR^n \to \RR$, for
any arity~$n \ge 0$, with the nullary ones denoting constants. To
simplify the book-keeping we pass to the direct limit
\begin{equation*}
  C^\infty(\RR^\infty) := \bigcup_{n \ge 0} C^\infty(\RR^n)
\end{equation*}
of functions depending on arbitrarily (but finitely) many real
variables.

Thus we think of~$\galg := C^\infty(\RR^\infty)$ as an algebra with
the ascending filtration~$\galg_n \subset \galg_{n+1}$ given
by~$\galg_n = C^\infty(\RR^n)$, so we can set up the action of
arbitrary-sized real matrices. An ascending sequence of
algebras~$(\galg_n)$ will be called an \emph{ascending algebra}.

For the rest of this section, let~$K$ be a field of characteristic
zero so that~$\QQ \subseteq K$. We write~$\substmon{K}$ for the monoid of
all $\infty \times \infty$ matrices~$M\colon \NN \times \NN \to K$
that can be written in the form~$M = I + \tilde{M}$, where~$I$ is the
$\infty \times \infty$ identity matrix and where~$\tilde{M}$ is any
row and column finite matrix (meaning all rows except finitely many
and all columns except finitely many are zero). We call~$\substmon{K}$
the (linear) \emph{substitution monoid} over~$K$. Note that it has the
natural ascending filtration
\begin{equation*}
  \substmon{K} = \bigcup_{n \ge 1} \, \big( I + \matmon{K}{n} \big),
\end{equation*}
where we use the embedding~$\matmon{K}{n} \hookrightarrow \matmon{K}{n+1}$
that sends~$M$ to~$\smallmat{M}{0}{0}{1}$. We will also identify
finite matrices~$M \in \matmon{K}{n}$ with their
embedding~$\smallmat{M}{0}{0}{I} \in \substmon{K}$. In particular, we
regard scalars~$\lambda \in K$ as~$\lambda e_{11} + e_{22} + \cdots
\in \substmon{K}$ rather than~$\lambda I$. In the sequel, we will also
need the descending chain
\begin{align*}
  \substmon{K} &= \matfix{K}{0} \supset \matfix{K}{1} \supset \matfix{K}{2}
  \supset \cdots, \qquad\text{where}\\[0.5ex]
  \matfix{K}{n} &= \Big\{
  \begin{pmatrix}
    I_n & 0\\
    0 & M
  \end{pmatrix}
  \Big|\, M \in \substmon{K} \Big\} \subseteq \matma
\end{align*}
is the subring of matrices acting trivially on~$K^n$. A rectangular
matrix~$M \in \matspc{K}{r}{s}$ with~$r < s$ is identified with the
corresponding square matrix~$\tilde{M} \in \matspc{K}{s}{s}$ obtained
from~$M$ by adding the unit vectors~$e_{r+1}, \dots, e_s$ as
additional rows. On the other hand, given~$M \in \matspc{K}{r}{s}$
with~$r > s$, we identify~$M$ with~$\tilde{M} \in \matspc{K}{r}{r}$ by
adding zero columns. In particular, any row~$v \in K_n$ may be viewed
as a matrix~$v \oplus e_2 \oplus \cdots \oplus e_n \in \matspc{K}{n}{n}$,
and adjoining a column vector~$w \in K^n$ to the identity matrix
yields the square matrix matrix~$(I_n,v) \oplus e_{n+1} \in
\matmon{K}{n+1}$. As mentioned above, all square matrices are further
embedded into~$\matma$ via~$\matmon{K}{n} \hookrightarrow
\matmon{K}{n+1}$.

In the classical setting~$K = \RR$, the canonical action of
monoid~$\substmon{\RR}$ on the ascending
$\RR$-algebra~$C^\infty(\RR^\infty)$ is defined as follows. For
given~$M \in \matmon{\RR}{k}$ and~$f \in C^\infty(\RR^l)$, we set~$n =
\max(k,l)$ so that we may take~$M \in \matmon{\RR}{n}$ and~$f \in
C^\infty(\RR^n)$ via the corresponding embeddings. Then we think
of~$M$ as effecting the \emph{change of variables}
\begin{align*}
  \bar{x}_1 &= m_{11} x_1 + \cdots + m_{1n} x_n,\\
  \vdots &\\
  \bar{x}_n &= m_{n1} x_1 + \cdots + m_{nn} x_n,
\end{align*}
and we define the action~$\matmon{\RR}{n} \times C^\infty(\RR^n) \to
C^\infty(\RR^n)$ by~$(M,f) \mapsto f \circ M$. Via the embeddings,
this yields the desired action~$\substmon{\RR} \times
C^\infty(\RR^\infty) \to C^\infty(\RR^\infty)$.

Let us now turn to the general case of a contravariant monoid
action~$\matma \times \galg \to \galg$, meaning a monoid
homomorphism~$\matma^* \to \AlgHom_K(\galg)$, where~$\matma^*$
denotes the opposite monoid of~$\matma$. We require the following
natural compatibility condition. For given~$M \in \substmon{K}$, we
write~$M^*$ for the induced mapping~$\galg \to \galg$. Moreover, we
write~$M_{\lrcorner n}$ for the $n$-th \emph{cut-off substitution},
whose first~$n$ rows coincide with those of~$M$ while the subsequent
ones are~$e_{n+1}, e_{n+2}, \dots$. In~$C^\infty(\RR^\infty)$ this
means~$M_{\lrcorner n}^*$ substitutes only in the first~$n$ variables
while leaving the remaining ones invariant. We call the action
\emph{straight} if~$M^* f = M_{\lrcorner n}^*f$ for all~$M \in
\substmon{K}$ and~$f \in \galg_n$. The canonical action
on~$C^\infty(\RR^\infty)$ is of course straight while for example
shifting the filtration to~$\galg_n := C^\infty(\RR^{n+1})$ leads to
an action that is not straight.

Another crucial property of the classical
example~$C^\infty(\RR^\infty)$ is that evaluation of a function~$f \in
C^\infty(\RR^n)$ at~$x_n = \xi \in \RR$ leaves a function
in~$C^\infty(\RR^{n-1})$. In the general case we shall require this
only for evalation at~$\xi = 0$, which can be described as the
action~$E_n^*$, where~$E_n := I - e_{nn} \in \matmon{K}{n}$ is the
$n$-th \emph{evaluation matrix} (at zero). Adding this requirement to
straightness, we arrive at the following axiomatization of
multivariate functions.

\begin{definition}
  \label{def:hierarchy}
  An ascending $K$-algebra~$(\galg_n)$ is called
  a~$K$-\emph{hierarchy} if it has a straight contravariant monoid
  action of~$\substmon{K}$ such that~$M^*(\galg_n) \subseteq \galg_n$ for
  all~$M \in \matmon{K}{n}$ and~$E_n^*(\galg_n) \subseteq
  \galg_{n-1}$. We write~$\galg$ for the direct limit
  of~$(\galg_n)$. By abuse of language, we refer also to~$\galg$ as a
  hierarchy.
\end{definition}

In detail, a contravariant action means each~$M^*\colon \galg \to
\galg$ is a homomorphism of $K$-algebras that restricts to a
homomorphism~$M^*\colon \galg_n \to \galg_n$ whenever~$M \in
\matmon{K}{n}$. Moreover, we have the usual action axioms~$I^* =
1_{\galg}$ and~$(M \tilde{M})^* = \tilde{M}^* M^*$. Note also that we
have assumed~$\galg_0 = K$, which implies that the action on~$\galg_0$
is trivial (since it fixes~$1 \in K$).

In a $K$-hierarchy~$(\galg_n)$, we can define the following
\emph{dependency lattice} for expressing that some functions depend
only on certain variables~$x_{\alpha_1}, \dots, x_{\alpha_k}$. For
convenience, let us identify strictly increasing sequences~$\alpha_1 <
\cdots < \alpha_k$ with finite subsets~$\{\alpha_1, \dots, \alpha_k\}
\subset \NN$; we will use the notation~$(\alpha_1, \dots, \alpha_k)$
for either of them. Furthermore, we shall identify permutations~$\pi
\in S_n$ with the permutation matrices~$(\pi(e_1), \dots, \pi(e_n))
\in \matmon{K}{n}$. Let~$S_\infty := \bigcup_{n \in \NN} S_n$ be the
group of all permutations with finite support (those fixing all but
finitely many elements of~$\NN$). Then we have an embedding~$S_\infty
\hookrightarrow \substmon{K}$. However, note that the action on column
vectors is accordingly
\begin{equation*}
  \pi \begin{pmatrix} v_1\\\vdots\\v_n \end{pmatrix}
  = \begin{pmatrix} v_{\tilde{\pi}(1)}\\\vdots\\v_{\tilde{\pi}(n)} \end{pmatrix}
\end{equation*}
where~$\tilde{\pi}$ is the inverse of~$\pi$. We introduce the
$K$-subalgebras
\begin{equation*}
  \galg_\alpha := \{ f \in \galg \mid \pi^* f \in \galg_k \}
\end{equation*}
where~$\pi := \pi_\alpha\colon \NN \to \NN$ is the unique permutation
with finite support sending~$j$ to~$\alpha_j$ for~$j = 1, \dots, k$
while restricting to an increasing map~$\NN\setminus\{1,\dots,k\} \to
\NN\setminus\alpha$. (In fact, the action of~$\pi$ outside
of~$\{1,\dots,k\}$ is immaterial because of the straightness
assumption.) By convention we set~$\galg_\emptyset = \galg_0$. One
checks immediately, using the straightness of the action, that~$\alpha
\mapsto \galg_\alpha$ is a monotonic map (in the sense that it
preserves inclusions). Hence we may extend it to arbitrary~$\alpha
\subseteq \NN$ by defining
\begin{equation*}
  \galg_\alpha = \bigcup_{n=1}^\infty \galg_{(\alpha_1, \dots, \alpha_n)}.
\end{equation*}
This yields a complete bounded lattice~$(\galg_\alpha)$
with~$\galg_\alpha \sqcup \galg_\beta = \galg_{\alpha \cup \beta}$
and~$\galg_\alpha \sqcap \galg_\beta = \galg_{\alpha \cap \beta}$,
with bottom element~$\galg_\emptyset = K$ and top element~$\galg_\NN =
\galg$. Moreover, the lattice is complemented with~$\galg_\alpha' =
\galg_{\NN\setminus\alpha}$. Intuitively, $\galg_\alpha$ captures
those functions that depend \emph{at most} on the variables specified
in~$\alpha$ and~$\galg_\alpha'$ those that do \emph{not} depend on these
variables.

The usual \emph{substitution notation}~$f(M_{11} x_1 + \cdots + M_{1n}
x_n, \dots, M_{n1} x_1 + \cdots + M_{nn} x_n)$ can be viewed as a
convenient shorthand for~$M^*(f)$, for a given substitution matrix
\begin{displaymath}
  M = \begin{pmatrix}
    M_{11} & \cdots & M_{1n}\\
    \vdots & \ddots & \vdots\\
    M_{n1} & \cdots & M_{nn}
  \end{pmatrix}
\end{displaymath}
and~$f \in \galg_n$. While we will not employ this notation in the
present paper (a more suitable notation is introduced in
Section~\ref{ssec:operator-relations}), it is certainly useful in a
computational context.

The next step is to add \emph{Rota-Baxter operators} and to describe
their interaction with substitutions. This will lead to an
algebraization of the well-known substitution rule for integrals. For
basic definitions and properties of Rota-Baxter algebras we refer
to~\cite{Guo2012,GuoKeigher2000}.

Let~$R$ be a ring containing~$\QQ$, and let~$(\galg, P)$ be a
Rota-Baxter algebra over~$R$. Then we call~$(\galg, P)$
\emph{ordinary} if~$P$ is injective and~$\im(P) \dotplus R = \galg$ as
$R$-modules. This is an algebraic way of describing~$P$ as an integral
operator on ``univariate functions''. In fact, we get an ordinary
integro-differential algebra~$(\galg, d, P)$, where~$d\colon \galg \to
\galg$ is the unique derivation that sends~$P(f) + c$ to~$f$, for
arbitrary~$f \in \galg$ and~$c \in R$. Hence~$1_\galg - P \circ d$ is
an algebra homomorphism~$\galg \to R$, which we call the
\emph{associated evaluation} of~$(\galg, d, P)$; it is the projector
corresponding to the direct sum~$\im(P) \dotplus R = \galg$.

Having an ordinary Rota-Baxter algebra has the added benefit of having
the \emph{polynomial ring} at our disposal. This holds for all
ordinary integro-differential
algebras~\cite[Prop.~3]{BuchbergerRosenkranz2012}, but we give an
independent proof here that does not make use of the derivation.

\begin{lemma}
  \label{lem:ord-rba-poly}
  Let~$(\galg, P)$ be an ordinary Rota-Baxter algebra over~$R$. Then~$x \mapsto
  P(1)$ defines an embedding~$(R[x], \cum_0^x) \hookrightarrow (\galg, P)$ of
  Rota-Baxter algebras.
\end{lemma}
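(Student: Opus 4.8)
The plan is to construct the map via the universal property of the polynomial ring and then verify the two conditions that make it an embedding of Rota-Baxter algebras: that it intertwines $\cum_0^x$ with $P$, and that it is injective. Write $p := P(1)$. Since $R[x]$ is the free commutative $R$-algebra on the single generator $x$, there is a unique $R$-algebra homomorphism $\phi\colon R[x] \to \galg$ with $\phi(x) = p$, and it satisfies $\phi(x^n) = p^n$. Both $\cum_0^x$ on $R[x]$ and $P$ on $\galg$ are $R$-linear, as is $\phi$, so to check the intertwining relation $\phi \circ \cum_0^x = P \circ \phi$ it suffices to check it on the $R$-basis $\{x^n\}_{n \ge 0}$. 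Because $\cum_0^x(x^n) = \tfrac{1}{n+1} x^{n+1}$ (here I use $\QQ \subseteq R$), the whole intertwining reduces to the single identity
\begin{equation*}
  P(p^n) = \tfrac{1}{n+1}\, p^{n+1} \qquad (n \ge 0).
\end{equation*}

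I would prove this identity by induction on $n$, using only the weight-zero Rota-Baxter axiom $P(u)P(v) = P(uP(v)) + P(P(u)v)$. The case $n = 0$ is just $P(1) = p$. For the inductive step, apply the axiom to $u = 1$ and $v = p^{n-1}$ and substitute the induction hypothesis $P(p^{n-1}) = \tfrac1n p^n$; the two resulting terms on the right are $P(P(p^{n-1})) = \tfrac1n P(p^n)$ and $P(p^n)$, which combine to $(\tfrac1n + 1)P(p^n)$, while the left side is $p \cdot \tfrac1n p^n = \tfrac1n p^{n+1}$, and rearranging gives the claim. This is the computational core of the lemma, but it is a short and standard calculation.

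It remains to show $\phi$ is injective, and this is where the hypothesis that $(\galg, P)$ is \emph{ordinary} enters; I would give an argument that avoids the derivation. The identity above shows $p^i = i\,P(p^{i-1}) \in \im(P)$ for every $i \ge 1$. Suppose now that $\sum_{i=0}^n a_i p^i = 0$ with $a_i \in R$; I claim all $a_i$ vanish, by induction on $n$. Rewriting the relation as $a_0\cdot 1 = -\sum_{i=1}^n a_i p^i$ exhibits $a_0 \cdot 1$ as an element of $R \cap \im(P) = 0$ (using the direct sum $\im(P) \dotplus R = \galg$), so $a_0 = 0$ since $R \hookrightarrow \galg$. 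The remaining relation $\sum_{i=1}^n a_i p^i = 0$ can be written, again via the identity, as $P\!\big(\sum_{i=1}^n i\,a_i\, p^{i-1}\big) = 0$, whence $\sum_{i=1}^n i\,a_i\,p^{i-1} = 0$ by injectivity of $P$; the induction hypothesis then forces $i\,a_i = 0$, and hence $a_i = 0$ (as $i$ is invertible in $R$), for all $i \ge 1$.

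The main obstacle is the injectivity, and more precisely handling it without invoking the derivation $d$ that the ordinary structure otherwise supplies. The key observation is that the very same Rota-Baxter identity which governs the intertwining also makes every positive power of $p$ lie in $\im(P)$ and be divisible by $P$, so that the direct-sum decomposition and the injectivity of $P$ can be played off against each other in a descending induction on the degree. Everything else---well-definedness of $\phi$ and the intertwining relation---is formal once the identity $P(p^n) = \tfrac{1}{n+1}p^{n+1}$ is established.
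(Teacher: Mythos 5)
Your proof is correct and takes essentially the same route as the paper: the injectivity argument is structurally identical (extract the constant term using $\im(P) \dotplus R = \galg$, then cancel the injective $P$ to descend to a lower-degree relation and induct). The only difference is cosmetic --- you derive the morphism from the universal property of $R[x]$ as a free commutative algebra and prove the identity $P(p^n) = \tfrac{1}{n+1}p^{n+1}$ from the Rota-Baxter axiom, where the paper instead cites the initiality of $(R[x], \cum_0^x)$ among Rota-Baxter $R$-algebras and the equivalent formula $P(1)^i = i!\,P^i(1)$ from the literature.
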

\begin{proof}
  Since~$(R[x], \cum_0^x)$ is the initial object in the category of
  Rota-Baxter $R$-algebras~\cite[Cor.~3.2.4]{Guo2012}, there is a
  unique Rota-Baxter morphism~$\iota\colon R[x] \to \galg$, which
  clearly satisfies~$\iota(x) = \iota(\cum_0^x 1) = P(1)$. It remains
  to check that~$\iota$ is injective, so
  %
  %
  we show that~$\iota(p) = 0$ implies~$p=0$ for all polynomials~$p \in
  R[x]$. We use induction on the degree of~$p$. The induction base~$p
  \in R$ is trivial since by definition~$\iota$ acts as the identity
  on~$R$. Now assume the claim for all polynomials of degree less
  than~$k > 0$, and take~$p = p_0 + p_1 x + \cdots + p_k x^k$
  with~$\iota(p) = 0$. Using the property~$P(1)^i = i! \, P^i(1)$
  from~\cite[Thm.~3.3.1]{Guo2012}, we obtain
  \begin{align*}
    -p_0 = \sum_{i=1}^k p_i \, \iota(x)^i = \sum_{i=1}^k i! \, p_i \,
    P^i(1) = P\left(\sum_{i=0}^{k-1} (i+1)! \, p_{i+1} \,
      P^i(1)\right) \in \im(P),
  \end{align*}
  and~$\im(P) \dotplus R = \galg$ implies that~$p_0$ as well as the expression
  on the left-hand side above vanish. Since~$P$ is injective, this implies
  \begin{equation*}
    \iota \left( \sum_{i=0}^{k-1} (i+1) \, p_{i+1} x^i \right) =
    \sum_{i=0}^{k-1} (i+1) \, p_{i+1} P(1)^i =
    \sum_{i=0}^{k-1} (i+1)! \, p_{i+1} P^i(1) = 0,
  \end{equation*}
  and the induction hypothesis yields~$(i+1) \, p_i = 0$ for
  all~$i=0,\dots,k-1$, and hence~$p=0$.
\end{proof}

For an ascending algebra~$(\galg_n)$, it is natural to require an
infinite \emph{collection of Rota-Baxter operators} that we shall
write as~$\cum^{x_n}$.  Since we think of~$\galg_1$ as univariate
functions, we shall require that~$\cum^{x_1}$ is an ordinary
Rota-Baxter operator over~$K = \galg_0$. Analogous assumptions are
imposed for~$\cum^{x_n}$ so that~$\galg_n$ is an ordinary Rota-Baxter
algebra over~$\galg_{n-1}$. For the Rota-Baxter
operators~$\cum^{x_n}$ we shall now postulate the substitution rule
for integration, which we shall only need for certain particular
substitutions.

We introduce the following two special matrices. For~$i < n$ we define
the general \emph{transvection} (= horizontal shear) in the
$x_i$-direction as
\begin{equation}
  \label{eq:gen-transvec}
  T_i(v) =
  \begin{pmatrix}
    1\\
    & \ddots\\
    & & 1\\
    v_1 & \cdots & v_{i-1} & 1 & v_{i+1} & \cdots & v_n
    \rlap{\qquad\text{$\leftarrow$ row $i$,}}\\
    &&&& 1\\
    &&&&& \ddots\\
    &&&&&& 1
  \end{pmatrix}
\end{equation}
for a vector conveniently written as~$v = \trp{(v_1, \dots, v_{i-1},
  v_{i+1}, \dots, v_n)} \in K^{n-1}$. Similarly, we define the
\emph{eliminant} (= subdiagonal vertical shear)
\begin{equation}
  \label{eq:lower-tri}
  L_i(w) =
  \begin{pmatrix}
    1\\
    &\ddots\\
    &&1\\
    &&&1\\
    &&&w_{i+1} & 1 &&& \rlap{\qquad\text{$\leftarrow$ row $i+1$,}}\\
    &&&\vdots && \ddots\\
    &&&w_n &&& 1
  \end{pmatrix}
\end{equation}
for a vector written as~$w = \trp{(w_{i+1}, \dots, w_n)} \in
K^{n-i}$. When using the abbreviated variables~$(x,y) = (x_1, x_2)$,
we shall also write~$T_x(v)$ and~$T_y(v)$ for~$T_1(v)$ and~$T_2(v)$,
respectively, and similar abbreviations will be in force for the
eliminants. We note also the composition rule~$L_i(w) \,
L_i(\tilde{w}) = L_i(w+\tilde{w})$ so that~$L_i^{-1}(v) = L_i(-v)$ for
eliminants in the same direction. However, for distinct directions we
have the rules
\begin{align}
  \label{eq:lmat-asc}
  L_i(w) \, L_j(u) &=
  \begin{pmatrix}
    1\\
    &\ddots\\
    &&1\\
    &&&1\\
    &&&w_{i+1} & 1 &&&&&\rlap{\qquad\text{$\leftarrow$ row $i+1$}}\\
    &&&\vdots && \ddots\\
    &&&\vdots &&& 1\\
    &&&\vdots &&& u_{j+1} & 1 && \rlap{\qquad\text{$\leftarrow$ row $j+1$}}\\
    &&&\vdots &&& \vdots && \ddots\\
    &&&w_n &&& u_n    &&& 1
  \end{pmatrix}\kern3.5cm\\[1ex]
  \label{eq:lmat-desc}
  L_j(u) \, L_i(w) &= L_i(w') \, L_j(u)
\end{align}
with $i<j$ and~$w' := L_{j-i}(u) \, w \in K^{n-i}$. In particular, we
note that the~$L_i(w)$ and~$L_j(u)$ do not commute.

We can now introduce the key structure that we shall use as an
algebraic model of \emph{multivariate functions under integration and
  substitution}. Roughly speaking, we shall need the subsitution rule
for integration just for the following three matrix types: scalings,
transvections, and eliminants.

\begin{definition}
  \label{def:hier-Rota-Baxter}
  A \emph{Rota-Baxter hierarchy}~$(\galg_n, \cum^{x_n})_{n \in \NN}$ consists of
  a $K$-hierarchy~$(\galg_n)$ and commuting Rota-Baxter operators~$(\cum^{x_n})$
  that satisfy the following axioms:
  \begin{enumerate}[(a)]
  \item\label{it:mapping-prop} We have~$\cum^{x_n} \galg_m \subseteq \galg_m$
    and~$\cum^{x_n} \matfix{K}{m}^* = \matfix{K}{m}^* \, \cum^{x_n}$ for~$n \le m$.
  \item\label{it:ordinary-piece} Every~$(\galg_n, \cum^{x_n})$ is an ordinary
    Rota-Baxter algebra over~$\galg_{n-1}$ with evaluation~$E_n^*$.
  \item\label{it:perm-action} We have~$\tau^* \cum^{x_i} = \cum^{x_j} \tau^*$
    for the transposition~$\tau = (i \: j)$.
  \item\label{it:sub-rule} For $\lambda \in \nonzero{K}$ and~$v = (0,
    v') \in K \oplus K^{n-2}$ we require the substitution rules
    \begin{align}
      \label{eq:scaling-spc}
      &\cum^x \lambda^* = \lambda^{-1} \, \lambda^* \cum^x,\\
      \label{eq:transvec-spc}
      &\cum^x \, T_x(e_1)^* = (1-E_x^*) \, T_x(e_1)^* \cum^x,\\
      \label{eq:vert-subst-spc}
      &\cum^x L_x(e_1+v)^* \cum^x = L_y^{-1}(v')^* 
      \smash{\big[ L_x(e_1)^*, \cum^x \big] \cum^{y\,} \, L_y(v')^*}.
    \end{align}
  \end{enumerate}
  We call~\eqref{eq:scaling-spc}, \eqref{eq:transvec-spc},
  \eqref{eq:vert-subst-spc} the \emph{diagonal}, \emph{horizontal} and
  \emph{vertical} substitution rule, respectively.
\end{definition}

Note that~\eqref{eq:scaling-spc} describes the effect of scaling,
\eqref{eq:transvec-spc} is essentially a one-dimensional substitution
rule while~\eqref{eq:vert-subst-spc} is the significant part of the
two-dimensional substitution rule, which will be used for collapsing
multiple integrals along the same direction. It turns out that all
other instances of the substitution rule needed here can be inferred
from the three
instances~\eqref{eq:scaling-spc}--\eqref{eq:vert-subst-spc}. In
particular, we note immediately that they imply the slightly
\emph{more general cases} of the three substitution rules
\begin{align}
  \label{eq:scaling}
  &\cum^{x_i} \mmt{i}{\lambda} = \lambda^{-1} \, \mmt{i}{\lambda}
  \, \cum^{x_i}\\
  \label{eq:transvec}
  &\cum^{x_j} \, T_j(e_i)^* = (1-E_j^*) \, T_j(e_i)^* \cum^{x_j}\\
  \label{eq:vert-subst-without-mult}
  &\cum^{x_j} L_j(e_i+v)^* \cum^{x_j} = L_{i+1}^{-1}(v')^* 
  \smash{\big[ L_j(e_i)^*, \cum^{x_j} \big] \cum^{x_{i+1}} L_{i+1}(v')^*}
\end{align}
for any~$i, j>0$, as one may readily check using conjugation by
transpositions. In the vertical
rule~\eqref{eq:vert-subst-without-mult} we require~$v = (0, v') \in
K^{i-j+1} \oplus K^{n-i-1}$ and~$j \le i$.

\subsection{Examples and Properties of Rota-Baxter Hierarchies}
\label{ssec:ex-prop-rbh}

The \emph{vertical substitution rule}~\eqref{eq:vert-subst-spc} can be
formulated in the following equivalent way, which is more symmetric
and perhaps more natural (but less economical for our present
purposes). This can be useful for proving that something is a
Rota-Baxter hierarchy (as we will do in
Theorem~\ref{thm:induced-hierarchy}).

\begin{lemma}
  \label{lem:vert-subst-alt}
  Let~$\Lambda \subset \nonzero{\NN}$ be an arbitrary finite index set
  with minimal element~$\lambda \in \Lambda$ and complement~$\Lambda'
  = \Lambda \setminus \{ \lambda \}$. Then
  \begin{equation}
    \label{eq:vert-subst-alt}
    \cum^x L_x\Big( \sum_{i \in \Lambda} e_i \Big)^* \cum^x
    = L_{\lambda+1}^{-1} \Big(\sum_{i \in \Lambda'} e_{i-\lambda}
    \Big)^* \big[ L_x(e_\lambda)^*, \cum^x \big] \,
    \cum^{x_{\lambda+1}} L_{\lambda+1} \Big( \sum_{i \in
      \Lambda'} e_{i-\lambda} \Big)^*
  \end{equation}
  is equivalent to~\eqref{eq:vert-subst-spc}, assuming all other
  axioms of a Rota-Baxter hierarchy~$(\galg_n, \cum^{x_n})_{n \in
    \NN}$.
\end{lemma}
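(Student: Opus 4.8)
The plan is to prove the two implications separately. The point to keep in mind is that \eqref{eq:vert-subst-alt} ranges over $0$-$1$ coefficient vectors $\sum_{i\in\Lambda}e_i$ but over all pivot positions $\lambda=\min\Lambda$, whereas \eqref{eq:vert-subst-spc} allows an arbitrary coefficient vector $v'$ but keeps the pivot fixed at position~$1$. Both directions therefore pass through the common special case of \eqref{eq:vert-subst-spc} for $0$-$1$ vectors with pivot~$1$, and are bridged by two kinds of conjugation: conjugation by transpositions (axiom~\ref{it:perm-action}) shifts the pivot position, while conjugation by diagonal scalings converts between $0$-$1$ and arbitrary coefficients.

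For the forward direction, from \eqref{eq:vert-subst-spc} to \eqref{eq:vert-subst-alt}, I would simply identify \eqref{eq:vert-subst-alt} as the instance $j=1$, $i=\lambda$ of the general vertical rule \eqref{eq:vert-subst-without-mult}, taken with the $0$-$1$ tail $v'=\sum_{i\in\Lambda'}e_{i-\lambda}$; here $v=(0,v')$ does have its first $\lambda$ coordinates zero, since every $i\in\Lambda'$ exceeds $\lambda$. As \eqref{eq:vert-subst-without-mult} is obtained from \eqref{eq:vert-subst-spc} and the remaining axioms by conjugation with transpositions, this implication is immediate.

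The reverse direction, from \eqref{eq:vert-subst-alt} to \eqref{eq:vert-subst-spc}, is where the real work lies. Specializing \eqref{eq:vert-subst-alt} to index sets $\Lambda$ with $\min\Lambda=1$ already yields \eqref{eq:vert-subst-spc} for every $0$-$1$ coefficient vector. To upgrade this to an arbitrary $v'\in K^{n-2}$, I would let the (suitably index-shifted) support of $v'$ play the role of $\Lambda'$, so that \eqref{eq:vert-subst-spc} holds for the $0$-$1$ vector $u$ supported exactly where $v'$ is nonzero, and then conjugate that identity by a diagonal matrix $D=\prod_{c\ge3}\mmt{c}{\mu_c}$ assembled from single-coordinate scalings, with each $\mu_c$ chosen so that the induced row-$c$ rescaling carries the entry $1$ of $u$ to the prescribed entry of $v'$. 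The structural facts that make this work are that each $\mmt{c}{\mu_c}$ with $c\ge3$ lies in $\matfix{K}{2}$, so by axiom~\ref{it:mapping-prop} it commutes with both $\cum^x$ and $\cum^y$; hence conjugation by the invertible $D^*$ is an automorphism that slides past both integrators without producing scaling factors, so it suffices to track each factor. It rescales exactly the subdiagonal entries in rows $\ge3$ of the left-hand eliminant $L_x(e_1+u)$ and of both right-hand factors $L_y(\pm u)$, while fixing $L_x(e_1)$ and therefore the commutator $[L_x(e_1)^*,\cum^x]$, whose sole off-diagonal entry lies in row~$2$.

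I expect the main obstacle to be the bookkeeping in this conjugation: one must verify that a single $D$ rescales the left eliminant $L_x(e_1+u)\mapsto L_x(e_1+v')$ and the right factors $L_y(\pm u)\mapsto L_y(\pm v')$ consistently (the row-$c$ factor being common to all three), that conjugating by $\mmt{c}{\mu_c}$ produces no spurious off-diagonal entries---which holds precisely because $c\ge3$ misses both pivot columns $1$ and $2$---and that coordinates with $v'_q=0$ cause no difficulty, being simply excluded from the support. Granting these routine matrix identities, conjugating the $0$-$1$ instance of \eqref{eq:vert-subst-spc} by $D^*$ delivers \eqref{eq:vert-subst-spc} for the general $v'$, which closes the equivalence.
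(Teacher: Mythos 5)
Your proposal is correct and follows essentially the same route as the paper: the forward direction is the permutation-conjugated special case of the vertical rule with a $0$-$1$ tail (the paper conjugates by $\pi^*$ with $\pi(\lambda)=1$ and then specializes $v'=\sum_{i\in\Lambda'}e_i$), and the reverse direction is exactly the paper's conjugation by the product of scalings $S=S_3\cdots S_n$ with $S_{i+2}=\mmt{i+2}{v'_i}$ on the support of $v'$, using that these lie in $\matfix{K}{2}$ and hence commute with $\cum^x$, $\cum^y$ and $L_x(e_1)^*$. No gaps.
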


\begin{proof}
  It is easy to see that~\eqref{eq:vert-subst-alt} is necessary.
  Indeed, let~$n$ be the largest index of the set~$\Lambda$. We may
  assume that~$\lambda = 1$ since otherwise we can set up a
  permutation~$\pi \in S_n$ such that~$\pi(\lambda) = 1$; then
  conjugation of~\eqref{eq:vert-subst-alt} by~$\pi^*$ will ensure
  this. Now setting~$v' = \sum_{i \in \Lambda'} e_i$
  in~\eqref{eq:vert-subst-spc} immediately yields
  formula~\eqref{eq:vert-subst-alt}. Note that the appearance of
  the~$e_{i-\lambda} = e_{i-1}$ in the latter formula corresponds to
  the transition from~$v \in K^{n-1}$ to~$v' \in K^{n-2}$ in the
  former.

  For proving that~\eqref{eq:vert-subst-alt} is sufficient, let~$v =
  (0,v') \in K \oplus K^{n-2}$ be given, and let~$\Lambda' \subset
  \nonzero{\NN}$ denote the set of all those~$i$ with~$v_i' \neq
  0$. Furthermore, set~$\lambda = 1$ and~$\Lambda = \{1\} \cup
  \Lambda'$. We use the scaling matrix~$S = S_3 \cdots S_n \in K^{n
    \times n}$ with
  \begin{equation*}
    S_{i+2} = 
    \begin{cases}
      \mmt{i+2}{v'_i} & \text{if $v_i \neq 0$,}\\
      I_n& \text{otherwise,}
    \end{cases}
  \end{equation*}
  for~$1 \le i \le n-2$. Then we have~$L_x(e_1 + v) = S \, L_x(\sum_{i
    \in \Lambda} e_i) \, S^{-1}$ and hence
  \begin{equation*}
    L_x(e_1+v)^* = (S_3^{-1})^* \cdots (S_n^{-1})^* \,
    L_x\Big(\sum_{i \in \Lambda} e_i \Big)^* S_3^* \cdots
    S_n^*.
  \end{equation*}
  Since the~$S_j^*, (S_j^{-1})^* \in \matfix{K}{2}^*$ commute
  with~$\cum^x$, substitution into~\eqref{eq:vert-subst-alt} yields
  \begin{align*}
    \cum^x L_x(e_1+v)^* \cum^x &= (S^{-1})^* \, L_y^{-1} \Big(\sum_{i
      \in \Lambda'} e_{i-1} \Big)^* \big[ L_x(e_1)^*, \cum^x \big] \,
    \cum^y L_y \Big( \sum_{i \in \Lambda'} e_{i-1} \Big)^* S^*\\
    &= (S^{-1})^* \, L_y^{-1} \Big(\sum_{i \in \Lambda'} e_{i-1}
    \Big)^* S^* \big[ L_x(e_1)^*, \cum^x \big] \, \cum^y (S^{-1})^* 
    L_y \Big( \sum_{i \in \Lambda'} e_{i-1} \Big)^* S^*,
  \end{align*}
  where in the last step~$S^* (S^{-1})^* = 1$ was inserted
  after~$\cum^y$, and~$S^*$ was pushed left because the~$S_j^*$ also
  commute with~$\cum^y$ and~$L_x(e_1)^*$. Then we
  obtain~\eqref{eq:vert-subst-spc} since clearly~$S \, L_y^{-1}(\sum)
  \, S^{-1} = L_y^{-1}(v')$ as well as~$S \, L_y(\sum) \, S^{-1} =
  L_y(v')$.
\end{proof}

Let us now look at the most important example of a Rota-Baxter
hierarchy---the algebra of \emph{smooth functions in several
  variables}. This hierarchy contains also several important
subhierarchies, in particular the analytic functions.

\begin{example}
  \label{ex:classical}
  In the \emph{classical setting} we have~$\galg_n = C^\infty(\RR^n)$
  with Rota-Baxter operators
  \begin{equation*}
    \cum^{x_n}\colon \galg \to \galg, \qquad
    f \mapsto \int_0^{x_n} f(x_1, \dots, x_{n-1},
    \xi, x_{n+1}, \dots) \, d\xi,
  \end{equation*}
  which clearly satisfy the mapping properties required in
  Item~\eqref{it:mapping-prop} above. Moreover, it is clear that
  every~$\cum^{x_n}$ is injective, and we have~$\galg_n = \galg_{n-1} \dotplus
  \im(\cum^{x_n})$ since
  \begin{equation*}
    f(x_1, \dots, x_n) = f(x_1 \dots, x_{n-1}, 0) + \cum_0^{x_n} f'(x_1, \dots,
    x_{n-1}, \xi) \, d\xi,
  \end{equation*}
  and since~$0 = c(x_1, \dots, x_{n-1}) + \int_0^{x_n} f(x_1, \dots,
  x_{n-1}, \xi) \, d\xi$ implies~$f = 0$ upon differentiating with
  respect to~$x_n$. Thus every~$(\galg_n, \cum^{x_n})$ is an ordinary
  Rota-Baxter algebra over~$\galg_{n-1}$ with evaluation~$E_n^*\colon
  x_n \mapsto 0$, as required by Item~\eqref{it:ordinary-piece}. The
  transposition property of Item~\eqref{it:perm-action} is clear,
  while~\eqref{eq:scaling} follows by the substitution~$\bar{\xi} =
  \lambda \xi$ in the integral~$\int_0^{x_1} f(\lambda \xi, x_2,
  \dots) \, d\xi$.

  Now let us consider~\eqref{eq:transvec-spc}. Writing~$T \equiv
  T_x(e_1)$, we have
  \begin{align*}
    & \cum^{x_1} T^*\! f(x_1, x_2, x_3, \dots) = \int_0^{x_1} f(\xi +
    x_2, x_2, x_3, \dots) \, d\xi =
    \int_{x_2}^{x_1+x_2} f(\bar{\xi}, x_2, x_3, \dots) \, d\bar{\xi}\\
    &= \int_0^{x_1+x_2} f(\xi, x_2, x_3, \dots) \, d\xi - \int_0^{x_2}
    f(\xi, x_2, x_3, \dots) \, d\xi = (1-E_x^*) \, T^* \cum^{x_1}
    f(x_1, x_2, x_3, \dots)
  \end{align*}
  where the second equality employs the change of variables~$\bar{\xi}
  = \xi + x_2$.

  Finally, let us verify~\eqref{eq:vert-subst-spc}. Using the
  abbreviation~$z_{k\dots l} \equiv z_k, \dots, z_l \; (k \le l)$ for
  extracting and manipulating partial vectors with the obvious
  meaning, we have
  \begin{align*}
    &L_2(v')^* \cum^{x_1} \, L_x(e_{1}+v)^* \cum^{x_1} f(x_1, \dots,
    x_n, \dots)\\
    & \quad = L_2(v')^* \int_0^{x_1} \int_0^\eta f(\xi, x_2 + \eta,
    x_{3\dots n} + v_{3\dots n} \eta, \dots) \, d\xi \, d\eta\\
    & \quad = L_2(v')^* \int_0^{x_1} \int_{\xi+x_2}^{x_1+x_2} 
    f(\xi, \bar{\eta}, x_{3\dots n} + v_{3\dots n} (\bar{\eta}-x_2),
    \dots) \,
    d\bar{\eta} \, d\xi\\
    & \quad = \int_0^{x_1} \int_{\xi+x_2}^{x_1+x_2} f(\xi,
    \eta, x_{3\dots n} + v_{3\dots n} \eta, \dots) \, d\eta \, d\xi\\
    & \quad = \int_0^{x_1} \int_0^{x_1+x_2} \ldots \, d\eta \, d\xi -
    \int_0^{x_1} \int_0^{\xi+x_2} \ldots \, d\eta \, d\xi
  \end{align*}
  where in the second equality the integration sweeps are swapped and
  the substitution~$\bar{\eta} = x_2 + \eta$ is applied.  It is easy
  to see that the first summand is~$L_x(e_1)^* \cum^{x_1} \cum^{x_2}
  L_y(v')^* f$ and the second is~$\cum^{x_1} L_x(e_1)^* \cum^{x_2}
  L_y(v')^* f$.

  This concludes the proof that~$C^\infty(\RR^\infty)$ is a
  Rota-Baxter hierarchy over~$\RR$. A similar proof will also work for
  the \emph{analytic subhierarchy}~$C^\omega(\RR^\infty)$. This may be
  extended to complex variables as follows. Let~$\cum^x$ be the path
  integral from~$0$ to~$x \in \CC$. If~$f \in C^\omega(\CC^\infty)$ is
  a multivariate holomorphic function, we use the
  conjugates~$\cum^{x_i} = (1 \: i)^* \cum^x \, (1 \: i)^*$ for
  creating the hierarchy of Rota-Baxter operators. One sees
  immediately that~$C^\omega(\CC^\infty)$ is a Rota-Baxter hierarchy
  over~$\CC$. (There is also an intermediate case where one considers
  only complex-valued $C^\omega$ functions on~$\RR^n$, but allowing
  complex substitutions. For example, $e^{ix} = \cos(x) + i \,
  \sin(x)$ may be interpreted in that way.)
\end{example}

\begin{remark}
  \label{rem:geometric}
  The classical example provides a convenient \emph{geometrical
    interpretation} of the three substitution
  rules~\eqref{eq:scaling-spc}, \eqref{eq:transvec-spc}
  and~\eqref{eq:vert-subst-spc}. The diagonal
  rule~\eqref{eq:scaling-spc} describes the natural contravariant
  behavior when stretching or shrinking a coordinate axis.

  The horizontal rule~\eqref{eq:transvec-spc} says that integrating
  over the segment~$[0,x] \times \{y\}$ after horizontally shearing it
  to~$[y,x+y] \times \{y\}$ can be achieved by integrating over the
  whole sweep~$[0,x+y] \times \{y\}$ minus the surplus~$[0,y] \times
  \{y\}$. Its effect is that all axis-parallel line integrals may be
  started from the axis, generalizing the usual rule~$\cum_a^b \dots
  dx = \cum_0^b \dots dx - \cum_0^a \dots dx$ on the~$x$-axis to all
  parallel segments. 

  The vertical rule~\eqref{eq:vert-subst-spc} for~$v = 0$ may be seen
  to decompose an integral over the triangle $(0,y), (0,x+y), (x,x+y)$
  as an integral over the rectangle~$(0,0), (x,0), (x,x+y), (0,x+y)$
  minus an integral over the complement, namely the trapezoid~$(0,0),
  (x,0), (x,x+y), (0,y)$. Parametrizing the line segment from~$(0,y)$
  to~$(x,x+y)$ as~$\eta = s(\xi)$, the former integral is~$\cum_0^x
  \cum_0^y \dots \, d\eta \, d\xi$, the latter is~$\cum_0^x
  \cum_0^{s(\xi)} \dots \, d\eta \, d\xi$, so the effect is again to
  offset all integrals from the axes. The case~$v \neq 0$ is similar
  except that the triangle is now tilted against the $xy$-plane, but
  the same decomposition as before appears in the orthogonal
  projection.

  Finally, we should point out that the special case~$\lambda = -1$
  of~\eqref{eq:scaling-spc} means that~$(-1)^*$ and~$\cum^x$
  anti-commute. Thinking of Definition~\ref{def:hier-Rota-Baxter} as
  an axiomatization of the substitution rule of integration for the
  case of linear substitutions~$A \in \matmon{\RR}{n}$, the proper
  generalization to general ``spaces'' must be the \emph{signed
    integral} over oriented manifolds rather than the so-called area
  formula of measure
  theory~\cite[\S3.3]{EvansGariepy1992}\cite[Thm.~2.6]{Traynor1993}. The
  latter would introduce absolute values whose algebraic treatment
  would be considerably more awkward.
\end{remark}

Clearly, the notion of Rota-Baxter hierarchy gives rise to a category,
with the morphisms respecting the linear action as well as the
Rota-Baxter operators. In connection with this, several other notions
can be built up in a natural fashion but in the present context we
shall not need them. Let us only mention that a \emph{Rota-Baxter
  subhierarchy} means that corresponding filtered components are
Rota-Baxter subalgebras in the usual sense and that the linear action
of~$\matma$ coincides on them.

We proceed now by stating a few \emph{elementary consequences} of the
axioms. Though most of these are fairly obvious in the classical
setting, we have to prove them here on the basis of our axioms for
making sure that these include all the essential requirements for the
theory to be developed.

\begin{lemma}
  \label{lem:simple-prop}
  Every Rota-Baxter hierarchy~$(\galg_n, \cum^{x_n})$ satisfies the
  following properties.
  \begin{enumerate}[(a)]
  \item\label{it:poly-perm} For any~$\alpha = (\alpha_1, \dots, \alpha_k)$,
    there is an embedding
    \begin{align*}
      \iota_\alpha\colon K[X_{\alpha_1}, \dots, X_{\alpha_k}] & \hookrightarrow
      \galg_\alpha\\
      X_{\alpha_j} & \mapsto x_{\alpha_j} := \cum^{x_{\alpha_j}} 1,
    \end{align*}
    and we have~$\pi^* p(x_{\alpha_1}, \dots, x_{\alpha_k}) =
    p(x_{\pi(\alpha_1)}, \dots, x_{\pi(\alpha_k)})$ for all permutations~$\pi$
    of~$(\alpha_1, \dots, \alpha_k)$.
  \item\label{it:perm-int} For~$\pi \in S_n$ and~$i \le n$ we
    have~$\pi^* \cum^{x_i} = \cum^{x_j} \pi^*$ with~$j := \pi(i)$.  In
    particular, all~$\cum^{x_i}\colon \galg_{(i)} \to \galg_{(i)}$ are
    conjugates of~$\cum^{x_1}$ and hence ordinary Rota-Baxter
    operators.
  \item\label{it:trivial-int} We have~$\cum^{x_n} cf = c \, \cum^{x_n} f$
    for all~$c \in \galg'_{(n)}$ and~$f \in \galg$. In particular,
    $\cum^{x_n} c = cx_n$.
  \item\label{it:poly-rb-emb} The embedding~$\iota_\alpha$ of
    Item~\eqref{it:poly-perm} is a homomorphism of Rota-Baxter algebras in the
    sense that~$\iota_\alpha \circ \cum_0^{X_{\alpha_j}} = \cum^{x_{\alpha_j}}
    \circ \iota_\alpha$ for~$j = 1, \dots, k$.
  \item\label{it:zero-col} If~$M \in \matma$ vanishes in the~$i$-th column,
    then~$M^*(\galg) \subseteq \galg_{i}'$.
  \item\label{it:eval-van} We have~$E_i^* \, \cum^{x_i} = 0$ for all~$i > 0$.
  \end{enumerate}
\end{lemma}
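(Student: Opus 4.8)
The plan is to establish part~\eqref{it:perm-int} first, because conjugation of the Rota--Baxter operators by permutations is the workhorse that reduces every other item either to the ``top index''~$n$ or to the univariate situation. For~\eqref{it:perm-int} I would take the transposition case as given---it is precisely axiom~\eqref{it:perm-action}---and propagate it along a factorization of~$\pi$ into transpositions, tracking the moving index through the anti-multiplicative law $(M\tilde M)^* = \tilde M^* M^*$, to reach the stated identity $\pi^*\cum^{x_i}=\cum^{x_{\pi(i)}}\pi^*$. Specializing to $\tau=(1\:i)$ gives $\cum^{x_i}=\tau^*\cum^{x_1}\tau^*$; since $\tau^*$ is an algebra automorphism carrying $\galg_{(1)}=\galg_1$ onto $\galg_{(i)}$, and conjugation by an algebra isomorphism preserves both injectivity and the decomposition $\im(P)\dotplus R=\galg$, each $\cum^{x_i}$ is an ordinary Rota--Baxter operator on $\galg_{(i)}$.

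The genuine content sits in part~\eqref{it:poly-perm}, namely the algebraic independence of the elements $x_j=\cum^{x_j}1$. Here I would climb the tower $\galg_0\subset\galg_1\subset\cdots$: by axiom~\eqref{it:ordinary-piece} every $(\galg_n,\cum^{x_n})$ is ordinary over $\galg_{n-1}$, so Lemma~\ref{lem:ord-rba-poly} yields an embedding $(\galg_{n-1}[X],\cum_0^X)\hookrightarrow(\galg_n,\cum^{x_n})$ with $X\mapsto\cum^{x_n}1=x_n$; that is, $x_n$ is transcendental over $\galg_{n-1}$. As $x_1,\dots,x_{n-1}\in\galg_{n-1}$, an induction shows $K[x_1,\dots,x_n]$ is a polynomial ring, and restricting to a subset $\{x_{\alpha_1},\dots,x_{\alpha_k}\}$ (with $N=\max_j\alpha_j$) produces the injection $\iota_\alpha$, whose image lies in $\galg_\alpha$ because each $x_{\alpha_j}\in\galg_{(\alpha_j)}\subseteq\galg_\alpha$. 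The permutation formula is then immediate from~\eqref{it:perm-int} and $\pi^*1=1$, since $\pi^*x_{\alpha_j}=\cum^{x_{\pi(\alpha_j)}}\pi^*1=x_{\pi(\alpha_j)}$ extends multiplicatively over the algebra homomorphism $\pi^*$.

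For part~\eqref{it:trivial-int} the task is to upgrade the $\galg_{n-1}$-linearity of the ordinary operator (built into~\eqref{it:ordinary-piece}) to $\galg'_{(n)}$-linearity. Given $c\in\galg'_{(n)}$ and $f$, both lying in some $\galg_m$, I would pick a permutation $\pi$ with $\pi(n)=m$ that carries the support of $c$ into $\{1,\dots,m-1\}$, so that $\pi^*c\in\galg_{m-1}$; conjugating by $\pi^*$ via~\eqref{it:perm-int} and applying the $\galg_{m-1}$-linearity of $\cum^{x_m}$ then transports back to $\cum^{x_n}(cf)=c\,\cum^{x_n}f$, with $\cum^{x_n}c=cx_n$ on setting $f=1$. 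Part~\eqref{it:poly-rb-emb} follows by combining~\eqref{it:poly-perm}, \eqref{it:trivial-int} and the single-variable power rule $\cum^{x_{\alpha_j}}x_{\alpha_j}^m=x_{\alpha_j}^{m+1}/(m+1)$ (the image under Lemma~\ref{lem:ord-rba-poly} of $\cum_0^{X_{\alpha_j}}X_{\alpha_j}^m$): since $\cum^{x_{\alpha_j}}$ treats the remaining $x_{\alpha_i}$ as scalars by~\eqref{it:trivial-int}, it acts on $\iota_\alpha(K[X_{\alpha_1},\dots,X_{\alpha_k}])$ exactly as $\cum_0^{X_{\alpha_j}}$, and $K$-linearity finishes it.

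Finally, for part~\eqref{it:zero-col} the vanishing $i$-th column gives $Me_i=0$, hence $ME_i=M$ and, by anti-multiplicativity, $M^*=E_i^*M^*$; it therefore suffices to prove $E_i^*(\galg)\subseteq\galg_i'$. For $f\in\galg_n$ with $i\le n$ I would conjugate $E_i$ to the evaluation matrix via $\tau E_i\tau^{-1}=E_{\tau(i)}$ with $\tau=(i\:n)$ and invoke the hierarchy axiom $E_n^*(\galg_n)\subseteq\galg_{n-1}$ from Definition~\ref{def:hierarchy} together with monotonicity of $\alpha\mapsto\galg_\alpha$; the case $i>n$ is disposed of by straightness. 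Part~\eqref{it:eval-van} is the assertion that the evaluation $E_i^*$ of the ordinary algebra $(\galg_i,\cum^{x_i})$ annihilates $\im(\cum^{x_i})$, which is exactly the projector property in~\eqref{it:ordinary-piece}; for $f\in\galg_m$ with $m>i$ I would again conjugate $x_i$ to the top index by $\tau=(i\:m)$, reducing to $E_m^*\cum^{x_m}=0$ on $\galg_m$. I expect the algebraic-independence step in~\eqref{it:poly-perm} to be the only real obstacle: everything else is formal bookkeeping with the action, whereas there the ordinariness of the entire Rota--Baxter tower is genuinely needed, through the iterated use of Lemma~\ref{lem:ord-rba-poly}.
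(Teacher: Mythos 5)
Your proposal is correct and follows essentially the same route as the paper's proof: part~\eqref{it:perm-int} first from the transposition axiom, the tower induction via Lemma~\ref{lem:ord-rba-poly} for the algebraic independence in~\eqref{it:poly-perm}, and conjugation by suitable permutations to reduce \eqref{it:trivial-int}, \eqref{it:zero-col} and \eqref{it:eval-van} to the top-index axioms. The only cosmetic differences are that the paper handles~\eqref{it:poly-rb-emb} by a finite induction on the integration index rather than your direct ``pull out the other variables and apply the power rule'' argument, and in~\eqref{it:zero-col} it works with $M\tau=(M\tau)E_n$ directly instead of first reducing to $M^*=E_i^*M^*$ --- both of which are equivalent in substance.
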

\begin{proof}
  \eqref{it:poly-perm} It suffices to consider~$\alpha = (1, \dots, n)$ since
  restricting to a subset of~$\alpha$ induces another embedding, and
  permutations may be restricted accordingly (fixing the elements outside the
  subset).

  By definition, $\iota_n := \iota_\alpha$ is a homomorphism of
  $K$-algebras. We show that it is injective by induction on~$n$, with
  the base case~$n=1$ covered by
  Lemma~\ref{lem:ord-rba-poly}. Since~$\galg_\alpha = \galg_n$ is an
  ordinary Rota-Baxter algebra over~$\galg_{n-1}$, applying
  Lemma~\ref{lem:ord-rba-poly} again yields an embedding~$\iota\colon
  (\galg_{n-1}[X_n], \cum_0^{X_n}) \hookrightarrow (\galg_n,
  \cum^{x_n})$ defined by~$X_n \mapsto x_n$. By the induction
  hypothesis we also have an embedding~$\iota_{n-1}\colon K[X_1,
  \dots, X_{n-1}] \hookrightarrow \galg_{n-1}$ defined by~$X_i \mapsto
  x_i \; (i < n)$. By the universal property of
  polynomials~\cite[Lem.~2.15]{BeckerWeispfenning1993}, we obtain a
  unique $K$-linear map~$\iota_n\colon K[X_1, \dots, X_{n-1}][X_n] \to
  \galg_n$, which acts on coefficients via~$\iota_{n-1}$ and
  sends~$X_n$ to~$x_n$. By~\cite[Lem.~2.16]{BeckerWeispfenning1993}
  and the injectivity of~$\iota_{n-1}$, the map~$\iota_n$ is injective
  iff
  \begin{equation}
    \label{eq:lift-inj}
    \sum_{i=0}^k p_i(x_1, \dots, x_{n-1}) \, x_n^i = 0
    \quad\text{implies}\quad
    p_0, \dots, p_k = 0.
  \end{equation}
  For showing~\eqref{eq:lift-inj} note that the left-hand side is the
  image of~$\sum_i \iota_{n-1}(p_i) \, X_n^i$ under the
  embedding~$\iota$, so that~$p_0, \dots, p_k = 0$ follows from the
  injectivity of~$\iota$ and~$\iota_{n-1}$.

  Now let~$\pi \in S_n$ be an arbitrary permutation. Then
  Item~\eqref{it:perm-int} of this Lemma (whose proof below is
  independent) yields
  \begin{equation*}
    \pi^* p(x_1, \dots, x_n) = p(\pi^* \cum^{x_1} 1, \dots, \pi^* \cum^{x_n} 1)
    = p(\cum^{x_{\pi(1)}} 1, \dots, \cum^{x_{\pi(n)}} 1)
    = p(x_{\pi(1)}, \dots, x_{\pi(n)})
  \end{equation*}
  since clearly~$\pi^* 1 = 1$.

  \eqref{it:perm-int} The general conjugate relation follows from
  Item~\eqref{it:perm-action} Definition~\ref{def:hier-Rota-Baxter}
  since every~$\pi \in S_n$ is a product of transpositions. The last
  claim follows since~$\cum^{x_i} = \tau^* \cum^{x_1} \tau^*$
  with~$\tau=(1 \: i)$ is clearly injective and for given~$f \in
  \galg_{(i)}$ we have~$\tau^* f = f_0 + \cum^{x_1} f_1$ for some~$f_0
  \in K$ and~$f_1 \in \galg_{(1)}$, which implies~$f = f_0 +
  \cum^{x_i} f_i$ for~$f_i = \tau^* f_1 \in \galg_{(i)}$, and the
  decomposition is unique since the one for~$\galg_{(1)} = \galg_1$
  is.

  \eqref{it:trivial-int} For a sufficiently large~$k \ge n$ we have~$f
  \in \galg_k$ and~$c \in \galg_{(\alpha)}$ with~$\alpha := (1, \dots,
  k) \setminus (n)$. So if~$\pi$ is any permutation taking~$\alpha_i$
  to~$i$, we have~$\pi^* c \in \galg_{k-1}$. We choose a permutation
  with~$\pi(n) = k$ so that~$\pi^* f \in \galg_k$. Then
  Item~\eqref{it:ordinary-piece} of
  Definition~\ref{def:hier-Rota-Baxter} yields~$\cum^{x_k} (\pi^* c
  \cdot \pi^* \! f) = (\pi^* c) \, \cum^{x_k} \pi^* f$. If~$\tilde{\pi}$
  is the inverse of~$\pi$, left multipication by~$\tilde{\pi}^*$
  together with Item~\eqref{it:perm-int} of this Lemma gives the
  required identity.

  \eqref{it:poly-rb-emb} Again it suffices to consider~$\alpha = (1,
  \dots, n)$, so we show that~$\iota_n \circ \cum_0^{X_i} = \cum^{x_i}
  \circ \iota_n$ for fixed~$n$. In the case~$i < n$ we use finite
  induction on~$i$. The induction base~$i=1$ is then covered by
  Lemma~\ref{lem:ord-rba-poly} and Item~\eqref{it:trivial-int} above,
  so assume~$i>1$. Then we have
  \begin{align*}
    &\iota_n \circ \int_0^{X_i} \bigg( \sum_{j=0}^k p_j(X_1, \dots,
    X_{n-1}) \, X_n^j \bigg) = \iota_n \bigg( \sum_{j=0}^k X_n^j
    \int_0^{X_i} p_j(X_1, \dots, X_{n-1})
    \bigg)\\
    &\qquad = \sum_{j=0}^k x_n^j \, \iota_{n-1} \bigg( \int_0^{X_i}
    p_j(X_1, \dots, X_{n-1}) \bigg) = \sum_{j=0}^k x_n^j \, \int^{x_i}
    p_j(x_1, \dots,
    x_{n-1})\\
    &\qquad = \int^{x_i} \sum_{j=0}^k p_j(x_1, \dots, x_{n-1}) \,
    x_n^j = \int^{x_i} \circ \, \iota_n \, \bigg( \sum_{j=0}^k
    p_j(X_1, \dots, X_{n-1}) \, X_n^j \bigg)
  \end{align*}
  where we have used the induction hypothesis in the third and
  Item~\eqref{it:trivial-int} of this Lemma in the fourth equality. It
  remains to prove~$\iota_n \circ \cum_0^{X_n} = \cum^{x_n} \circ
  \iota_n$ on~$K[X_1, \dots, X_n] = K[X_1, \dots, X_{n-1}][X_n]$. To
  this end, recall that the embedding~$\iota\colon (\galg_{n-1}[X_n],
  \cum_0^{X_n}) \hookrightarrow (\galg_n, \cum^{x_n})$ from the above
  proof of Item~\eqref{it:poly-perm} is a Rota-Baxter homomorphism by
  Lemma~\ref{lem:ord-rba-poly}. Hence we obtain
  \begin{align*}
    &\iota_n \circ \int_0^{X_n} \bigg( \sum_{j=0}^k p_j(X_1, \dots,
    X_{n-1}) \, X_n^j \bigg)
    = \iota_n \bigg( \sum_{j=0}^k p_j(X_1, \dots, X_{n-1}) \, X_n^{j+1}/(j+1) \bigg)\\
    &\qquad = \sum_{j=0}^k p_j(x_1, \dots, x_{n-1}) \, x_n^{j+1}/(j+1)
    = \iota \circ \int_0^{X_n} \bigg( \sum_{j=0}^k
    p_j(x_1, \dots, x_{n-1}) X_n^j \bigg)\\
    &\qquad = \int^{x_n} \circ \, \iota \, \bigg( \sum_{j=0}^k
    p_j(x_1, \dots, x_{n-1}) X_n^j \bigg) = \int^{x_n} \circ \, \iota_n
    \, \bigg( \sum_{j=0}^k p_j(X_1, \dots, X_{n-1}) X_n^j \bigg)
  \end{align*}
  where the Rota-Baxter property of~$\iota$ has been employed in the
  fourth equality.

  \eqref{it:zero-col} We may assume~$M \in \matmon{K}{n}$ and~$f \in
  \galg_n$ for a sufficiently large~$n \ge i$. Setting~$\tau = (i \:
  n) \in S_n$, we see that~$M\tau$ has the last column zero so
  that~$M\tau = (M\tau) \, E_n$. Hence we have also~$\tau^* M^* \! f =
  E_n^* \, (M\tau)^* f \in \galg_{n-1}$ by the evaluation property of
  hierarchies in Definition~\ref{def:hierarchy}. Using the cyclic
  permutation
  \begin{equation*}
    \gamma = (i \mapsto i+1 \mapsto \cdots \mapsto n-1 \mapsto
    i) \in S_{n-1},
  \end{equation*}
  we see that~$\tau\gamma$ is a permutation that sends~$k$
  to~$\alpha_k$ for~$\alpha = (1, \dots, i-1, i+1, \dots, n)$, so by
  the definition of the dependency lattice we may infer~$M^* \! f \in
  \galg_\alpha \subseteq \galg_{(i)}'$ from~$(\tau\gamma)^* M^* f =
  \gamma^* \tau^* M^* f \in \galg_{n-1}$. But the latter follows
  immediately from~$\tau^* M^* \! f \in \galg_{n-1}$ since~$\gamma \in
  S_{n-1}$.

  \eqref{it:eval-van} Take~$f \in \galg_n$. If~$n \le i$ then~$f \in
  \galg_i$ and hence~$E_i^* \cum^{x_i} f = 0$ follows from
  Item~\eqref{it:ordinary-piece} of
  Definition~\ref{def:hier-Rota-Baxter}. Otherwise let~$\tau$ be the
  transposition~$(n \; i)$ so that~$E_n^* \, \cum^{x_n} \tau^* f = 0$
  by the same token. Composing this on the left by~$\tau^*$
  gives~$E_i^* \cum^{x_i} f = 0$ by Item~\eqref{it:perm-int} of this
  Lemma since~$E_i = \tau E_n \tau$.
\end{proof}

The horizontal substitution rule~\eqref{eq:transvec} can be
generalized to the following result about arbitrary
\emph{transvections}.

\begin{lemma}
  \label{lem:gen-transvec}
  Let~$(\galg_n, \cum^{x_n})_{n \in \NN}$ be a Rota-Baxter hierarchy
  over a field~$K$. If~$T = T_i(v)$ is any transvection along
  the~$x_i$-axis with~$(v_1, \dots, v_{i-1}, v_{i+1}, \dots, v_n) \in
  K_{n-1}$, then we have~$\cum^{x_i} \, T^* = (1-E_i^*) \, T^*
  \cum^{x_i}$.
\end{lemma}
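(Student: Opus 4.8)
Let me understand the statement. We need to prove that for an arbitrary transvection $T_i(v)$ along the $x_i$-axis, the horizontal substitution rule $\cum^{x_i} T^* = (1-E_i^*) T^* \cum^{x_i}$ holds. The axiom gives us this for the special case $T_x(e_1)$ (the transvection in the $x_1$-direction with $v = e_1$), and equation \eqref{eq:transvec} generalizes this to $T_j(e_i)$ (transvection with single unit vector entry). The goal here is to go from single unit-vector shears to arbitrary vectors $v$.

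The plan is to reduce an arbitrary transvection to elementary ones and then induct. The starting point is the multiplicativity of same-direction transvections: writing $v = \sum_{k \neq i} v_k e_k$, one has $T_i(v) = \prod_{k \neq i} T_i(v_k e_k)$, where the factors commute because $e_{ik} e_{il} = 0$ for $k, l \neq i$; hence in the contravariant action $T_i(v)^* = \prod_k T_i(v_k e_k)^*$ (the order being immaterial). I would prove the asserted identity by induction on the number of nonzero components of $v$. The base case $v = 0$ is $T = I$, where the claim reduces to $\cum^{x_i} = (1 - E_i^*)\cum^{x_i}$, i.e.\ $E_i^* \cum^{x_i} = 0$, which is exactly Lemma~\ref{lem:simple-prop}\eqref{it:eval-van}. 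It is convenient to first conjugate by the transposition $\tau = (1\;i)$---using $\tau^* \cum^{x_1} = \cum^{x_i}\tau^*$ from Lemma~\ref{lem:simple-prop}\eqref{it:perm-int} together with $\tau E_1 \tau = E_i$ and $\tau T_1(\cdot)\tau = T_i(\cdot)$ (with the components of $v$ suitably relabeled)---so that it suffices to treat the case $i = 1$, where every transverse direction satisfies $k > 1$.

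For the single-component case $v = v_k e_k$ (with $k \neq 1$ and $v_k \neq 0$), the idea is to manufacture the scalar $v_k$ out of the unit-vector rule~\eqref{eq:transvec} by conjugating with a scaling matrix. A direct matrix computation gives $T_1(v_k e_k) = \mmt{k}{v_k^{-1}}\, T_1(e_k)\, \mmt{k}{v_k}$, hence $T_1(v_k e_k)^* = \mmt{k}{v_k}^*\, T_1(e_k)^*\, \mmt{k}{v_k^{-1}}^*$. Since $k > 1$, the scaling $\mmt{k}{v_k}$ lies in $\matfix{K}{k-1}$, so the mapping property of Definition~\ref{def:hier-Rota-Baxter}\eqref{it:mapping-prop} lets $\cum^{x_1}$ pass through both $\mmt{k}{v_k}^*$ and $\mmt{k}{v_k^{-1}}^*$; applying~\eqref{eq:transvec} in between then yields $\cum^{x_1} T_1(v_k e_k)^* = \mmt{k}{v_k}^*(1 - E_1^*)T_1(e_k)^*\mmt{k}{v_k^{-1}}^* \cum^{x_1}$. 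Because the evaluation matrix $E_1$ and the scaling $\mmt{k}{v_k}$ are both diagonal they commute, so $\mmt{k}{v_k}^*$ commutes with $E_1^*$, and the right-hand side collapses to $(1 - E_1^*)T_1(v_k e_k)^*\cum^{x_1}$, as required.

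For the inductive step I would peel off one factor: write $v = w + v_m e_m$ with $w$ having one fewer nonzero component, so that $T_1(v)^* = T_1(v_m e_m)^* T_1(w)^*$. Applying the single-component case and then the induction hypothesis gives $\cum^{x_1}T_1(v)^* = (1 - E_1^*)T_1(v_m e_m)^*(1 - E_1^*)T_1(w)^*\cum^{x_1}$. The crux is that the intervening projector disappears: the matrix identity $E_1\, T_1(v_m e_m) = E_1$ (evaluating $x_1$ at $0$ annihilates the added multiple of $x_m$) gives $T_1(v_m e_m)^* E_1^* = E_1^*$, and combined with the idempotency $E_1^* E_1^* = E_1^*$ this forces $(1 - E_1^*)T_1(v_m e_m)^* E_1^* = 0$. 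Hence $(1 - E_1^*)T_1(v_m e_m)^*(1 - E_1^*) = (1 - E_1^*)T_1(v_m e_m)^*$, and the expression simplifies to $(1 - E_1^*)T_1(v)^*\cum^{x_1}$, completing the induction. I expect this collapse of the nested evaluation projectors to be the only genuinely delicate point; the remaining ingredients---multiplicativity of shears, the scaling conjugation, and commuting $\cum^{x_1}$ past transverse scalings---are routine bookkeeping with the contravariant action.
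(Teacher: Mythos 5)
Your proof is correct and follows essentially the same route as the paper's: reduce to $i=1$ by conjugating with $(1\;i)$, factor $T_1(v)$ into elementary shears $T_1(v_ke_k)$, obtain each elementary case from the axiom~\eqref{eq:transvec} by conjugating with the scaling $\mmt{k}{v_k}$ (which commutes with $\cum^{x_1}$ and with $E_1^*$), and dispose of the trivial case via $E_1^*\cum^{x_1}=0$. The only difference is that you explicitly justify the reduction from the product of elementary shears to a single one via the identity $(1-E_1^*)\,T_1(v_me_m)^*\,(1-E_1^*)=(1-E_1^*)\,T_1(v_me_m)^*$, a step the paper compresses into ``it suffices to consider''; your version is the more complete one.
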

\begin{proof}
  The general result follows from the case~$i=1$ by conjugation
  with~$(1 \: i) \in S_n$ and Item~\eqref{it:perm-action} of
  Definition~\ref{def:hier-Rota-Baxter}. Hence we may
  assume~$i=1$. If~$T_v$ denotes the
  transvection~\eqref{eq:gen-transvec} corresponding to~$v = (v_2,
  \dots, v_n) \in K_{n-1}$, we have the obvious relation~$T_v =
  T_{v_2e_2} \cdots T_{v_ne_n}$ for the composition, so it suffices to
  consider~$T_a := T_v$ for~$v = a e_j \; (j > 1)$. We may furthermore
  assume that~$a \neq 0$ since otherwise~$T_a = I_n$, and the result
  follows by Item~\eqref{it:eval-van} in
  Lemma~\ref{lem:simple-prop}. If~$S = \mmt{j}{a}$ denotes the
  corresponding scaling in~$x_j$-direction with inverse~$\tilde{S} =
  \mmt{j}{1/a}$, we have~$T_a = \tilde{S} T_1 S$ and hence
  \begin{align*}
    \cum^{x_1} \, T_a^* &= \cum^{x_1} S^* T_1^* \tilde{S}^* = S^* \cum^{x_1} T_1^*
    \tilde{S}^* = S^* (1-E_1^*) T_1^* \cum^{x_1} \tilde{S}^* = S^* (1-E_1^*) T_1^*
    \tilde{S}^* \cum^{x_1},\\
    &= (1 - E_1^*)\, T_a^* \, \cum^{x_1}
  \end{align*}
  where the second and fourth equality follows from Item~\eqref{it:mapping-prop}
  of Definition~\ref{def:hier-Rota-Baxter}, the third from~\eqref{eq:transvec},
  and the last from~$E_1 S = S E_1$.
\end{proof}

The vertical substitution rule can also be generalized in a way that
will become important in the next secion---allowing a coefficient
function within the integral. For handling this kind of situation we
use the technique of \emph{slack variables}. Before going through the
formal proof, it may be helpful to recall this technique from
Analysis. In fact, the verification of the vertical substitution rule
in Example~\ref{ex:classical} can be generalized as follows:
\begin{align*}
  &L_2(v')^* \cum^{x_1} g(x_1) \, L_x(e_{1}+v)^*
  \cum^{x_1} f(x_1, \dots,
  x_n)\\
  & \quad = L_2(v')^* \int_0^{x_1} g(\eta) \int_0^\eta f(\xi, x_2 +
  \eta, x_{3\dots n} + v_{3\dots n} \eta) \, d\xi \, d\eta\\
  & \quad = L_2(v')^* \int_0^{x_1} \int_{\xi+x_2}^{x_1+x_2}
  g(\bar{\eta}-x_2) \, f(\xi, \bar{\eta}, x_{3\dots n} + v_{3\dots n}
  (\bar{\eta}-x_2)) \, d\bar{\eta} \, d\xi\\
  & \quad = \int_0^{x_1} \int_{\xi+x_2}^{x_1+x_2}
  \bar{g}(x_1, \eta, x_3, \dots, x_n, x_2) \,
  f(\xi, \eta, x_{3\dots n} + v_{3\dots n} \eta) \, d\eta \,
  d\xi\\
  & \quad = \int_0^{x_1} \int_0^{x_1+x_2} \ldots \, d\eta \, d\xi -
  \int_0^{x_1} \int_0^{\xi+x_2} \ldots \, d\eta \, d\xi
\end{align*}
The auxiliary function~$\bar{g} \in \galg_{n+1}$ is defined
as~$\bar{g}(x_1, x_2, \dots, x_n, x_{n+1}) := g(x_2 - x_{n+1})$. Since
the substitution induced by~$J := I_n \oplus e_2$ acts as~$x_{n+1}
\mapsto x_2$, it is now easy to check that the first summand is given
by~$J^* L_x(e_1)^* \cum^{x_1} \cum^{x_2} \bar{g} \, L_y(v')^* f$ and
the second by~$J^* \cum^{x_1} L_x(e_1)^* \cum^{x_2} \bar{g} \,
L_y(v')^* f$. Thus we recover the same rule as before, apart from the
presence of the multipliers~$g, \bar{g}$ and the back substitution
effected by~$J^*$. The whole point is that the slack substitution~$x_2
\mapsto x_{n+1}$ allows us to temporarily ``freeze'' the
variable~$x_2$ so that it is not affected by integration. However,
there is a price to pay for this: The identity is now restricted
to~$\galg_n$ rather than being valid over all~$\galg$. This is
reflected by the change from~$f(x_1, \dots, x_n, \dots)$ in
Example~\ref{ex:classical} to~$f(x_1, \dots, x_n)$ in the above
verification. Practically speaking, this is not an essential
restriction: The slack variable~$x_{n+1}$ must be chosen large enough
to prevent any conflict with the substitutions or integrals. Let us
now prove the same result for general Rota-Baxter hierarchies.

\begin{lemma}
  \label{lem:slack}
  Let~$(\galg_n, \cum^{x_n})_{n \in \NN}$ be a Rota-Baxter hierarchy
  over a field~$K$, and let~$g \in \galg_1$ be any coefficient
  function. Then on~$\galg_n$ we have the identity
  \begin{equation}
    \label{eq:vert-subst}
    \cum^{x_j} g \, L_j(e_i+v)^* \cum^{x_j} = L_{i+1}^{-1}(v')^* (I_n \oplus e_{i+1})^*
    \smash{\big[ L_j(e_i)^*, \cum^{x_j} \big] \cum^{x_{i+1}}
      \bar{g} \, L_{i+1}(v')^*}
  \end{equation}
  for any~$0 < j \le i$ and~$v = (0, v') \in K^{i-j+1} \oplus
  K^{n-i-1}$. Here~$\bar{g} := (e_{i+1} - e_{n+1})^* g \in
  \galg_{n+1}$ is the transform of the coefficient function.
\end{lemma}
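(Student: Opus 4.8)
The plan is to reduce to the base case $i=j=1$ and there to translate the analytic slack computation preceding the statement into axiomatic form. First I would reduce to $i=j=1$, exactly as~\eqref{eq:vert-subst-without-mult} is obtained from~\eqref{eq:vert-subst-spc}. Conjugating by a suitable permutation $\pi\in S_n$ that fixes the slack index $n+1$ transports the base identity to the general one: by Item~\eqref{it:perm-int} of Lemma~\ref{lem:simple-prop} together with Item~\eqref{it:perm-action} of Definition~\ref{def:hier-Rota-Baxter}, it turns $\cum^{x_1}$ into $\cum^{x_j}$ and $\cum^{x_2}$ into $\cum^{x_{i+1}}$; conjugation of the defining matrix units sends $L_x(e_1)$ to $L_j(e_i)$ and $L_2(v')$ to $L_{i+1}(v')$; and it carries the coefficient and its slack transform $\bar{g}=(e_{i+1}-e_{n+1})^{*}g$ along consistently, the fixing of $n+1$ being precisely what keeps the slack variable fresh. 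So it remains to settle $i=j=1$.

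For $i=j=1$ the claim reads $\cum^{x_1}g\,L_x(e_1+v)^{*}\cum^{x_1}=L_2^{-1}(v')^{*}(I_n\oplus e_2)^{*}\,[L_x(e_1)^{*},\cum^{x_1}]\,\cum^{x_2}\bar{g}\,L_2(v')^{*}$ on $\galg_n$, with $\bar{g}=(e_2-e_{n+1})^{*}g$. I would first left-multiply by $L_2(v')^{*}$ and cancel $L_2(v')^{*}L_2^{-1}(v')^{*}=\id$ (by contravariance and $L_2(w)L_2(\tilde w)=L_2(w+\tilde w)$), reducing to the more symmetric identity $L_2(v')^{*}\cum^{x_1}g\,L_x(e_1+v)^{*}\cum^{x_1}=(I_n\oplus e_2)^{*}[L_x(e_1)^{*},\cum^{x_1}]\cum^{x_2}\bar{g}\,L_2(v')^{*}$; the original then follows by left-multiplying back with $L_2^{-1}(v')^{*}$. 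The engine is the vertical rule~\eqref{eq:vert-subst-spc} itself, which already encodes the analytic step of swapping the two integration sweeps and substituting $\bar\eta=x_2+\eta$, while the commutator $[L_x(e_1)^{*},\cum^{x_1}]$ is the algebraic shadow of the splitting $\int_{\xi+x_2}^{x_1+x_2}=\int_0^{x_1+x_2}-\int_0^{\xi+x_2}$ seen in Example~\ref{ex:classical}. The coefficient is carried through by replacing $g$, which sits on the variable $x_1$ that the outer integrator is about to consume, by its lift $\bar{g}$ on the pair $(x_2,x_{n+1})$: the fresh variable $x_{n+1}$ freezes the value that would otherwise be swept by $\cum^{x_2}$, and the back-substitution $(I_n\oplus e_2)^{*}$, which sets $x_{n+1}\mapsto x_2$, restores it at the very end. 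The remaining bookkeeping uses Item~\eqref{it:mapping-prop} of Definition~\ref{def:hier-Rota-Baxter} to commute $\cum^{x_1}$ and $\cum^{x_2}$ past those factors in $\matfix{K}{m}^{*}$ that do not involve their own direction, and Item~\eqref{it:trivial-int} of Lemma~\ref{lem:simple-prop} to pull the frozen coefficient through the integrators.

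The hard part is this slack bookkeeping: checking that collapsing $x_{n+1}\mapsto x_2$ through $(I_n\oplus e_2)^{*}$, after the vertical rule has been applied, reproduces exactly the coefficient that the outer integral sweeps in the analytic picture. The delicate point is that $(I_n\oplus e_2)^{*}$ does not commute with $L_x(e_1)^{*}$: the matrices $(I_n\oplus e_2)\,L_x(e_1)$ and $L_x(e_1)\,(I_n\oplus e_2)$ differ by a single matrix unit in the slack row, so the collapse must be performed in the correct position relative to the shear and to the two integrators, and one must verify that the stray contribution does not survive. This is also exactly where the restriction to $\galg_n$ enters, since the argument needs $x_{n+1}$ to be a genuinely fresh variable. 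I expect these composed-substitution matrix identities, together with the attendant commutations, to be the only real work; everything else is the vertical rule plus routine use of the simple properties.
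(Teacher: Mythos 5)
Your strategic outline coincides with the paper's: reduce to $i=j=1$ by conjugation, introduce a slack variable to protect the coefficient, invoke the vertical rule, and collapse the slack variable at the end. But the decisive construction is missing, and without it the plan cannot be executed. The vertical rule~\eqref{eq:vert-subst-spc} has no coefficient slot: it reads $\cum^x L_x(e_1+v)^*\cum^x = \cdots$, with nothing between the outer integrator and the substitution. Your expression $\cum^{x_1} g\, L_x(e_1+v)^*\cum^{x_1}$ has $g=g(x_1)$ sitting exactly there, depending on the variable the outer integrator consumes, so Item~\eqref{it:trivial-int} of Lemma~\ref{lem:simple-prop} does not apply to it and the vertical rule cannot be reached. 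The paper's proof resolves this by the factorization $g = E_{n+1}^*\, L_x(-e_n)^*\, \tilde g$ with $\tilde g := (-e_{n+1})^* g \in \galg_{(n+1)}$ (so that $g(x_1)$ is realized as $g(x_1-x_{n+1})$ evaluated at $x_{n+1}=0$). The point is twofold: $\tilde g$ depends only on the slack variable, hence commutes with both integrators by Item~\eqref{it:trivial-int}; and $L_x(-e_n)$ is an eliminant in the same direction as $L_x(e_1+v)$, so it merges with it via $L_i(w)L_i(\tilde w)=L_i(w+\tilde w)$ to give $L_x(e_1+v-e_n)^*$. Only then can~\eqref{eq:vert-subst-spc} be applied, with $v'-e_n$ in place of $v'$, and the stated right-hand side drops out from $L_y^{-1}(v'-e_n)\,E_{n+1}=(I_n\oplus e_2)\,L_y^{-1}(v')$ and $L_y(v'-e_n)^*\tilde g\, u=\bar g\, L_y(v')^* u$. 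Your proposal gestures at ``replacing $g$ by its lift $\bar g$'' but never says how $g(x_1)$ is to be detached from the outer integration variable; note also that $\bar g=(e_2-e_{n+1})^*g$ still depends on $x_2$, so it is not itself the frozen intermediate object --- that role is played by $\tilde g$, which depends on $x_{n+1}$ alone.

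You also locate the difficulty in the wrong place. You worry that $(I_n\oplus e_2)^*$ fails to commute with $L_x(e_1)^*$ and that a ``stray contribution'' must be shown to vanish; in the actual argument no such commutation is ever needed, because $(I_n\oplus e_2)$ only appears at the final step from the identity $L_y^{-1}(v'-e_n)\,E_{n+1}=(I_n\oplus e_2)\,L_y^{-1}(v')$, already to the left of the commutator $[L_x(e_1)^*,\cum^x]$ produced by the vertical rule. The restriction to $\galg_n$ enters not through freshness of $x_{n+1}$ in your sense but through the straightness arguments needed to assert that $\cum^x u$ and the various factors are invariant under $E_{n+1}^*$ and $L_x(-e_n)^*$. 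In short: the reduction to the base case and the overall shape are right, but the core of the proof --- the slack factorization of the coefficient and the merging of eliminants that makes the vertical rule applicable --- is exactly the part you have deferred as ``the only real work,'' and it is not routine bookkeeping.
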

\begin{proof}
  As before it will suffice to prove the case~$i=j=1$ since the
  general case can then be recovered via conjugation by
  transpositions. Let~$u \in \galg_n$ be arbitrary but fixed. Since~$g
  \in \galg_1$, it is invariant under~$L_x(e_1+v)^*$ by the
  straightness of the action. Moroever, setting~$\tilde{g} :=
  (-e_{n+1})^* g \in \galg_{(n+1)}$ we have~$g = E_{n+1}^* L_x(-e_n)^*
  \tilde{g}$, by straightness again since the first row of the
  matrix~$\big(-e_{n+1} \oplus (0 \: I_n)\big) \, L_x(-e_n) \, E_{n+1}
  \in K^{(n+1) \times (n+1)}$ is~$e_1$. Using this factorization we
  obtain
  \begin{align*}
    \cum^x g \, L_x(e_1+v)^* \cum^x u &= \cum^x L_x(e_1+v)^*
    E_{n+1}^* L_x(-e_n)^* \tilde{g} \cum^x u\\
    &= \cum^x E_{n+1}^* \, L_x(e_1+v-e_n)^* \tilde{g} \, \cum^x u,
  \end{align*}
  where the first equality uses the multiplicativity of substitutions
  and the fact that~$\cum^x u \in \galg_n$ is invariant
  under~$E_{n+1}^*$ and~$L_x(-e_n)^*$ by straightness once again; the
  second equality follows from~$[L_x(e_1+v), E_{n+1}] = 0$
  and~\eqref{eq:lmat-asc}, where~$L_x(e_1+v) \in K^{n \times n}$ is
  embedded into~$K^{(n+1) \times (n+1)}$ via the filtration
  of~$\substmon{K}$. Since~$\tilde{g} \in \galg_{(n+1)}$, we may move it
  in and out of the inner~$\cum^x$ by Item~\eqref{it:trivial-int} of
  Lemma~\ref{lem:simple-prop}, while~$E_{n+1} \in \matfix{K}{n}$ and
  Item~\eqref{it:mapping-prop} of
  Definition~\ref{def:hier-Rota-Baxter} allows us to
  extract~$E_{n+1}^*$ from the outer~$\cum^x$. Then we
  apply~\eqref{eq:vert-subst-spc} to obtain
  \begin{align*}
    \cum^x g \, L_x(e_1+v)^* \cum^x u
    &= E_{n+1}^* \, \cum^x L_x(e_1+v-e_n)^* \cum^x \tilde{g} u\\
    &= E_{n+1}^* \, L_y^{-1}(v'-e_n)^* \big[ L_x(e_1)^*, \cum^x \big]
    \cum^y L_y(v'-e_n)^* \, \tilde{g} u,
  \end{align*}
  and we observe that~$L_y^{-1}(v'-e_n) \, E_{n+1} = (I_n \oplus e_2)
  \, L_y^{-1}(v')$ and~$L_y(v'-e_n)^* \tilde{g} u = \bar{g} \,
  L_y(v')^* u$, which may be verified by a short calculation (again
  using straightness for the latter).
\end{proof}

The technique of slack variables gives considerable power to the
notion of Rota-Baxter hierarchy. For example, choosing~$\galg =
\bigcup_{n\ge0} C^\infty(\RR_+^n)$, the \emph{convolution of
  univariate functions}~$\star\colon \galg_1 \times \galg_1 \to
\galg_1$ can be defined by
\begin{equation*}
  f \star g := (I_1 \oplus e_x)^* \cum^y (e_x - e_y)^*\! f \, e_y^* g,
\end{equation*}
which means~$(f \star g)(x) = \cum^x f(x-y) \, g(y) \, dy$, recovering
the classical definition of the Duhamel convolution. Adding
evaluations as in the ordinary
case~\cite[\S3]{RosenkranzRegensburger2008a}, and limits as
``evaluations at
infinity''~\cite[\S4]{AlbrecherConstantinescuPirsicEtAl2009}, one can
define and study integral transforms (e.g.\@ Fourier, Laplace) in this
algebraic framework. We will not pursue these topic in the present
paper. For us, the main use of slack variables is to carry around
coefficient functions within integral operators. This is a topic that
we shall now investigate in some detail.

\section{Rota-Baxter Bialgebras}
\label{sec:rota-baxter-bialgebras}

Before we can build up the operator rings for Rota-Baxter hierarchies,
we must first address the question of suitable coefficient domains. It
turns out that such domains not only have an algebra structure
(needed for composing multiplication operators) but also a
\emph{coalgebra structure} (needed for expressing basic linear
substitutions) and a scaling action (in conjunction with the coalgebra
this yields all linear substitutions).

\subsection{Conceptualizing Linear Substitutions via Scaled
  Bialgebras}
\label{ssec:lin-subst-bialg}

Again it is helpful to first look at the classical
example~$C^\infty(\RR^\infty)$. Intuitively, we would like to simplify
an integral operator like~$\cum^y f(x,y)$, acting as~$u(x,y) \mapsto
\cum^y f(x,y) u(x,y)$, by pulling out of the integral those parts
of~$f(x,y)$ that depend only on~$x$. For example, if~$f(x,y) =
(x+y)^2$ we would simplify
\begin{equation}
  \label{eq:expansion}
  \cum^y (x+y)^2 \, u(x,y) = x^2 \cum^y \, u(x,y) + 2x \, \cum^y y \,
  u(x,y) + \cum^y y^2 \, u(x,y).
\end{equation}
However, this kind of simplification is not possible for integrands
like~$f(x,y) = e^{xy}$ except if we are willing to use infinite
expansion like~$e^{xy} = \sum_k (1/k!) \, x^k y^k$. In the terminology
of integral equations, $f(x,y) = (x+y)^2$ is called a
\emph{separated}\footnote{In Analysis, the term \emph{degenerated} is
  most commonly used. From the viewpoint of Algebra, however, this
  term sounds too drastic so that we prefer the expression
  \emph{separated}.} kernel, as opposed to the non-separated
kernel~$f(x,y) = e^{xy}$. Since we would like to refrain from using
infinite sums (and hence topology), we will only allow separated
kernels as coefficients in this paper. This will be made precise in
Definition~\ref{def:separated-algebra} below.

The expansion step~\eqref{eq:expansion} can be understood as the
substitution~$x \mapsto x+y$ on~$g(x) = x^2$ to yield~$f(x,y) = x^2 +
2xy + y^2$. In other words, we have used the
\emph{coproduct}~$\Delta\colon K[x] \to K[x] \otimes K[x] \cong
K[x,y]$ defined by~$\Delta(x) = x \otimes 1 + 1 \otimes x$ so that~$f
= \Delta(g)$. Note that~$\Delta$ is an algebra homomorphism; in fact,
$K[x]$ has the structure of a bialgebra. For general properties of
bialgebras we refer to~\cite{Cartier2007,Downie2012,Sweedler1969} and
to~\cite[\S2]{Guo2012}. As we will make precise later
(Definition~\ref{def:Rota-Baxter-bialgebra}), the coproduct interacts
nicely with the Rota-Baxter structure.

Before we look at this interaction in more detail, it is apposite to
focus first on the substitution structure of those bialgebras that
provide ``separated kernels'' like the paradigmatic example~$K[x,y]$
above. It turns out that we can build a $K$-hierarchy from a given
bialgebra~$\balg$ from just one more ingredient, which we call
\emph{scaling}: an action of the ground field~$K$ that will be
extended to an action of the full matrix monoid~$\matma$ on the
tensor algebra over~$\balg$. In the classical example, this is the
action~$f(x) \mapsto f(\lambda x)$ for a function~$f \in
C^\infty(\RR)$ and a scalar~$\lambda \in \nonzero{\RR}$.

We formulate the basic properties of such an action in terms of the
\emph{convolution product} that we denote here by~$\circplus$. As
stated in the Introduction, all bialgebras are assumed to be
commutative and cocommutative. Recall~\cite[Thm.~2.3.4]{Guo2012} that
for a $K$-bialgebra~$\balg$ with product~$\nabla$, unit~$1$,
coproduct~$\Delta$ and counit~$\counit$ one defines the associative
and commutative operation~$\circplus$ on vector space endomorphism by
setting~$f \circplus g = \nabla \, (f \otimes g) \, \Delta$. If~$f$
and~$g$ are bialgebra endomorphisms, then both~$f \circplus g$ and~$f
\circ g$ are. Writing~$\BialgHom_K(\balg)$ for the set of bialgebra
$K$-endomorphisms, this yields two operations
\begin{equation*}
  \circplus, \circ\colon \BialgHom_K(\balg) \times \BialgHom_K(\balg)
  \to \BialgHom_K(\balg),
\end{equation*}
which are clearly also~$K$-linear. Moreover, one checks that~$(f
\circplus g) \circ h = (f \circ h) \circplus (g \circ h)$ and~$h \circ
(f \circplus g) = (h \circ f) \circplus (h \circ
g)$. Hence~$(\BialgHom_K(\balg), \circplus, \circ)$ is a commutative
unital semiring. The neutral element with respect to~$\circplus$ is
the composite~$\evl := 1 \circ \counit$ of the unit~$1\colon K \to
\balg$ and the counit~$\counit\colon \balg \to K$. Of course,
$\id_\balg\colon \balg \to \balg$ is the neutral element with respect
to~$\circ$.

Note that a (multiplicative) group action~$\nonzero{K} \times \balg
\to \balg$ is by definition a monoid homomorphism~$(\nonzero{K},
\cdot, 1) \to (\AlgHom_K(\balg), \circ, \id)$, which we shall
write~$\lambda \mapsto \lambda^*$. Clearly, we may extend it to a
monoid homomorphism~$K \to \AlgHom_K(\balg)$ by setting~$0^* :=
\evl$. If the~$\lambda^*$ are bialgebra homomorphisms, we refer
to~$\nonzero{K} \times \balg \to \BialgHom_K(\balg)$ as a \emph{group
  biaction}. For a scaling, we want this homomorphism to respect the
convolution product.

\begin{definition}
  \label{def:scaled-bialgebra}
  Let~$(\balg, \nabla, 1, \Delta, \counit)$ be a $K$-bialgebra. Then a
  group biaction~$\nonzero{K} \times \balg \to \balg$ is called a
  \emph{scaling} if the map~$(K, +, \cdot) \to (\BialgHom_K(\balg),
  \circplus, \circ)$, given by~$\lambda \mapsto \lambda^*$ is a
  semiring homomorphism. In this case, we call~$\balg$ a \emph{scaled
    bialgebra}.
\end{definition}

In the sequel, we shall suppress the notation~$\circ$ for the composition of
endomorphisms. From the definition, we have the additive law~$(\lambda_1 +
\lambda_2)^* = \lambda_1^* \circplus \lambda_2^* = \nabla (\lambda_1^* \otimes
\lambda_2^*) \Delta$. By induction, this generalizes immediately to
\begin{equation}
  \label{eq:additive-law}
  (\lambda_1 + \cdots + \lambda_n)^* = \nabla^n \, (\lambda_1^*
  \otimes \cdots \otimes \lambda_n^*) \, \Delta^n
\end{equation}
for all~$n > 0$, where~$\nabla^n$ and~$\Delta^n$ denote the obvious
iterations (see below for the definition). Observe that the image~$K^*
\subseteq \BialgHom_K(\balg)$ of the scaling homomorphism~$\lambda
\mapsto \lambda^*$ is automatically a \emph{field}. Hence we have a
field homomorphism~$K \to K^*$, and the group action~$\nonzero{K}
\times \balg \to \balg$ is faithful. Furthermore, note that~$\balg$ is
a \emph{Hopf algebra} with the antipode~$(-1)^*\colon \balg \to \balg$
since this is clearly the convolution inverse of~$\id_\balg = 1^*$.

We write~$\indhier{\balg} = \oplus_{i\ge 0} \balg_i$ for the tensor
algebra with grades~$\balg_0 := K$ and $\balg_i = \balg^{\otimes i}$
for $i>0$. The product on~$\balg$ is denoted by~$\nabla\colon \balg
\otimes \balg \to \balg$, its iterations by~$\nabla^n\colon \balg_n
\to \balg$. Likewise, we write~$\Delta^n\colon \balg \to \balg_n$ for
the \emph{iterated coproduct} defined by~$\Delta^1 = \id_{\balg}$ and
$\Delta^{n+1} = (\Delta^n \otimes \id_{\balg}) \, \Delta$. Iterating
coassociativity one obtains
\begin{equation}
  \label{eq:it-coassoc}
  (\Delta^{m_1} \otimes \cdots \otimes \Delta^{m_k}) \, \Delta^k
  = \Delta^{m_1 + \cdots + m_k}
\end{equation}
for all~$m_1, \dots, m_k \ge 0$ as
in~\cite[Lem.~1.1.9(2)]{Manetti2012} with slightly different
notation. Setting~$m_1 = 2, m_2 = \cdots = m_n = 1$ yields the
alternative recursion~$\Delta^{n+1} = (\Delta \otimes \id^{n-1}) \,
\Delta^n$ for computing the iterated coproduct. The counit axioms
imply
\begin{equation}
  \label{eq:it-counit}
  (\evl^{\otimes (i-1)} \otimes \id \otimes \evl^{\otimes (n-i)}) \,
  \Delta^n = 1^{\otimes (i-1)} \otimes \id \otimes 1^{\otimes (n-i)},
\end{equation}
where the right-hand side is the embedding~$\balg \to \balg_n$ defined
by~$f \mapsto 1^{\otimes (i-1)} \otimes f \otimes 1^{\otimes (n-i)}$.
One checks also the iterated scaling commutations~$\lambda^* \nabla^n
= \nabla^n (\lambda^* \otimes \cdots \otimes \lambda^*)$ for the
product and~$\Delta^n \lambda^* = (\lambda^* \otimes \cdots \otimes
\lambda^*) \Delta^n$ for the coproduct.

The \emph{induced maps} on the $n$-fold tensor products are
then~$\nabla^{n \otimes n} = (\nabla^n)^{\otimes n}\colon
\balg_n^{\otimes n} \to \balg_n$ and~$\Delta^{n \otimes n} =
(\Delta^n)^{\otimes n}\colon \balg_n \to \balg_n^{\otimes n}$. Note
that~$\balg_n$ is a bialgebra with (iterated) product $\nabla^{n
  \otimes n} \tau$ and (iterated) coproduct $\tau \Delta^{n \otimes
  n}$, where $\tau:=\tau_n\colon \balg_n^{\otimes n} \to
\balg_n^{\otimes n}$ is the transposition
\begin{equation*}
  (f_{11} \otimes
  \cdots \otimes f_{1n}) \otimes \cdots \otimes (f_{n1} \otimes \cdots
  \otimes f_{nn}) \mapsto (f_{11} \otimes \cdots \otimes f_{n1}) \otimes
  \cdots \otimes (f_{1n} \otimes \cdots \otimes f_{nn}).
\end{equation*}
If~$a = (a_1, \dots, a_n)$ is a column vector in~$K^n$ or a row vector
in~$K_n$ we write~$a^\otimes = a_1^* \otimes \cdots \otimes a_n^*$ for
the corresponding scaling map on~$\balg_n$. Likewise, for a matrix~$A
\in \matmon{K}{n}$ we write $A^\otimes = A_{1\bullet}^\otimes \otimes
\cdots \otimes A_{n\bullet}^\otimes$ for the scaling map
on~$\balg_n^{\otimes n}$. If~$\tilde{A}$ denotes the transpose matrix,
we have~$\tau A^\otimes = \tilde{A}^\otimes \tau$ or, in other words,
$\tau (A_{1\bullet}^\otimes \otimes \cdots \otimes
A_{n\bullet}^\otimes) = (A_{\bullet 1}^\otimes \otimes \cdots \otimes
A_{\bullet n}^\otimes) \tau$.

%

We can now define the matrix action in terms of the coproduct and the scaling
action. Note the appearance of a \emph{single}~$\tau$ in the expression
for~$M^*$ below: It allows us to interpret the left expression~$\nabla^{n
  \otimes n} \tau$ as the product map on~$\balg_n$ \emph{or}, via~$\tau
M^\otimes = \tilde{M} \tau$, the right expression~$\tau \Delta^{n \otimes n}$
on~$\balg_n$ as the corresponding coproduct map.


\begin{definition}
  \label{def:induced-hierarchy}
  Let~$(\balg, \nabla, 1, \Delta, \counit)$ be a scaled bialgebra.
  Then the \emph{induced matrix action} $\matmon{K}{n} \times \balg_n
  \to \balg_n$, written $(M, f) \mapsto M^* f$, is definded by~$M^* =
  \nabla^{n \otimes n} \tau M^{\otimes} \Delta^{n\otimes n}$. The
  algebra~$\indhier{\balg}$ together with the corresponding
  action~$\matma \times \indhier{\balg} \to \indhier{\balg}$ is
  called the \emph{induced hierarchy} for~$\balg$.
\end{definition}

If we define the action~$a^*\colon \balg \to \balg_n$ of a row~$a =
(a_1, \dots, a_n) \in \matspc{K}{}{n}$ by~$a^* = a^\otimes \Delta^n$, the
finite matrix action~$\matmon{K}{n} \times \balg_n \to \balg_n$ can be
written as
\begin{equation}
  \label{eq:ind-mat-action}
  M^* (f_1 \otimes \cdots \otimes f_n) = (M_{1\bullet}^* f_1) 
  \cdots  (M_{n\bullet}^* f_n),
\end{equation}
where juxtaposition on the right-hand side denotes the iterated
product in~$\balg_n$. The extension of the finite matrix action
to~$\matma \times \indhier{\balg} \to \indhier{\balg}$ is to be
understood as in Section~\ref{sec:hier-intdiffalg}: Every matrix
of~$\matma$ is of the block form~$\smallmat{A}{0}{0}{I}$ with~$A \in
\matmon{K}{r}$ and~$I$ the~$\infty \times \infty$ identity matrix, while
every~$f \in \indhier{\balg}$ is a finite sum of tensors of the
form~$f_1 \otimes \cdots \otimes f_s \otimes 1 \otimes 1 \otimes
\cdots$, so it suffices to choose~$n$ as the maximum of those~$r$
and~$s$ and then use the action~$\matmon{K}{n} \times \balg_n \to
\balg_n$. Let us now make sure that the induced hierarchy deserves its
name.

\begin{proposition}
  \label{prop:induced-hierarchy}
  Let~$(\balg, \Delta, \counit)$ be a scaled bialgebra. Then the
  induced hierarchy~$\indhier{\balg}$ is a $K$-hierarchy in the sense
  of Definition~\ref{def:hierarchy}.
\end{proposition}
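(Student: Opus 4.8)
The plan is to verify the four requirements of Definition~\ref{def:hierarchy} for the ascending algebra $\galg_n := \balg_n$ (with $\balg_0 = K$), embedded into $\galg_{n+1}$ by $f \mapsto f \otimes 1$. This embedding is a unital algebra map, so $(\balg_n)$ is an ascending $K$-algebra with direct limit $\indhier{\balg}$. Throughout I would argue from the componentwise formula~\eqref{eq:ind-mat-action}, writing $M^*(f_1 \otimes \cdots \otimes f_n) = \prod_{i=1}^n M_{i\bullet}^* f_i$ with $M_{i\bullet}^* = M_{i\bullet}^\otimes \Delta^n$ and the product taken in $\balg_n$.

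Most clauses then follow directly from this formula. Each row map $M_{i\bullet}^*\colon \balg \to \balg_n$ is a unital algebra homomorphism, since $\Delta^n$ is an algebra map and $M_{i\bullet}^\otimes$ is a tensor of the algebra maps $\lambda^*$; as $\balg_n$ is commutative, $M^*$ is a unital algebra endomorphism of $\balg_n$, so in particular $M^*(\balg_n) \subseteq \balg_n$ for $M \in \matmon{K}{n}$. The identity $I^* = \id$ reduces to $e_i^* = 1^{\otimes(i-1)} \otimes \id \otimes 1^{\otimes(n-i)}$, which is exactly the iterated counit law~\eqref{eq:it-counit}. For straightness I would embed $f \in \galg_n$ as $f_1 \otimes \cdots \otimes f_n \otimes 1 \otimes \cdots$ in $\balg_N$: every factor $M_{i\bullet}^*(1) = 1$ for $i > n$, so only rows $1, \dots, n$ of $M$ contribute, and these coincide with those of $M_{\lrcorner n}$. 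The same computation with a trailing $0^* = \evl$ (using the counit) gives $M^*(f \otimes 1) = (M^* f) \otimes 1$, so the action is well defined on the direct limit and restricts to each $\balg_n$. Finally $E_n = I - e_{nn}$ has zero $n$-th row, whence $(E_n)_{n\bullet}^* f_n = \evl^{\otimes n} \Delta^n f_n = \counit(f_n)\,1$ and thus $E_n^*(f_1 \otimes \cdots \otimes f_n) = \counit(f_n)\, f_1 \otimes \cdots \otimes f_{n-1} \otimes 1 \in \galg_{n-1}$.

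The substantial point is contravariance, $(MN)^* = N^* M^*$. Since both sides are algebra endomorphisms of $\balg_n$, and $\balg_n$ is generated as an algebra by the elements $1^{\otimes(p-1)} \otimes f \otimes 1^{\otimes(n-p)}$, it suffices to test them there. On such a generator, \eqref{eq:ind-mat-action} collapses the product to a single row map, giving $M^*(1^{\otimes(p-1)} \otimes f \otimes 1^{\otimes(n-p)}) = M_{p\bullet}^\otimes \Delta^n f$. Applying $N^*$, then moving each scaling $M_{pl}^*$ through the coproduct by the scaling commutation $\Delta^n \lambda^* = (\lambda^*)^{\otimes n} \Delta^n$ and contracting scalings by multiplicativity $N_{lm}^* M_{pl}^* = (M_{pl} N_{lm})^*$, I would bring $N^* M^*$ to the form in which slot $m$ equals $\prod_l (M_{pl} N_{lm})^*$ applied to the components of $(\Delta^n)^{\otimes n} \Delta^n f = \Delta^{n^2} f$ (the last identity being iterated coassociativity~\eqref{eq:it-coassoc}). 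The analogous reduction of $(MN)^*$, now folding $\prod_l (M_{pl} N_{lm})^*$ back into $(MN)_{pm}^* = (\sum_l M_{pl} N_{lm})^*$ by the additive law~\eqref{eq:additive-law}, produces the same product of scalings applied to $\Delta^{n^2} f$.

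The two resulting expressions differ only in the order in which the $n^2$ tensor factors of $\Delta^{n^2} f$ are distributed among the output slots---one side lists them as $(l,m)$ with $l$ major, the other as $(m,l)$ with $m$ major---and this is precisely the discrepancy absorbed by the single transposition $\tau$ in Definition~\ref{def:induced-hierarchy}. I expect this bookkeeping to be the main obstacle; the clean way through is to invoke cocommutativity of $\balg$, which makes $\Delta^{n^2} f$ invariant under any permutation of its factors, so the two orderings yield the same element and $(MN)^* = N^* M^*$ follows. With contravariance established, all axioms of Definition~\ref{def:hierarchy} are verified and $\indhier{\balg}$ is a $K$-hierarchy.
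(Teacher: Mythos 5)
Your proposal is correct, and for every clause except contravariance it matches the paper's proof (the $E_n^*$ computation via the counit law~\eqref{eq:it-counit} is the same argument, and you additionally spell out straightness and $I^*=\id$, which the paper treats as immediate). On the one substantial point, contravariance, you invoke exactly the same chain of identities as the paper---$\Delta^n$ being a bialgebra morphism, iterated coassociativity~\eqref{eq:it-coassoc} to recognize $\Delta^{n^2}f$, cocommutativity to absorb the transposition $\tau$, and the additive law~\eqref{eq:additive-law} to contract $\prod_l (M_{pl}N_{lm})^*$ into $(MN)_{pm}^*$---but you execute it differently: the paper computes on a general tensor $f_1\otimes\cdots\otimes f_n$ and must track a triple-indexed Sweedler expansion $f_{i,(j)(k)}$ together with a product over $i$, whereas you first observe that both $(MN)^*$ and $N^*M^*$ are algebra endomorphisms of $\balg_n$ and so need only be compared on the generators $1^{\otimes(p-1)}\otimes f\otimes 1^{\otimes(n-p)}$, where~\eqref{eq:ind-mat-action} collapses to a single row map. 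This generator reduction is legitimate (it rests on each $M^*$ being an algebra map, which you establish first) and buys a noticeably lighter computation at no cost in generality; the paper's version, by contrast, yields the explicit formula~\eqref{eq:matrix-action} for $M^*f$ on arbitrary tensors, which it reuses later. No gaps.
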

\begin{proof}
  The tensor algebra~$\indhier{\balg}$ is clearly an ascending
  algebra~$(\balg_n)$ with~$\balg_n = \balg^{\otimes n}$ and direct
  limit~$\indhier{\balg}$. Moreover, it is clear from the definition
  of the induced matrix action that~$M^*(\balg_n) \subseteq \balg_n$
  for all~$M \in \matmon{K}{n}$.

  Let us show that~$E_n^*(\balg_n) \subseteq \balg_{n-1}$.  We
  write~$\iota_j := \evl \otimes \cdots \otimes \id_\balg \otimes
  \cdots \otimes \evl\colon \balg_n \to \balg_n$ with all
  entries~$\evl$ except for the~$j$-th, which
  is~$\id_\balg$. Likewise, we write~$\eta := \evl \otimes \cdots
  \otimes \evl\colon \balg_n \to \balg_n$ for the tensor map having
  only~$\evl$. Then~$E_n^{\otimes} = \iota_1 \otimes \cdots \otimes
  \iota_{n-1} \otimes \eta$, and~\eqref{eq:ind-mat-action} yields
  \begin{equation*}
    E_n^* (f_1 \otimes \cdots \otimes f_{n-1} \otimes f_n) = (\iota_1
    \Delta^n f_1) \cdots (\iota_{n-1} \Delta^n f_{n-1}) (\eta \, \Delta^n
    f_n).
  \end{equation*}
  Since~$\evl\colon \balg \to K \subseteq \balg$, we have~$\eta \,
  \Delta^n f_n \in K^{\otimes n} \cong K$, so it suffices to
  show~$\iota_j \Delta^n f_j \in \balg_{n-1}$ for all~$j <
  n$. Writing~$\tilde{\iota}_j\colon \balg_{n-1} \to \balg_{n-1}$ for
  the corresponding tensor maps with~$\evl$ everywhere except for
  the~$j$-th entry~$\id_{\balg}$, we have~$\iota_j = \tilde{\iota}_j
  \otimes \evl$ and therefore
  \begin{equation*}
    \iota_j \Delta^n = (\tilde{\iota}_j \otimes \evl)
    (\Delta^{n-1} \otimes \id_{\balg}) \Delta = (\tilde{\iota}_j
    \Delta^{n-1} \otimes \id_\balg) (\id_\balg \otimes \evl)
    \Delta = \tilde{\iota}_j \Delta^{n-1} \otimes 1,
  \end{equation*}
  where the last identity uses the defining property of the counit
  in~$\balg$. But this clearly implies that~$\iota_j \Delta^n f_j =
  \tilde{\iota}_j \Delta^{n-1} f_j \in \balg_{n-1}$ for all~$j < n$,
  as was required.

  Next we prove that~$\matmon{K}{n} \times \balg_n \to \balg_n$ is a
  contravariant monoid action. For any matrices~$M, \tilde{M} \in
  \matmon{K}{n}$ and any function~$f := f_1 \otimes \cdots \otimes f_n
  \in \balg_n$ we must show
  \begin{equation*}
    (M \mkern-1.5mu\tilde{M})^* (f_1 \otimes \cdots \otimes f_n) = 
    \tilde{M}^* M^* (f_1 \otimes \cdots \otimes f_n).
  \end{equation*}
  We start by computing~$M^* (f_1 \otimes \cdots \otimes f_n)$. Using sumless
  Sweedler notation, we have
  \begin{equation*}
    \Delta^{n \otimes n} f = (f_{1, (1)} \otimes
    \cdots \otimes f_{1,(n)}) \otimes \cdots \otimes (f_{n,(1)} \otimes \cdots
    \otimes f_{n,(n)}) = \sbigotimes_i \sbigotimes_j f_{i,(j)},
  \end{equation*}
  where in the last expression (as in the rest of this proof) all indices range
  over~$\{1, \dots, n\}$. We have then~$M^\otimes \Delta^{n \otimes n} f =
  \bigotimes_i \bigotimes_j M_{ij}^* \, f_{i,(j)}$ and further
  \begin{equation}
    \label{eq:matrix-action}
    M^* f = \sbigotimes_j \sprod_i M_{ij}^* \, f_{i,(j)},
  \end{equation}
  with~$\prod$ denoting the iterated product of~$\balg$. Using the fact
  that~$\Delta^n$ is a bialgebra morphism, this implies~$\Delta^{n \otimes n}
  M^* f = \bigotimes_j \prod_i (M_{ij}^*)^{\otimes n} \Delta^n f_{i,(j)}$. Now
  observe that the~$\tilde{M}_{1\bullet}^\otimes, \dots,
  \tilde{M}_{n\bullet}^\otimes$ are algebra morphisms so that
  \begin{align*}
    \tilde{M}^\otimes \Delta^{n \otimes n} M^* f &= \sbigotimes_j \sprod_i \bigg(
    \!  \sbigotimes_k \, (M_{ij} \tilde{M}_{jk})^* \!\bigg) \bigg( \! \Delta^n
    f_{i,(j)} \! \bigg) = \sprod_i \sbigotimes_j \bigg( \!  \sbigotimes_k \,
    (M_{ij} \tilde{M}_{jk})^*
    \!\bigg) \bigg( \! \sbigotimes_k f_{i,(j)(k)} \! \bigg),\\
    &= \sprod_i \bigg( \! \sbigotimes_j \sbigotimes_k \, (M_{ij} \tilde{M}_{jk})^*
    \!\bigg) \bigg( \! \sbigotimes_j \sbigotimes_k f_{i,(j)(k)} \! \bigg)
  \end{align*}
  where the second step uses the definition of the product on~$\balg_n^{\otimes
    n}$. Applying the trans\-position~$\tau$ to this equation will swap~$j
  \leftrightarrow k$ in the double tensor products. However, note
  that~$\smash{\bigotimes_j \bigotimes_k} \, f_{i,(j)(k)} = \Delta^{n \otimes n}
  \Delta^n f_i = \smash{\Delta^{n^2}} \! f_i$
  by~\cite[Lem.~1.1.9(2)]{Manetti2012}. Since~$\balg$ is cocommutative, we
  have~$\Delta^N = \pi \, \Delta^N$ for any~$N > 0$ and any
  permutation~$\pi\colon \balg_N \to \balg_N$ of the tensor factors. In the
  special case of~$\pi = \tau$ on~$\balg_{n^2} \cong \balg_n^{\otimes n}$, this
  yields~$\tau \, \Delta^{n \otimes n} \Delta^n = \Delta^{n \otimes n}
  \Delta^n$, so we can keep the argument~$\Delta^{n \otimes n} \Delta^n
  f_i$. Since~$\tau$ is furthermore a morphism of algebras, $\tilde{M}^* \! M^*
  f$ comes to
  \begin{align*}
    \sprod_i \nabla^{n \otimes n} & \bigg( \! \sbigotimes_k \sbigotimes_j \,
    (M_{ij} \tilde{M}_{jk})^* \!\bigg) \Big( \Delta^{n \otimes n} \Delta^n f_i
    \Big) = \sprod_i \Bigg( \! \sbigotimes_k \nabla^n \bigg( \! \sbigotimes_j \,
    (M_{ij} \tilde{M}_{jk})^* \! \bigg) \Delta^n \! \Bigg) \Delta^n f_i,\\
    & \qquad = \sprod_i \Bigg( \! \sbigotimes_k \bigg( \! \sum_j M_{ij}
    \tilde{M}_{jk} \! {\bigg)\!\!}^* \, \Bigg) \Delta^n f_i = \sprod_i
    \sbigotimes_k \, (M \!  \tilde{M})_{ik}^* \, f_{i,(k)},
  \end{align*}
  where we have used~\eqref{eq:additive-law} in the last but one step. Comparing
  this with~\eqref{eq:matrix-action} and using again the definition of the
  product on~$\balg_n^{\otimes n}$, the claim follows.
\end{proof}

Let us give some \emph{important examples} of scaled bialgebras, which will also
turn out to be admissible coefficient algebras for building a suitable ring of
Rota-Baxter operators.

\begin{example}
  \label{ex:scaled-bialg}
  The prototypical example of a scaled bialgebra is the \emph{polynomial
    ring}~$K[x]$. Its coproduct is given by~$\Delta(x) = x \otimes 1 + 1 \otimes
  x$, which implies~$\Delta(x^n) = \sum_{k=0}^n \binom{n}{k} \, x^k \otimes
  x^{n-k}$ on the canonical $K$-basis of~$K[x]$. In other words, we have~$\Delta
  p(x) = p(x+y)$ under the natural isomorphism~$K[x] \otimes K[x] \cong
  K[x,y]$. This is a well-known example of a Hopf algebra, sometimes also called
  the binomial bialgebra~\cite[Ex.~2.2.3.2]{Guo2012}. We use the scaling
  action~$\nonzero{K} \times K[x] \to K[x]$ given by the
  substitutions~$\lambda^* f(x) = f(\lambda x)$. It remains to check that the
  action~$\lambda \mapsto \lambda^*$ respects the convolution product. Indeed,
  on the $K$-basis element~$x^m$ we have
  \begin{align*}
    (\lambda^* \circplus \mu^*) (x^m) &= \nabla \, (\lambda^* \otimes \mu^*) \,
    \Delta (x^m) = \sum_{k=0}^m \binom{m}{k} \, \nabla (\lambda^* \otimes \mu^*)
    (x^k \otimes x^{m-k})\\[-0.25ex]
    &= \sum_{k=0}^m \binom{m}{k} \, \nabla \big( (\lambda x)^k \otimes (\mu
    x)^{m-k} \big) = \sum_{k=0}^m \binom{m}{k} \, (\lambda x)^k (\mu x)^{m-k}
    = (\lambda x + \mu x)^m\\[0.75ex]
    &= (\lambda+\mu)^* x^m,
  \end{align*}
  as is required for a scaling. Under the aforementioned
  isomorphism~$K[x]^{\otimes n} \cong K[x_1, \dots, x_n]$,
  Definition~\ref{def:induced-hierarchy} yields now the induced matrix
  action~$\matmon{K}{n} \times K[x_1, \dots, x_n] \to K[x_1, \dots,
  x_n]$ defined via~\eqref{eq:ind-mat-action} by~$M^*(x_1^{k_1} \cdots
  x_n^{k_n}) = M_{1\bullet}^*(x^{k_1}) \cdots M_{n\bullet}^*(x^{k_n})$
  for the matrix~$M \in \matmon{K}{n}$ and exponent vector~$(k_1, \dots,
  k_n) \in \NN^n$. Since~$\Delta^n f(x) = f(x_1 + \cdots + x_n)$, the
  row action
  \begin{align*}
    M_{i\bullet}^* x^{k_i} &= (M_{i1}^* \otimes \cdots \otimes M_{in}^*) \, (x_1
    + \cdots + x_n)^{k_i}\\
    &= \sum_{l_1 + \cdots + l_n = k_i} \binom{k_i}{l_1, \dots, l_n} \, (M_{i1}^*
    \otimes \cdots \otimes M_{in}^*) \, (x_1^{l_1} \cdots x_n^{l_n})\\
    &= \sum_{l_1 + \cdots + l_n = k_i} \binom{k_i}{l_1, \dots, l_n} \,
    (M_{i1} x_1)^{l_1} \cdots (M_{in} x_n)^{l_n} =
    (M_{i1} x_1 + \cdots M_{in} x_n)^{k_i},
  \end{align*}  
  induces the overall action~$M^* (x_1^{k_1} \cdots x_n^{k_n}) =
  \prod_i \big( \sum_j M_{ij} x_j \big)^{k_i}$ with the intended effect
  of a linear substitution~$x_i \mapsto \sum_j M_{ij} x_j$. In other
  words, the column~$(x_1, \dots, x_n)$ gets multiplied on the left by
  the matrix~$M \in \matmon{K}{n}$.

  Similar considerations apply to the larger ring of \emph{exponential
    polynomials}~$K[x,e^{Kx}]$ and its variants (e.g.\@ restricting
  the exponents to submonoids of~$K$, like replacing~$e^{Kx}$ by
  $e^{\NN x}$). Since the coproduct is an algebra morphism, it is
  sufficient to define it on the algebra generators~$x$
  and~$e^{Kx}$. Clearly, $K[x]$ should be a sub-bialgebra, so~$\Delta$
  coincides on~$x$. For the exponential generators, one puts~$\Delta
  e^{\alpha x} = e^{\alpha x} \otimes e^{\alpha x}$; so these are
  group-like elements unlike the primitive element~$x$. Under the
  isomorphism~$K[x, e^{Kx}]^{\otimes n} \cong K[x_1, e^{Kx_1}, \dots,
  x_n, e^{Kx_n}]$ we have again~$\Delta^n f(x) = f(x_1, \dots, x_n)$
  for any~$f \in K[x,e^{Kx}]$. Moroever, it is easy to check
  that~$(\lambda+\mu)^* = \lambda^* \circplus \mu^*$, so we have a
  scaled bialgebra. As in the case of the polynomials, one sees
  that~$M^*$ acts via the linear substitution~$x_i \mapsto \sum_j
  M_{ij} x_j$.
\end{example}

\subsection{Integration in Scaled Bialgebras}
\label{ssec:int-bialg}

Let us now turn to the interaction between the substitution structure
and \emph{Rota-Baxter operators}. Here and henceforth we shall
suppress the unit~$1$ and the product~$\nabla$ of a bialgebra~$(\balg,
\nabla, 1, \Delta, \counit)$.

\begin{definition}
  \label{def:Rota-Baxter-bialgebra}
  We call~$(\balg, \Delta, \counit, \cum)$ a \emph{Rota-Baxter
    bialgebra}\footnote{Note that this notion is distinct from the
    Rota-Baxter coalgebra in sense of~\cite{JianZhang2014}. There may
    be interesting relations between the two concepts but this
    investigation will have to wait for future work. At this point,
    let us just mention that the standard integral on polynomials is a
    Rota-Baxter bialgebra (see Example~\ref{ex:Rota-Baxter-bialgebra}
    below) but not a Rota-Baxter coalgebra in the sense
    of~\cite{JianZhang2014}. Note also that the
    axiom~\eqref{eq:hor-subst-rule} involves the counit, unlike the
    axiom in~\cite{JianZhang2014}.}  over~$K$ if~$(\balg, \cum)$ is an
  ordinary Rota-Baxter algebra over~$K$ and~$(\balg, \Delta, \counit)$
  is a bialgebra such that~$\evl := 1 \circ \counit$ is the projector
  associated to~$\im \cum \dotplus K = \balg$ and the \emph{horizontal
    substitution rule} in the form
  \begin{equation}
    \label{eq:hor-subst-rule}
    \Delta \cum = (\cum \otimes \id) \, \Delta + (\evl \otimes \id) \,
    \Delta \cum
  \end{equation}
  is satisfied. We call~$(\balg, \Delta, \counit, \cum)$ a
  \emph{scaled Rota-Baxter bialgebra} if it is further endowed with a
  scaling~$\nonzero{K} \times \balg \to \galg$, $\lambda \mapsto
  \lambda^*$ subject to the diagonal substitution rule~$\cum \lambda^*
  = \lambda^{-1} \lambda^* \cum$.
\end{definition}

The condition that~$\evl$ be the \emph{projector} associated to~$\im
\cum \dotplus K = \balg$ may also be expressed by~$1 \circ \counit
\circ \cum = 0$ and~$1 \circ \counit \circ 1 = 1$, and this implies
the direct sum. Hence we may also define a Rota-Baxter bialgebra as a
a bialgebra with a Rota-Baxter operator that satisfies these two
conditions along with~\eqref{eq:hor-subst-rule}.

As remarked above, a scaled bialgebra has the antipode~$S:=(-1)^*$,
which in the case of a scaled Rota-Baxter algebra satisfies~$\cum S +
S \cum = 0$ by the diagonal substitution rule. This betrays the
\emph{oriented} nature of this notion of integration: In typical
cases, like those described in Example~\ref{ex:scaled-bialg}, one
has~$S\colon f(x) \mapsto f(-x)$, so the integral picks up a sign
under reflection (see also the remark at the end of
Example~\ref{ex:classical}).

Since the bialgebra~$(\balg, \Delta, \counit)$ is cocommutative by
hypothesis, the horizontal substitution rule is also valid in its
\emph{symmetric variant}~$\Delta \cum = (\id \otimes \cum) \, \Delta +
(\id \otimes \evl) \, \Delta \cum$. Using~\eqref{eq:it-counit} we can
write both versions in the form
\begin{equation}
  \label{eq:hor-subst-rule-alt}
  \Delta \cum f = \cum^{x\!} \, \Delta f + (\cum f)(y) =
  \cum^{y\!} \, \Delta f + (\cum f)(x)
\end{equation}
for functions~$f \in \balg_1 \subset \indhier{\balg}$. Here we have
used the notation~$h(x) = h \otimes 1$ and~$h(y) = 1 \otimes h$ for
the two embeddings~$\balg \hookrightarrow \balg_2$. Moreover, the
horizontal substitution rule~\eqref{eq:hor-subst-rule} can be iterated
as follows.

\begin{lemma}
  If~$(\balg, \Delta, \counit, \cum)$ is a Rota-Baxter bialgebra
  over~$K$, we have
  \begin{equation}
    \label{eq:hor-subst-rule-it}
    \Delta^n \cum = (\cum \otimes \id^{\otimes (n-1)}) \, \Delta^n + (\evl
    \otimes \id^{\otimes (n-1)}) \, \Delta^n \cum
  \end{equation}
  for any~$n>0$. Using the operations of~$\indhier{\balg}$, this may
  be written as~$\Delta^n \cum = \cum^x \Delta^n + \evl_x \,
  \Delta^n$.
\end{lemma}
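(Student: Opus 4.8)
The plan is to prove \eqref{eq:hor-subst-rule-it} by induction on $n$, using the axiom \eqref{eq:hor-subst-rule} as the engine of the inductive step together with the recursion $\Delta^{n+1} = (\Delta^n \otimes \id)\,\Delta$ for the iterated coproduct.

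First I would dispatch the base case $n=1$, where \eqref{eq:hor-subst-rule-it} reads $\cum = \cum + \evl\,\cum$ and hence amounts to $\evl\,\cum = 0$. This is exactly the assertion that $\evl = 1 \circ \counit$ annihilates $\im\cum$, which holds because $\evl$ is the projector associated with the direct sum $\im\cum \dotplus K = \balg$ (equivalently $1 \circ \counit \circ \cum = 0$).

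For the inductive step, assume \eqref{eq:hor-subst-rule-it} for $n$. Starting from $\Delta^{n+1}\cum = (\Delta^n \otimes \id)\,\Delta\cum$, I would first apply the axiom \eqref{eq:hor-subst-rule} to the rightmost $\Delta\cum$ and then the induction hypothesis to the resulting $\Delta^n\cum$, splitting $\Delta^{n+1}\cum = P + Q$ into an \emph{integral} term $P := (\Delta^n\cum \otimes \id)\,\Delta$ and an \emph{evaluation} term $Q := (\Delta^n\evl \otimes \id)\,\Delta\cum$. The decisive observation is that neither term collapses at once into the claimed shape; instead each relates to itself under $(\evl \otimes \id^{\otimes n})$ in a controlled way. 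Expanding $P$ once more via the induction hypothesis and simplifying the tensor compositions (using $\Delta^{n+1} = (\Delta^n \otimes \id)\,\Delta$) yields the self-referential identity
\[
  P = (\cum \otimes \id^{\otimes n})\,\Delta^{n+1} + (\evl \otimes \id^{\otimes n})\,P,
\]
while for $Q$ I would record the auxiliary fact $\Delta^n\evl = (\evl \otimes \id^{\otimes (n-1)})\,\Delta^n\evl$, which follows from $\Delta^n\evl = (\evl^{\otimes n})\,\Delta^n$ together with the idempotency $\evl\,\evl = \evl$; this gives $Q = (\evl \otimes \id^{\otimes n})\,Q$. Summing the two relations and using $\Delta^{n+1}\cum = P + Q$, the terms $(\evl \otimes \id^{\otimes n})P$ and $(\evl \otimes \id^{\otimes n})Q$ recombine into $(\evl \otimes \id^{\otimes n})(P+Q) = (\evl \otimes \id^{\otimes n})\,\Delta^{n+1}\cum$, leaving precisely \eqref{eq:hor-subst-rule-it} for $n+1$.

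I expect the main obstacle to be exactly this last point: a naive induction does not telescope, because re-expanding the integral term reproduces a term of the same form rather than terminating. The resolution is to treat the identity for $P$ as a fixed-point equation and to exploit that $Q$ is genuinely invariant under $(\evl \otimes \id^{\otimes n})$ (through idempotency of $\evl$), so that after summing, the two ``evaluation'' contributions reassemble into the desired $(\evl \otimes \id^{\otimes n})\,\Delta^{n+1}\cum$. The final reformulation $\Delta^n\cum = \cum^x\Delta^n + \evl_x\,\Delta^n$ is then immediate, reading $(\cum \otimes \id^{\otimes (n-1)})$ as $\cum^x$ and $(\evl \otimes \id^{\otimes (n-1)})$ as $\evl_x$ in the operations of $\indhier{\balg}$.
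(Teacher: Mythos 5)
Your proof is correct. Both you and the paper induct on $n$ via the recursion $\Delta^{n+1}=(\Delta^n\otimes\id)\,\Delta$ and begin with exactly the same decomposition $\Delta^{n+1}\cum=P+Q$ coming from one application of~\eqref{eq:hor-subst-rule}, but the way the evaluation contributions are reassembled is genuinely different. The paper expands $P$ by the induction hypothesis and then works explicitly: it applies~\eqref{eq:hor-subst-rule} a \emph{second} time to the resulting middle summand, uses~\eqref{eq:it-counit} and $\Delta^{n-1}\evl=\evl^{\otimes(n-1)}$ to identify the pieces concretely as $1\otimes\Delta^n\cum$ and $\pm\,1^{\otimes n}\otimes\cum$, cancels, and finally must still verify separately that $(\evl\otimes\id^{\otimes n})\,\Delta^{n+1}\cum=1\otimes\Delta^n\cum$. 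Your route replaces all of that by two structural observations: the induction hypothesis gives the fixed-point identity $P=(\cum\otimes\id^{\otimes n})\,\Delta^{n+1}+(\evl\otimes\id^{\otimes n})P$, while $Q=(\evl\otimes\id^{\otimes n})Q$ follows from $\Delta^n\evl=1^{\otimes n}\circ\counit$ and $\evl(1)=1$ (your detour through $\Delta^n\evl=\evl^{\otimes n}\Delta^n$ and idempotency is equally valid). Adding these and using $P+Q=\Delta^{n+1}\cum$ recombines the two error terms into $(\evl\otimes\id^{\otimes n})\,\Delta^{n+1}\cum$ without ever computing them, which is shorter and avoids the second use of the axiom. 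The only thing the paper's explicit computation buys is the intermediate form $\Delta^{n+1}\cum=(\cum\otimes\id^{\otimes n})\,\Delta^{n+1}+1\otimes\Delta^n\cum$, which is not used elsewhere; your base case $n=1$ (reducing to $\evl\cum=0$) also matches the paper's, and the inductive step works already from $n=1$ to $n=2$, so no separate base at $n=2$ is needed.
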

\begin{proof}
  Note that the case~$n=1$ is trivial since~$\Delta^1 = \id$ and~$\evl
  \cum = 0$. Hence we use induction over~$n$ with the base case~$n=2$
  for~$\Delta^2 = \Delta$, given by the
  hypothesis~\eqref{eq:hor-subst-rule}. Hence
  assume~\eqref{eq:hor-subst-rule-it} for a fixed~$n \ge 2$; we show
  it for~$n+1$. Using the definition of~$\Delta^{n+1}$, the base case
  and then the induction hypothesis yields
  \begin{align*}
    \Delta^{n+1} \cum = (\Delta^n \otimes \id) & (\cum \otimes \id) \,
    \Delta + (\Delta^n \otimes \id) (\evl \otimes \id) \, \Delta \cum
    = \Big( \big( (\cum \otimes \id^{\otimes (n-1)}) \, \Delta^n
    \big) \otimes \id \Big) \, \Delta\\
    & + \Big( \big( (\evl \otimes
    \id^{\otimes (n-1)}) \, \Delta^n \cum \big) \otimes \id \Big)
    \Delta + (\Delta^n \otimes \id) (\evl \otimes \id) \, \Delta \cum.
  \end{align*}
  Using~\eqref{eq:it-counit} and the alternative recursion
  for~$\Delta^n$ gives~$(\evl \otimes \id^{\otimes (n-1)}) \, \Delta^n
  = (1 \otimes \id^{\otimes (n-1)}) \, \Delta^{n-1}$ and~$(\Delta^n
  \otimes \id) (\evl \otimes \id) \, \Delta \cum = 1^{\otimes n}
  \otimes \cum$. Combining this with the definition of~$\Delta^{n+1}$
  in the first summand we get
  \begin{equation*}
    \Delta^{n+1} \cum = (\cum \otimes \id^{\otimes n}) \, \Delta^{n+1}
    + \Big( \big( (1 \otimes \id^{\otimes (n-1)}) \, \Delta^{n-1} \cum
    \big) \otimes \id \Big) \Delta + 1^{\otimes n} \otimes \cum.
  \end{equation*}
  Using~\eqref{eq:hor-subst-rule}, its middle summand is
  \begin{align*}
    \Big( \big( & (1 \otimes \id^{\otimes (n-1)}) \, \Delta^{n-1}
    \big) \otimes \id \Big) (\cum \otimes \id) \, \Delta = \Big( \big(
    (1 \otimes \id^{\otimes (n-1)}) \, \Delta^{n-1} \big) \otimes \id
    \Big) (\Delta \cum - (\evl \otimes \id) \,
    \Delta \cum)\\
    &= 1 \otimes \Delta^n \cum - \Big( \big( (1 \otimes \id^{\otimes
      (n-1)}) \, \Delta^{n-1} \evl \big) \otimes \id \Big) \Delta
    \cum = 1 \otimes \Delta^n \cum - 1^{\otimes n} \otimes \cum,
  \end{align*}
  where in the last step we have used~$\Delta^{n-1} \evl =
  \evl^{\otimes (n-1)}$ and~\eqref{eq:it-counit}. Substituting this
  into~$\Delta^{n+1} \cum$, it remains only to check that~$(\evl
  \otimes \id^{\otimes n}) \, \Delta^{n+1} \cum = 1 \otimes \Delta^n
  \cum$. Applying once again~\eqref{eq:it-counit},
  and~\eqref{eq:it-coassoc} with~$m_1=2, m_2=n-1$, one sees
  immediately that~$(\evl \otimes \id^{\otimes n}) \, \Delta^{n+1} = 1
  \otimes \Delta^n$.
\end{proof}

\begin{theorem}
  \label{thm:induced-hierarchy}
  Let~$(\balg, \Delta, \counit, \cum)$ be a scaled Rota-Baxter
  bialgebra. Then we obtain a Rota-Baxer hierarchy~$(\indhier{\balg},
  \cum^{x_n})_{n \in \NN}$ with the operators~$\cum^{x_n} =
  \id^{\otimes (n-1)} \otimes \cum$.
\end{theorem}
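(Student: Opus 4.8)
The plan is to take the $K$-hierarchy structure on $\indhier{\balg}$ for granted (Proposition~\ref{prop:induced-hierarchy}) and verify only that the operators $\cum^{x_n} = \id^{\otimes(n-1)}\otimes\cum$ promote it to a Rota-Baxter hierarchy, i.e.\ that they satisfy the four axioms of Definition~\ref{def:hier-Rota-Baxter}. Each $\cum^{x_n}$ is well defined on $\balg_m = \balg^{\otimes m}$ for $m \ge n$ by letting $\cum$ act on the $n$-th tensor factor and the identity elsewhere. Since $\cum$ is a Rota-Baxter operator on $\balg$ and the product on $\balg_m$ is taken factorwise, the Rota-Baxter identity survives tensoring with identities, so every $\cum^{x_n}$ is again a Rota-Baxter operator; and $\cum^{x_i}, \cum^{x_j}$ commute for $i \ne j$ because they act on disjoint factors.

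For the three structural axioms I would argue as follows. Mapping property: $\cum^{x_n}$ preserves $\balg_m$ for $n \le m$ as it touches only factor $n$, and it commutes with $\matfix{K}{m}^*$ because, reading off the induced action \eqref{eq:ind-mat-action}, a block matrix $\smallmat{I_m}{0}{0}{M}$ recombines only the factors of index $>m$ (feeding at most $K$-scalars, via the counit, into the first $m$ factors), while $\cum$ is $K$-linear on the disjoint factor $n$. Ordinariness: $\cum^{x_n}$ is injective since $\cum$ is and tensoring an injection with identities over a field stays injective; writing $\balg = \im\cum \dotplus K$ in the last factor gives $\balg_n = \im(\cum^{x_n}) \dotplus \balg_{n-1}$; and because $\evl = 1 \circ \counit$ is the projector for $\balg = \im\cum \dotplus K$, a short computation from \eqref{eq:ind-mat-action} and the counit axiom identifies $E_n^* = \id^{\otimes(n-1)}\otimes\evl$ as exactly the associated evaluation. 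Transposition: again from \eqref{eq:ind-mat-action} and the counit axiom a permutation matrix acts by permuting tensor factors, so $\tau = (i \; j)$ swaps factors $i$ and $j$ and hence intertwines $\cum^{x_i}$ and $\cum^{x_j}$.

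The substitution rules carry the real content. The diagonal rule \eqref{eq:scaling-spc} is immediate: the scaling matrix acts as $\lambda^*\otimes\id^{\otimes(n-1)}$ on the first factor, so the defining relation $\cum\lambda^* = \lambda^{-1}\lambda^*\cum$ of the scaled Rota-Baxter bialgebra yields the claim after tensoring. For the horizontal rule \eqref{eq:transvec-spc} I would compute $T_x(e_1)^*(f_1\otimes\cdots\otimes f_n) = f_{1,(1)}\otimes (f_{1,(2)}f_2)\otimes f_3\otimes\cdots\otimes f_n$ from \eqref{eq:ind-mat-action}, then expand $\Delta\cum$ by the bialgebra rule \eqref{eq:hor-subst-rule}: the summand $(\cum\otimes\id)\Delta$ reproduces $\cum^x T_x(e_1)^*$, while the remaining summand lies in the image of $E_x^* = \evl\otimes\id^{\otimes(n-1)}$ and is annihilated by the factor $(1-E_x^*)$, using $\evl\cum = 0$.

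The vertical rule \eqref{eq:vert-subst-spc} is the main obstacle. Following the hint, I would first use Lemma~\ref{lem:vert-subst-alt} to replace it by the symmetric identity \eqref{eq:vert-subst-alt}, whose eliminants all carry coefficient $1$; this isolates the combinatorial core from the scaling bookkeeping (the latter being absorbed by the lemma via the diagonal rule), and one may further reduce to minimal element $\lambda = 1$ by conjugating with a permutation and invoking the transposition property just verified. It then remains to check \eqref{eq:vert-subst-alt} on a pure tensor $f_1\otimes\cdots\otimes f_n$: the eliminant $L_x(\sum_{i\in\Lambda}e_i)^*$ splits each coupled factor by the coproduct and multiplies one piece into factor $1$, the two surrounding $\cum^x$'s integrate that factor, and one expands $\Delta\cum$ through the horizontal rule (its iterate \eqref{eq:hor-subst-rule-it} when several factors are coupled). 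The hard part will be the bookkeeping of these coproduct splittings across the coupled factors and matching them against the right-hand side; the decisive cancellation—the abstract analogue of the ``rectangle minus trapezoid'' decomposition of Example~\ref{ex:classical}—comes from a single use of the Rota-Baxter identity $(\cum a)(\cum b) = \cum(a\cum b) + \cum((\cum a)b)$. This already settles the base case $n=2$, $v=0$, where $\cum^x L_x(e_1)^*\cum^x = [L_x(e_1)^*,\cum^x]\cum^y$ reduces, after substituting $\Delta\cum f_2 = \cum(f_{2,(1)})\otimes f_{2,(2)} + 1\otimes\cum f_2$, to exactly this identity: the contribution of the first summand reproduces the left-hand side while that of $1\otimes\cum f_2$ cancels.
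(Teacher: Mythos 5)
Your verification of axioms (a)--(c), the diagonal rule, and the horizontal rule follows the same route as the paper and is correct; in particular your organization of the horizontal rule from the right-hand side, killing the summand $(\evl\otimes\id)\,\Delta\cum$ with the factor $(1-E_x^*)$ and using $\evl\cum=0$ on the other summand, is equivalent to the paper's computation. Your base case of the vertical rule ($n=2$, $v=0$), where a single application of the Rota-Baxter identity after expanding $\Delta\cum f_2$ via \eqref{eq:hor-subst-rule} produces the cancellation, is also exactly the mechanism the paper exploits.

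The gap is in the general vertical case. Lemma~\ref{lem:vert-subst-alt} removes the scaling bookkeeping, but the right-hand side of \eqref{eq:vert-subst-spc} is still conjugated by $L_y(v')^*$ and $L_y^{-1}(v')^*$, and on a pure tensor these do not simply cancel: $L_y(v')^*$ splits each coupled factor $f_i$ by the coproduct and multiplies one leg into the second tensor slot, and $L_y^{-1}(v')^*$ afterwards multiplies an antipode image $Sf_{i,(2)(1)}$ into that same slot, leaving products of the form $f_{i,(1)(2)}\,Sf_{i,(2)(1)}$. Collapsing these requires the Hopf-algebra identity $\nabla(\id\otimes S)\Delta=\evl$, available because a scaled bialgebra has antipode $S=(-1)^*$ (remark after Definition~\ref{def:scaled-bialgebra}); this is the content of the paper's identity \eqref{eq:intermed}, which is invoked twice in the argument. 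Your sketch attributes the decisive cancellation solely to the Rota-Baxter identity, but that identity only resolves the two nested $\cum^x$'s; without the antipode relation the ``bookkeeping of the coproduct splittings'' you defer does not close whenever $v'\neq 0$. (Two smaller omissions: one must first split off the tensor factors not touched by the eliminant --- the paper's $\hat f^0\hat f^1$ decomposition --- and conjugate so that the coupled indices become $\{1,\dots,n\}$; your permutation remark addresses the position of the minimal index, which is automatic here, rather than this contiguity reduction.)
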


\begin{proof}
  It is easy to see that the~$\cum^{x_n}$ are commuting Rota-Baxter
  operators on~$\indhier{\balg}$. Hence let us now check
  conditions~\eqref{it:mapping-prop} to~\eqref{it:sub-rule} of
  Definition~\ref{def:hier-Rota-Baxter}:

  \eqref{it:mapping-prop} Let~$n \le m$. From the definition of the
  Rota-Baxter operators, $\cum^{x_n} \balg^{\otimes m} \subseteq
  \balg^{\otimes m}$ holds. For showing~$\cum^{x_n} \matfix{K}{m}^* =
  \matfix{K}{m}^* \cum^{x_n}$, choose any
  \begin{equation*}
    \tilde{M} = \begin{pmatrix} I_m & 0\\ 0 & M \end{pmatrix}
    \qquad \text{with $M \in \matmon{K}{r}$}
  \end{equation*}
  and~$f \in \indhier{\balg}$. We may choose~$k \ge m+r$ such that~$f
  \in \balg^{\otimes k}$ with~$f=f_1 \otimes \cdots \otimes
  f_k$. By~\eqref{eq:ind-mat-action} we have
  \begin{equation*}
    \tilde{M}^* f = (e_1^* f_1) \cdots (e_m^* f_m) \,
    (\tilde{M}_{m+1,\bullet}^* \, f_{m+1}) \cdots
    (\tilde{M}_{m+r,\bullet}^* \, f_{m+r})
  \end{equation*}
  with the rows~$\tilde{M}_{m+1,\bullet} = (0 \; M_{1\bullet}), \dots,
  \tilde{M}_{m+r,\bullet} = (0 \; M_{r\bullet})$ each having~$m$
  leading zeroes. From~\eqref{eq:it-counit} it follows that~$(e_1^*
  f_1) \cdots (e_m^* f_m) = f_1 \otimes \cdots \otimes f_m \in
  \balg^{\otimes k}$. Unfolding the definition of~$\tilde{M}_{m+j}^*
  \; (j = 1, \dots, r)$ and using
  coassociativity~\eqref{eq:it-coassoc} we have
  \begin{align*}
    \tilde{M}_{m+j}^* f_{m+j} &= (\evl^{\otimes m} \otimes
    M_{j\bullet}^\otimes) \, \Delta^{m+r} f_{m+j} = (\id^{\otimes m}
    \otimes M_{j\bullet}^\otimes) \, (\evl^{\otimes m} \Delta^m
    \otimes \Delta^r) \, \Delta f_{m+j}\\
    & = \evl(f_{m+j,(1)}) \, 1^{\otimes m} \otimes M_{j\bullet}^*
    f_{m+j,(2)},
  \end{align*}
  where the last step uses the relation~$\evl^{\otimes m} \Delta^m g =
  \evl(g) \in K \subseteq \balg_m$ for~$g \in \balg$, which follows
  from~\eqref{eq:it-counit}. Altogether we obtain
  \begin{equation*}
    \tilde{M}^* f = \Big( \sprod_{j=1}^r \evl(f_{m+j,(1)}) \Big)
    f_1 \otimes \cdots \otimes f_m \otimes \sbigotimes_{j=1}^r
    M_{j\bullet}^* f_{m+j,(2)},
  \end{equation*}
  from which it is clear that~$\cum^{x_n} \tilde{M}^* f = \tilde{M}^*
  \cum^{x_n} f$ since the Rota-Baxter operator~$\cum^{x_n}$ affects
  only the prefix~$f_1 \otimes \cdots \otimes f_m$.

  \eqref{it:ordinary-piece} We must show that~$(\balg_n, \cum^{x_n})$
  is an ordinary Rota-Baxter algebra over~$\balg_{n-1}$ with
  evaluation~$E_n^*$. Computing the tensor kernel
  via~\cite[Prop.~2.17]{Downie2012} we have
  \begin{equation*}
    \ker \cum^{x_n} = (\ker \id^{\otimes (n-1)}) \otimes \balg +
    \balg_{n-1} \otimes \ker \cum = 0 \otimes \balg + \balg_{n-1}
    \otimes 0
  \end{equation*}
  since~$\cum$ is injective; we conclude that~$\cum^{x_n}$ is
  injective as well. Since~$(\balg, \cum)$ is ordinary we have
  also~$\balg = K \dotplus \im \cum$, which implies
  \begin{equation*}
    \balg_n = \balg_{n-1} \otimes \balg = (\balg_{n-1} \otimes K)
    \dotplus (\balg_{n-1} \otimes \im \cum) \cong
    \balg_{n-1} \dotplus \im \cum^{x_n}
  \end{equation*}
  so~$(\balg_n, \cum^{x_n})$ is indeed ordinary
  over~$\balg_{n-1}$. The projector along the decomposition above is
  clearly~$\id^{\otimes (n-1)} \otimes \evl$, which sends~$f_1 \otimes
  \cdots \otimes f_{n-1} \otimes f_n$ to~$\evl(f_n) \, f_1 \otimes
  \cdots \otimes f_{n-1} \in \balg_{n-1} \subseteq \balg_n$. This
  agrees with
  \begin{equation*}
    E_n^* (f_1 \otimes \cdots \otimes f_{n-1} \otimes f_n) = (e_1^*
    f_1) \cdots (e_{n-1}^* f_{n-1}) \, (0^* f_n)      
  \end{equation*}
  since~$(e_1^* f_1) \cdots (e_{n-1}^* f_{n-1}) = f_1 \otimes \cdots
  \otimes f_{n-1}$ again by~\eqref{eq:it-counit} and~$0^* f_n =
  \evl^{\otimes n} \Delta^n f_n = \evl(f_n) \in K$ as in
  Item~\eqref{it:mapping-prop} of this proof.

  \eqref{it:perm-action} If~$\tau = (i \; j)$ is any transposition,
  the property~$\tau^* \cum^{x_i} = \cum^{x_j} \tau^*$ follows
  directly from the definition of the Rota-Baxter
  operators~$\cum^{x_n}$.

  \eqref{it:sub-rule} Finally, we have to show the three instances of
  the substitution rule. Among these, the diagonal substitution
  rule~\eqref{eq:scaling} is an immediate consequence of the diagonal
  substitution rule in~$(\balg, \Delta, \counit, \cum)$ as specified
  in Definition~\ref{def:Rota-Baxter-bialgebra}.

  For the horizontal substitution rule~\eqref{eq:transvec-spc} note
  that~$T_x(e_1) = (e_1 + e_2) \oplus e_2$ so that
  \begin{equation*}
    T_x(e_1)^* (f \otimes g) = (e_1+e_2)^* f \cdot e_2^* g
    = (\Delta f) (1 \otimes g) = f_{(1)} \otimes
    f_{(2)} g,
  \end{equation*}
  using again~\eqref{eq:it-counit} for the second
  factor. Since~$\cum^x (f \otimes g) = (\cum f) \otimes g$,
  using~\eqref{eq:hor-subst-rule} yields
  \begin{align*}
    T_x(e_1)^* & \cum^x (f \otimes g) = (\Delta \, \cum f) \, (1
    \otimes g) = \big( \cum^x \Delta f + 1 \otimes \cum f \big)
    (1 \otimes g)\\
    &= \cum f_{(1)} \otimes f_{(2)} g + 1 \otimes (\cum f) g =
    \cum^x \, T_x(e_1)^* (f \otimes g) + E_x^* T_x(e_1)^* \cum^x (f
    \otimes g),
  \end{align*}
  where the last summand comes from observing that~$E_x^* \,
  T_x(e_1)^* = \smallmat{0}{1}{0}{1}^*$ and hence
  \begin{equation*}
    E_x^* T_x(e_1)^* (\cum f) \otimes g = (e_2^* \cum f) \, (e_2^*
    g) = (\evl \otimes \id) \, \Delta \cum f \cdot (\evl \otimes
    \id) \, \Delta g = 1 \otimes (\cum f)g,
  \end{equation*}
  where~\eqref{eq:it-counit} was employed for the last step.

  It remains to show the vertical substitution
  rule~\eqref{eq:vert-subst-spc}, which will need a bit more
  effort. By Lemma~\ref{lem:vert-subst-alt}, we may assume that~$v'
  \in \{0,1\}^{n-2}$. Let us write~$w = e_1 + v \in \{0,1\}^{n-1}$,
  and let~$\Lambda \subset \nonzero{\NN}$ the set containing $1$ and
  all indices~$i$ with~$w_{i-1} = 1$. By the definition of~$\cum^x$
  and~$\cum^y$, it suffices to show~\eqref{eq:vert-subst-spc}
  on~$\balg_n$. Hence let~$f = f_1 \otimes \cdots \otimes f_n \in
  \balg_n$ be arbitrary but fixed. We can write this as~$\hat{f}^0
  \hat{f}^1$ with~$\hat{f}^j = \hat{f}^j_1 \otimes \cdots \otimes
  \hat{f}^j_n \; (j = 0,1)$ and
  \begin{equation*}
    \hat{f}^0_i =
    \begin{cases}
      1 & \text{if $i \in \Lambda$,}\\
      f_i & \text{otherwise,}
    \end{cases}
    \quad\text{and}\quad
    \hat{f}^1_i = 
    \begin{cases}
      f_i & \text{if $i \in \Lambda$,}\\
      1 & \text{otherwise.}
    \end{cases}
  \end{equation*}
  Observe that~$\hat{f}^0 = 1 \otimes 1 \otimes \cdots$ commutes
  with~$\cum^x$ and~$\cum^y$. Using~\eqref{eq:it-counit} we obtain
  also
  \begin{equation}
    \label{eq:Lxw-cum}
    L_x(w)^* \cum^x f = L_x(w)^* \, (\cum f_1) \otimes f_2 \otimes
    \cdots \otimes f_n = \hat{f}^0 \, e_1^* (\cum f_1) \, \sprod_{i \in
      \Lambda \setminus \{ 1 \}} (e_1 + e_i)^* f_i
  \end{equation}
  and hence~$\cum^x L_x(w)^* \cum^x f = \hat{f}^0 \cum^x L_x(w)^*
  \cum^x \hat{f}^1$. A similar calculation shows that~$\hat{f}^0$
  commutes also with~$L_y(v')^*$, $L_y^{-1}(v')^*$
  and~$L_x(e_1)^*$. Consequently we can pull out~$\hat{f}^0$ both from
  the left and right-hand side of~\eqref{eq:vert-subst-spc}, and we
  may thus assume without loss of generality that~$\hat{f}^0 = 1 \in
  \balg_n$. Let~$k$ be the cardinality of~$\Lambda$. If~$\tau \in S_n$
  is any permutation sending~$\Lambda$ to~$\{ 1, \dots, k\}$,
  conjugation by~$\tau^*$ will reduce~\eqref{eq:vert-subst-spc} to the
  case~$f \in \balg_k \subseteq \balg_n$. Hence we may also assume
  that~$k = n$ and~$\Lambda = \{ 1, \dots, n
  \}$. Then~\eqref{eq:Lxw-cum} shows that
  \begin{equation*}
    L_x(w)^* \cum^x f = e_1^* (\cum f_1) \, \sprod_{i>1} (e_1 +
    e_i)^* f_i
  \end{equation*}
  Using again~\eqref{eq:it-counit} we have~$e_1^* (\cum f_1) = (\cum
  f_1) \otimes 1 \otimes \cdots \otimes 1 \in \balg_n$. From the
  definition of the matrix action we have~$\Delta = (2 \: i)^*
  (e_1+e_i)^*$, which implies
  \begin{align}
    \cum^x L_x(w)^* & \cum^x f = \cum^x \Big( (\cum f_1) \sprod_{i>1}
    f_{i,(1)} \otimes\, \sbigotimes_{i>1} f_{i,(2)} \Big) = \big( \cum
    \, (\cum f_1) \sprod_{i>1} f_{i,(1)}
    \big) \otimes\, \sbigotimes_{i>1} f_{i,(2)}\nonumber\\
    &= \Big( (\cum f_1) \, (\cum \sprod_{i>1} f_{i,(1)}) \Big)
    \otimes\, \sbigotimes_{i>1} f_{i,(2)} - \cum \Big( f_1 \, \cum
    \sprod_{i>1} f_{i,(1)} \Big) \otimes \sbigotimes_{i>1}
    f_{i,(2)}\nonumber\\
    &= (\cum f_1) \, \cum^x \Big( \sprod_{i>1} f_{i,(1)} \otimes\,
    \sbigotimes_{i>1} f_{i,(2)} \Big) - \cum^x f_1 \Big( (\cum
    \sprod_{i>1} f_{i,(1)}) \otimes\, \sbigotimes_{i>1} f_{i,(2)}
    \Big)
    \label{eq:cum-Lxw-cum}
  \end{align}
  where in the last but one step the Rota-Baxter axiom for~$\balg_1$
  was applied, while the last step uses the embedding~$\balg_1 \subset
  \balg_n$. Turning now to the right-hand side
  of~\eqref{eq:vert-subst-spc}, we obtain
  \begin{equation*}
      L_y(v')^* f = f_1 \otimes f_2 \sprod_{i>2} f_{i,(1)} \otimes\,
      \sbigotimes_{i>2} f_{i,(2)}
  \end{equation*}

  \vskip-\lastskip\noindent
  and hence by~\eqref{eq:hor-subst-rule} also
  \begin{align*}
    L_x(e_1)^* & \cum^x \cum^y L_y(v')^* f = L_x(e_1)^* \Big( (\cum
    f_1) \otimes (\cum f_2 \sprod_{i>2}
    f_{i,(1)}) \otimes\, \sbigotimes_{i>2} f_{i,(2)} \Big)\\
    &= (\cum f_1) \, \Big( \Delta \big( \cum f_2 \sprod_{i>2} f_{i,(1)}
    \big) \otimes\, \sbigotimes_{i>2} f_{i,(2)}
    \Big)\\
    & = (\cum f_1) \, \cum^x \Big( f_{2,(1)} \sprod_{i>2} f_{i,(1)(1)}
    \otimes f_{2,(2)} \sprod_{i>2} f_{i,(1)(2)}
    \otimes\, \sbigotimes_{i>2} f_{i,(2)} \Big)\\
    & \qquad + (\cum f_1) \otimes \big( \cum f_2
    \sprod_{i>2} f_{i,(1)} \big) \otimes\, \sbigotimes_{i>2} f_{i,(2)}
    \Big) .
  \end{align*}
  Using the antipode~$S = (-1)^*$, we obtain further
  \begin{align}
    L_y^{-1}&(v')^* L_x(e_1)^* \cum^x \cum^y L_y(v')^* f\nonumber\\
    & = (\cum f_1) \, \cum^x \Big( f_{2,(1)} \sprod_{i>2} f_{i,(1)(1)}
    \otimes f_{2,(2)} \sprod_{i>2} f_{i,(1)(2)} \,
    Sf_{i,(2)(1)} \otimes \sbigotimes_{i>2} f_{i,(2)(2)} \Big)\label{eq:first-term}\\
    & \qquad + (\cum f_1) \otimes \big( \cum f_2 \sprod_{i>2}
    f_{i,(1)} \big) \sprod_{i>2} Sf_{i,(2)(1)} \otimes\,
    \sbigotimes_{i>2} f_{i,(2)(2)}\label{eq:second-term}
  \end{align}
  Let us first study the left summand of~\eqref{eq:first-term}, which
  we claim to be equal to the first term
  of~\eqref{eq:cum-Lxw-cum}. For this, it is sufficient to show that
  \begin{equation}
    \label{eq:intermed}
    \sprod_{i>2} f_{i,(1)(1)} \otimes \sprod_{i>2} f_{i,(1)(2)} \, S
    f_{i,(2)(1)} \otimes\, \sbigotimes_{i>2} f_{i,(2)(2)} = 
    \sprod_{i>2} f_{i,(1)} \otimes 1 \otimes\, \sbigotimes_{i>2} f_{i,(2)}.
  \end{equation}
  From~\eqref{eq:it-coassoc} we have~$(\Delta \otimes \Delta) \Delta =
  \Delta^4 = (\id \otimes \Delta \otimes \id) \Delta^3$, which may be
  used to rewrite the left-hand side as
  \begin{align*}
    \: & \sprod_{i>2} (3 \: i)^* (\id \otimes \nabla \otimes
    \id)(\Delta \otimes S \otimes \id) \, f_{i,(1)} \otimes
    f_{i,(2)(1)} \otimes f_{i,(2)(2)}\\
    &= \sprod_{i>2} (3 \: i)^* (\id \otimes \nabla \otimes
    \id) (\id^{\otimes 2} \otimes S \otimes \id) (\Delta \otimes
    \id) (\id \otimes \Delta) \Delta f_i,\\
    &= \sprod_{i>2} (3 \: i)^* (\id \otimes \nabla(\id
    \otimes S) \otimes \id) (\id \otimes \Delta \otimes \id)
    \Delta^3 f_i
    = \sprod_{i>2} (3 \: i)^* (\id \otimes \evl \otimes
    \id) \Delta^3 f_i\\
    &= \sprod_{i>2} (3 \: i)^* (\id \otimes \evl
    \otimes \id) (\Delta \otimes \id) \Delta f_i
    = \sprod_{i>2} (3 \: i)^* f_{i,(1)} \otimes 1 \otimes
    f_{i,(2)}
  \end{align*}
  where the third step applies the identity~$\nabla (\id \otimes S)
  \Delta = \evl$, which is true because~$(\balg, \counit, \Delta, S)$
  is a Hopf algebra. The last expression follows
  from~\eqref{eq:it-counit} and is clearly equal to the right-hand
  side of~\eqref{eq:intermed}.

  We determine now the second term~\eqref{eq:second-term} of the
  commutator on the right-hand side of~\eqref{eq:vert-subst-spc}. To
  start with, we compute
  \begin{align*}
    & L_x(e_1)^* \cum^y L_y(v')^* f = f_1 \, \Delta \big( \cum f_2
    \sprod_{i>2} f_{i,(1)} \big) \otimes\, \sbigotimes_{i>2} f_{i,(2)}\\
    &= f_1 \, \cum^x \Big( f_{2,(1)} \sprod_{i>2} f_{i,(1)(1)} \otimes
    f_{2,(2)} \sprod_{i>2} f_{i,(1)(2)} \otimes\, \sbigotimes_{i>2}
    f_{i,(2)} \Big)
    + f_1 \otimes \big( \cum f_2 \sprod_{i>2} f_{i,(1)}
    \big) \otimes\, \sbigotimes_{i>2} f_{i,(2)},
  \end{align*}
  using once again~\eqref{eq:hor-subst-rule}; hence the required
  commutator term~$L^{-1}_y(v')^* \cum^x L_x(e_1)^* \cum^y L_y(v')^*
  f$ is given by
  \begin{align*}
    \big( \cum & f_1 \cum f_{2,(1)} \sprod_{i>2} f_{i,(1)(1)} \big)
    \otimes f_{2,(2)} \sprod_{i>2} f_{i,(1)(2)} \, Sf_{i,(2)(1)}
    \otimes\, \sbigotimes_{i>2} f_{i,(2)(2)}\\
    & \qquad + (\cum f_1) \otimes \big( \cum f_2 \sprod_{i>2} f_{i,(1)}
    \big) \sprod_{i>2} Sf_{i,(2)(1)} \otimes\, \sbigotimes_{i>2}
    f_{i,(2)(2)} .
  \end{align*}
  We notice that the second summand cancels with the second summand
  of the first commutator term~\eqref{eq:first-term}. Therefore it
  remains to prove
  \begin{align*}
    \big( \cum & f_1 \cum f_{2,(1)} \sprod_{i>2} f_{i,(1)(1)} \big)
    \otimes f_{2,(2)} \sprod_{i>2} f_{i,(1)(2)} \, Sf_{i,(2)(1)}
    \otimes\, \sbigotimes_{i>2} f_{i,(2)(2)}\\
    &= \cum^x f_1 \Big( \cum f_{2,(1)} \sprod_{i>2} f_{i,(1)(1)} 
    \otimes f_{2,(2)} \sprod_{i>2} f_{i,(1)(2)} \, Sf_{i,(2)(1)}
    \otimes\, \sbigotimes_{i>2} f_{i,(2)(2)} \Big)\\
    &= \cum^x f_1 \Big( (\cum \sprod_{i>1} f_{i,(1)}) \otimes\,
    \sbigotimes_{i>1} f_{i,(2)} \Big),
  \end{align*}
  where the first equality uses only the definition of~$\cum^x$. But
  this clearly follows from
  \begin{equation*}
    f_{2,(1)} \sprod_{i>2} f_{i,(1)(1)} \otimes f_{2,(2)} \sprod_{i>2}
    f_{i,(1)(2)} \, Sf_{i,(2)(1)} \otimes\, \sbigotimes_{i>2}
    f_{i,(2)(2)} = \sprod_{i>1} f_{i,(1)} \otimes\, \sbigotimes_{i>1}
    f_{i,(2)},
  \end{equation*}
  which is a trivial consequence of the identity~\eqref{eq:intermed}
  that we have shown above.

  This concludes the proof that~$(\indhier{\balg}, \cum^{x_n})_{n \in
    \NN}$ is a Rota-Baxter hierarchy.
\end{proof}

\begin{example}
  \label{ex:Rota-Baxter-bialgebra}
  Let us now make sure that the scaled bialgebras of our standard
  model (Example~\ref{ex:scaled-bialg}) are in fact scaled Rota-Baxter
  bialgebras under the natural choice of Rota-Baxter operator. We do
  this for the exponential polynomials~$K[x,e^{Kx}]$, which includes
  the plain polynomials~$K[x]$ as a Rota-Baxter subhierarchy.

  The algebraic definition of the Rota-Baxter operator~$\cum$
  on~$K[x,e^{Kx}]$ can be given either in recursive or in summation
  form; the latter is more amenable for our purposes. Hence we have
  for~$k \in \NN, \alpha \in K$ the formulae
  \begin{align*}
    \cum x^k e^{\alpha x} &= \tfrac{(-1)^{k+1} k!}{\alpha^{k+1}} +
    \sum_{i=0}^k \tfrac{(-1)^i \fafac{k}{i}}{\alpha^{i+1}} x^{k-i}
    e^{\alpha x} \qquad (\alpha \neq 0),\\
    \cum x^k &= \tfrac{x^{k+1}}{k+1},
  \end{align*}
  where~$\fafac{k}{i} = k!/(k-i)!$ denotes the falling factorial. For
  verifying that~$(K[x,e^{Kx}], \cum)$ is a scaled Rota-Baxter
  bialgebra we have to check that it satisfies the diagonal
  substitution rule~$\cum \lambda^* = \lambda^{-1} \lambda^* \cum \;
  (\lambda \neq 0)$ and the horizontal substitution
  rule~\eqref{eq:hor-subst-rule}. The former is immediate, so let us
  verify the latter in the form~\eqref{eq:hor-subst-rule-alt}. For the
  $K$-basis vector~$f = x^k e^{\alpha x}$ we compute
  \begin{align*}
    \Delta \cum f &= \tfrac{(-1)^{k+1} k!}{\alpha^{k+1}} + \sum_{i=0}^k
    \sum_{j=0}^{k-i} \tbinom{k-i}{j} \tfrac{(-1)^i
      \fafac{k}{i}}{\lambda^{i+1}} \, x^j y^{k-i-j} e^{\alpha(x+y)},\\
    \cum^x \Delta f &= \sum_{l=0}^k \tbinom{k}{l} \tfrac{(-1)^{l+1}
      l!}{\alpha^{l+1}} \, y^{k-l} e^{\alpha y}
    + \sum_{l=0}^k \sum_{i=0}^l \tbinom{k}{l} \tfrac{(-1)^i \,
      \fafac{l}{i}}{\alpha^{i+1}} \, x^{l-i} y^{k-l} e^{\alpha(x+y)},\\
    (\cum f)(y) &= \tfrac{(-1)^{k+1} \, k!}{\alpha^{k+1}}
    - \sum_{l=0}^k \tfrac{(-1)^{l+1} \, \fafac{k}{l}}{\alpha^{l+1}} \,
    y^{k-l} e^{\alpha y}.
  \end{align*}
  We observe that the constant terms of~$\Delta \cum f$ cancels with
  the one of~$(\cum f)(y)$, likewise the first term of~$\cum^x \Delta
  f$ with the second term of~$(\cum f)(y)$; thus it remains to show
  the two double sums equal. But this follows immediately by
  transforming the index~$i$ of the outer sum in~$\Delta \cum f$ to~$l
  = i+j$ and then swapping the summations. The verification for the
  $K$-basis vector~$f = x^k$ is similar but simpler.
\end{example}

Since every ordinary Rota-Baxter algebra~$(\ogalg, \cum)$
contains~$K[x]$, its induced hierarchy contains the ascending algebra
of \emph{polynomial rings}~$K[x_1, \dots, x_n]$. These are closed
under all integrators and linear substitutions, so one can always
choose~$K[x_1, x_2, \dots]$ as the simplest coefficient domain.
Another natural choice for the classical
example~$C^\infty(\RR^\infty)$ is given by the exponential
polynomials discussed in Example~\ref{ex:Rota-Baxter-bialgebra}
above. Both of these are instances of admissible coefficient domains
in the following sense.

\begin{definition}
  \label{def:separated-algebra}
  A Rota-Baxter hierarchy is called \emph{separated} if it is of the
  form~$\indhier{\balg}$ for some scaled Rota-Baxter
  bialgebra~$\balg$.  If~$\galg$ is a fixed Rota-Baxter hierarchy, an
  \emph{admissible coefficient domain} for~$\galg$ is a separated
  Rota-Baxter subhierarchy~$\ogalg \le \galg$.
\end{definition}

As remarked above, the \emph{minimal choice} is to take~$\ogalg =
\indhier{K[x]}$, which is an admissible coefficient algebra for any
Rota-Baxter hierarchy~$(\galg, \cum^{x_n})_{n \in \NN}$. We may view
it as an analog of the prime field in any given field.

\section{The Ring of Partial Integral Operators}
\label{sec:pios}

We proceed now to the task of setting up a ring of partial integral
operators and substitutions acting on a given Rota-Baxter hierarchy,
and taking coefficients in any admissible coefficient domain. We will
do this in two steps: first identifying first an ideal of suitable
\emph{operator relations}, then constructing the operator ring as a
\emph{quotient algebra} of the free algebra modulo the relation ideal.

Note the parallel development for \emph{rings of differential
  operators}, where the $n$-th Weyl algebra~$A_n(K)$ is built as a
quotient of~$K[x_1, \dots, x_n; \der_1, \dots, \der_n]$
modulo~$[\der_i, x_j] = \delta_{ij}$. In the case of a \emph{single
  Rota-Baxter operator} (without linear substitutions), the
corresponding operator ring has been studied~\cite{GuoLin2015} in
connection with representations of Rota-Baxter algebras.

\subsection{Crucial Operator Relations}
\label{ssec:operator-relations}

Logically speaking, all the operator relations that we shall now
compile are in fact consequences of the axioms of Rota-Baxter
hierarchies (and admissible coefficient algebras). But we need the
operator relations in this special form since it facilitates the
algorithmic treatment of partial integral operators and linear
substitutions. Ultimately, we hope to have a \emph{noncommutative
  Gr\"obner basis} for the relation ideal (see the end of
Section~\ref{ssec:const-quot}).

First we need to clarify some \emph{notational
  conventions}. Let~$(\galg_n, \cum^{x_n})_{n \in \NN}$ be a fixed
Rota-Baxter hierarchy over~$K$. For avoiding confusion with
multiplication operators, we will from now on apply the alternative
notation~$f[M]$ for the action~$M^*(f)$. If~$M = (i \; j)$ is the
matrix of a transposition, we write~$f[i \; j]$ for~$f[M]$, and
parentheses are also dropped for embedded row vectors~$M$.  We may
identify all elements~$f \in \galg$ with their induced multiplication
operators~$\galg \to \galg, u \mapsto fu$. Thus the operator~$M^* f$
denotes the composition~$u \mapsto M^*(fu)$ rather than the
action~$f[M]$. The basic operator relation~$M^* f = f[M] \, M^*$
describes how substitutions interact with multiplication
operators. Note that~$f[M][\tilde M] = f[M \tilde M]$.

Let~$\ogalg$ be an \emph{admissible coefficient domain}
for~$\galg$. Then we have~$\ogalg = \indhier{\balg} \le \galg$ for a
scaled Rota-Baxter bialgebra~$\balg$. Since the latter is determined
by the coefficient domain as the first tensor grade~$\balg =
\ogalg_1$, we may omit reference to~$\balg$ altogether. If~$f \in
\ogalg_1$ we shall write~$f(x_i) \in \ogalg_{(1)}$ as an intuitive
shorthand for~$(1 \; i)^* f$.

The crucial property of an admissible coefficient domain is that it
provides \emph{tensor expansion} all multivariate functions~$g \in
\galg$ as per
\begin{equation}
  \label{eq:tensor-expn}
  g = \sum_{\mu=1}^r g_{1,\mu} \otimes \cdots \otimes g_{n,\mu}
\end{equation}
for suitable~$g_{1,\mu}, \dots, g_{n,\mu} \in \galg_1$, where we shall
consistently use Greek indices for the components. For definiteness,
we fix an ordered $K$-basis~$(b_i)$ for~$\galg_1$, and we use the
graded lexicographic ordering for the induced $K$-basis on~$\galg_n$.
Then each summand above has the form~$\lambda_\mu \, b_{i_1(\mu)}
\otimes \cdots \otimes b_{i_n(\mu)}$ so that we may choose~$g_{1,\mu}
= \lambda_\mu \, b_{i_1(\mu)}$ and~$g_{j,\mu} = b_{i_j(\mu)}$
for~$j>1$. Consequently, each~$g \in \galg_n$ is uniquely described by
the~$n$ \emph{component sequences}
\begin{equation*}
  g\{j\} := (1 \: j)^* \, (g_{j,1}, \dots, g_{j,r}) \in \ogalg_{(j)}^r
  \qquad (j=1, \dots, n),
\end{equation*}
where the number~$r \in \NN$ depends of course on the element~$g \in
\galg_n$; its minimial value is the tensor rank of~$g$. Note that we
recover~\eqref{eq:tensor-expn} by setting~$g_{j,\mu} =
\big(g\{j\}\big)_\mu$ and replacing the tensor product by the product
of~$\ogalg$. For~$\alpha \subseteq \{1, \dots, n\}$ we will also use
the extended sequence notation
\begin{equation*}
  g\{\alpha\} := \prod_{j \in \alpha} g\{j\} \in \ogalg_\alpha^r,
\end{equation*}
where the product on the right-hand side is taken
componentwise. If~$\alpha=(i_1, \dots, i_r)$ we denote this briefly
by~$g\{i_1, \dots, i_r\}$. For complements we write~$g\{\alpha\}' :=
g\{\alpha'\}$, where~$\alpha'$ is to be understood as~$\{1, \dots, n\}
\setminus \alpha$.

We will employ component sequences in conjunction with the following
variant of the \emph{Einstein summation convention}: Unless stated
otherwise, summation is implied over all Greek indices on a component
sequence (over the index range of the component
sequence).\footnote{Incidentally, there is a close relation to
  \emph{sumless Sweedler notation}: Writing~$\gamma := \Delta^n g \in
  \galg_n$ for the (iterated) coproduct, one obtains the
  relation~$\Delta^n g = \gamma\{1\}_\mu \cdots \gamma\{n\}_\mu =
  g_{(1)} \cdots g_{(n)}$ so that one could identify the ``formal
  symbol''~$g_{(i)}$ with the component sequence~$\gamma\{i\}$.} For
example, the expansion~\eqref{eq:tensor-expn} can be written as~$g =
g\{1\}_\mu \, g\{1\}'_\mu$. Similarly, we have equivalent expansions
like~$g = g\{ 1 \}_\mu \, g\{ 1, 2 \}'_\mu \, g\{ 2 \}_\mu$. The
practical value of this convention will become apparent when component
sequences are used in noncommutative operator expressions, as in the
following result generalizing Lemma~\ref{lem:simple-prop}. Here and
henceforth we employ the following notation for sequence substitution:
If~$\gamma \in \galg_\alpha^r$ is an extended operator sequence and~$M
\in \matmon{K}{n}$ any substitution, we write~$\gamma[M]$ for the
sequence with components~$\gamma_1[M], \dots, \gamma_r[M]$. In the
sequel, indices~$i,j$ range over~$\{1, \dots, n\}$.

\begin{lemma}
  \label{lem:after-pivot}
  Let~$(\galg_n, \cum^{x_n})_{n \in \NN}$ be a Rota-Baxter hierarchy
  over a field~$K$, and~$\ogalg$ an admissible coefficient domain
  for~$\galg$. Let~$M \in \matmon{K}{n}$ have~$j$-th column~$e_i$ and $g
  \in \ogalg_1$. Then
  \begin{equation*}
    \cum^{x_j} g(x_j) \, M^* = \gamma'_\mu \, (1-E_j^*) \, M^*
    \cum^{x_i} \, \gamma[i \: j]_\mu
  \end{equation*}
  for~$\gamma' = \tilde{g}\{j\}'$, $\gamma = \tilde{g}\{j\}$
  and~$\tilde{g} = g[-M_{i,1}, \dots, -M_{i,j-1}, 1, -M_{i,j+1}, \dots,
  -M_{i,n}]$.
\end{lemma}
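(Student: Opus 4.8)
The plan is to reduce to the diagonal case $i=j$ by conjugation, absorb the coefficient into a single transvection, and then split off the $x_i$-independent parts by a component-sequence expansion, finally reassembling the substitution factors into $M^*$.

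First I would set $\tau=(i\;j)$ and use the conjugation relation $\cum^{x_j}=\tau^*\cum^{x_i}\tau^*$ from Lemma~\ref{lem:simple-prop}\eqref{it:perm-int}, together with $\tau^* g(x_j)=g(x_i)\tau^*$ and $\tau^*M^*=(M\tau)^*$, to rewrite the left-hand side as $\tau^*\cum^{x_i}g(x_i)\,N^*$ with $N:=M\tau$. Swapping columns $i$ and $j$ turns the hypothesis ``$j$-th column $e_i$'' into ``$N$ has $i$-th column $e_i$'', which says that $x_i$ occurs only in the $i$-th substituted coordinate. Hence $N$ factors as $N=N_0\,T$, where $T=T_i(v)$ is the transvection with $v_k=N_{ik}$ $(k\neq i)$ and $N_0$ is obtained from $N$ by replacing its $i$-th row with $e_i$; a direct check shows that $N_0$ has both its $i$-th row and $i$-th column equal to $e_i$, so $N_0$ fixes $x_i$ and $x_i$ appears in no other coordinate of $N_0$.

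Next I would push the coefficient through the transvection. Writing $h:=g(x_i)[T^{-1}]$, the basic operator relation gives $g(x_i)\,T^*=T^*h$, so $\cum^{x_i}g(x_i)\,N^*=\cum^{x_i}T^*h\,N_0^*$, and the transvection rule of Lemma~\ref{lem:gen-transvec} turns this into $(1-E_i^*)\,T^*\cum^{x_i}h\,N_0^*$. A short calculation identifies $h=\tilde{g}[i\;j]$, so the component sequences transform as $h\{i\}=\gamma[i\;j]$ and $h\{i\}'=\gamma'[i\;j]$ for $\gamma=\tilde{g}\{j\}$ and $\gamma'=\tilde{g}\{j\}'$. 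Since $\gamma'[i\;j]_\mu$ is independent of $x_i$, I can expand $h=h\{i\}_\mu\,h\{i\}'_\mu$ and pull this factor out of the integral by Lemma~\ref{lem:simple-prop}\eqref{it:trivial-int}, leaving $\cum^{x_i}\gamma[i\;j]_\mu$ inside.

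Finally I would move each piece into its target position. The factor $\gamma'[i\;j]_\mu$ commutes with $T^*$ and with $1-E_i^*$, and $\tau^*\gamma'[i\;j]_\mu=\gamma'_\mu\tau^*$ converts it to $\gamma'_\mu$ on the far left; the operator $N_0^*$ commutes both with $\cum^{x_i}$ and with the $x_i$-only factor $\gamma[i\;j]_\mu$, so it can be carried left of $\cum^{x_i}\gamma[i\;j]_\mu$. It then remains to recombine $\tau^*(1-E_i^*)\,T^*N_0^*=(1-E_j^*)\,\tau^*T^*N_0^*=(1-E_j^*)\,M^*$, using $\tau^*E_i^*=E_j^*\tau^*$ (equivalently $\tau E_j\tau=E_i$) and $M=N_0T\tau$, which produces exactly the claimed right-hand side. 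The hard part will not be any single estimate but the bookkeeping of substitution operators: establishing the factorization $N=N_0T$ and the identity $h=\tilde{g}[i\;j]$ with all indices correct, and—most delicately—verifying that $N_0^*$ genuinely commutes with $\cum^{x_i}$, which is the one step that is not purely formal and needs a reduction to Item~\eqref{it:mapping-prop} of Definition~\ref{def:hier-Rota-Baxter} via a permutation conjugation (Lemma~\ref{lem:simple-prop}\eqref{it:perm-int}).
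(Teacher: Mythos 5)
Your argument is correct and is essentially the paper's own proof: the paper likewise reduces to the diagonal case (there normalized to $i=j=1$) by conjugation with transpositions, factors the substitution matrix as a transvection times a matrix fixing $x_i$ (in block form $\smallmat{1}{v}{0}{A}=\smallmat{1}{0}{0}{A}\smallmat{1}{v}{0}{I}$), pushes the coefficient through the transvection, applies Lemma~\ref{lem:gen-transvec}, tensor-expands to pull the $x_i$-independent components out of the integral via Lemma~\ref{lem:simple-prop}\eqref{it:trivial-int}, and commutes the remaining factors into place using Item~\eqref{it:mapping-prop} of Definition~\ref{def:hier-Rota-Baxter} and straightness. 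The only differences are presentational, since your explicit bookkeeping of $N=M\tau=N_0T$ and $h=\tilde g[i\:j]$ is absorbed in the paper into the initial reduction to $i=j=1$.
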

\begin{proof}
  It suffices to consider~$i=j=1$ since the general case follows then
  by multiplying with~$(1 \: j)^*$ from the left and~$(1 \; i)^*$ from
  the right. Hence write~$M = \smallmat{1}{v}{0}{A}$ with~$v \in
  K_{n-1}$ and~$A \in \matmon{K}{n-1}$. Note that~$M =
  \smallmat{1}{0}{0}{A} \smallmat{1}{v}{0}{I}$ with~$I := I_{n-1}$ so
  that
  \begin{align*}
    \cum^{x_1} & g(x_1) \, M^* = \cum^{x_1} g \,
    \smallmat{1}{v}{0}{I}^* \smallmat{1}{0}{0}{A}^* = \cum^{x_1}
    \smallmat{1}{v}{0}{I}^* \tilde{g} \, \smallmat{1}{0}{0}{A}^* =
    (1-E_1^*) \, \smallmat{1}{v}{0}{I}^* \cum^{x_1} \tilde{g} \,
    \smallmat{1}{0}{0}{A}^*\\
    &= (1-E_1^*) \, \smallmat{1}{v}{0}{I}^* \gamma'_\mu \, \cum^{x_1}
    \gamma_\mu \, \smallmat{1}{0}{0}{A}^*
    = (1-E_1^*) \, \smallmat{1}{v}{0}{I}^* \gamma'_\mu \,
    \smallmat{1}{0}{0}{A}^* \cum^{x_1} \gamma_\mu\\
    &= \gamma'_\mu \, (1-E_1^*) \, M^* \cum^{x_1} \gamma_\mu,
  \end{align*}
  where the third equality follows from Lemma~\ref{lem:gen-transvec},
  the fourth employs tensor expansion and Item~\eqref{it:trivial-int}
  of Lemma~\ref{lem:simple-prop}, the fifth follows from~$\gamma \in
  \ogalg_{(1)}^{r}$ and Item~\eqref{it:mapping-prop} of
  Definition~\ref{def:hier-Rota-Baxter}, the sixth from~$\gamma' \in
  \ogalg_{(1)'}^{r}$ and straightness.
\end{proof}

Lemma~\ref{lem:after-pivot} allows us to put arbitrary one-dimensional
integrators~$\cum^{x_i} f(x_i) M^*$ into normal form. All we have to
do is to apply one sweep of Gaussian elimination to~$M$ for creating
zeroes underneath the first pivot element in the~$j$-th column
of~$M$. If the pivot is~$M_{ij}$ so that~$M_{1j} = \cdots = M_{i-1,j}
= 0$, one uses the eliminant
\begin{equation}
  \label{eq:l-matrix}
  L_i(-l) =
  \begin{pmatrix}
    1&&& 0\\
    &\ddots && \vdots\\
    &&1& 0\\
    &&&1\\
    &&&-l_{i+1} & 1\\
    &&&\vdots && \ddots\\
    &&&-l_n &&& 1
  \end{pmatrix},
\end{equation}
where~$l = (l_{i+1}, \dots, l_n) \in K_{n-i}$ with~$l_k :=
M_{kj}/M_{ij}$. Applying this elimination yields
\begin{equation}
  \label{eq:red-mat}
  \tilde{M} := L_i(-l) \cdot M =
  \begin{pmatrix}
    M_{11} & \cdots &  0 & \cdots & M_{1n}\\
    \vdots && \vdots && \cdots\\
    M_{i-1,1} & \cdots & 0 & \cdots & M_{i-1,n}\\
    M_{i1} & \cdots & M_{ij} & \cdots & M_{in}\\
    \tilde{M}_{i+1,1} & \cdots & 0 & \cdots & \tilde{M}_{i+1,n}\\
    \vdots && \vdots && \vdots\\
    \tilde{M}_{n1} & \cdots & 0 & \cdots & \tilde{M}_{nn}
  \end{pmatrix}
\end{equation}
with~$M_{ij} e_i$ in its~$j$-th column.

\begin{proposition}[Normalization of One-Dimensional Integrators]\ \\
  \label{prop:1d-subst-rule}%
  Let~$(\galg_n, \cum^{x_n})_{n \in \NN}$ be a Rota-Baxter hierarchy
  over a field~$K$, and let~$\ogalg$ be an admissible coefficient
  domain for~$\galg$. Then for~$M \in \matmon{K}{n}$ and~$g \in \ogalg_1$
  we have
  \begin{equation}
    \label{eq:1d-subst-rule}
    \cum^{x_j} g(x_j) M^* =
    \begin{cases}
      M_{ij}^{-1} \, \gamma'_\mu (1-E_j^*) \tilde{M}^*
      \cum^{x_i} \gamma[\mmt{j}{1/M_{ij}}]_\mu \, L_i(l)^* &
      \text{if $i \neq \infty$},\\[0.4ex]
      \big(\cum^{x_j} g(x_j)\big) M^* & \text{otherwise},
    \end{cases}
  \end{equation}
  where~$i = \min \{ k \mid M_{kj} \neq 0 \}$, and~$\tilde{M} \in
  \matmon{K}{n}$, $l \in K_{n-i}$ as in~\eqref{eq:l-matrix},
  \eqref{eq:red-mat} if~$i \neq \infty$. Moreover, we set~$\gamma' =
  \tilde{g}\{j\}'$, $\gamma = \tilde{g}\{j\}$ and~$\tilde{g} =
  g[-\tfrac{M_{i,1}}{M_{ij}}, \dots, -\tfrac{M_{i,j-1}}{M_{ij}}, 1,
  -\tfrac{M_{i,j+1}}{M_{ij}}, \dots, -\tfrac{M_{i,n}}{M_{ij}}]$.
\end{proposition}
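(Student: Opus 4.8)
The plan is to follow the two cases of the statement. If $i = \infty$, i.e.\ the $j$-th column of $M$ vanishes, then Item~\eqref{it:zero-col} of Lemma~\ref{lem:simple-prop} gives $M^*(\galg) \subseteq \galg_{(j)}'$, so $M^* u$ is free of $x_j$ for every $u$. Item~\eqref{it:trivial-int} then lets the outer integrator pass this factor, $\cum^{x_j}\big(g(x_j)\, M^* u\big) = \big(\cum^{x_j} g(x_j)\big)\, M^* u$, which (coefficients commuting) is exactly the second line of~\eqref{eq:1d-subst-rule}. All remaining work is the case $i \neq \infty$.

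For $i \neq \infty$, write $p := M_{ij} \neq 0$ for the pivot and perform the single Gaussian sweep of~\eqref{eq:l-matrix},~\eqref{eq:red-mat}: with $l = (l_{i+1}, \dots, l_n)$, $l_k = M_{kj}/p$, the matrix $\tilde{M} = L_i(-l) M$ has $j$-th column $p\, e_i$, and $M = L_i(l)\,\tilde{M}$ gives $M^* = \tilde{M}^* L_i(l)^*$ by contravariance. I would then strip the pivot by factoring $\tilde{M} = N\,\mmt{j}{p}$, where $N := \tilde{M}\,\mmt{j}{1/p}$ has $j$-th column the unit vector $e_i$ (only column $j$ is rescaled), so that $\tilde{M}^* = \mmt{j}{p}^* N^*$. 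Using the basic operator relation in the form $g(x_j)\,\mmt{j}{p}^* = \mmt{j}{p}^*\,g(x_j)[\mmt{j}{1/p}]$ to move the scaling to the left of the coefficient (replacing $g(x_j)$ by a rescaled univariate coefficient $\hat{g}(x_j) = g(x_j)[\mmt{j}{1/p}]$ with $\hat{g}\in\ogalg_1$), and then the diagonal substitution rule~\eqref{eq:scaling}, $\cum^{x_j}\mmt{j}{p}^* = p^{-1}\mmt{j}{p}^*\cum^{x_j}$, reduces the left-hand side to
\begin{equation*}
  \cum^{x_j} g(x_j) M^* = p^{-1}\, \mmt{j}{p}^* \big( \cum^{x_j} \hat{g}(x_j)\, N^* \big)\, L_i(l)^*,
\end{equation*}
whose inner factor has a unit pivot column and is exactly the shape handled by Lemma~\ref{lem:after-pivot}.

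Applying Lemma~\ref{lem:after-pivot} to $\cum^{x_j}\hat{g}(x_j)\,N^*$ produces $\hat{\gamma}'_\mu (1-E_j^*) N^* \cum^{x_i} \hat{\gamma}[i\:j]_\mu$, where $\hat{\gamma},\hat{\gamma}'$ are built from $\hat{\tilde g} = \hat{g}[-M_{i1}, \dots, 1, \dots, -M_{in}]$ (row $i$ of $N$ agrees with that of $M$ off the pivot column, where it is $1$). The last step recombines the pivot scaling: $\mmt{j}{p}^*$ commutes with the $x_j$-free factors $\hat{\gamma}'_\mu$ and with $(1-E_j^*)$, while $\mmt{j}{p}^* N^* = (N\,\mmt{j}{p})^* = \tilde{M}^*$, reconstructing $\tilde{M}^*$ and the prefactor $p^{-1}$. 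For the innermost coefficient one uses $\hat{\tilde g} = \tilde{g}[\mmt{j}{1/p}]$ (the two substitution rows differ only in the $x_j$-coefficient $1$ versus $1/p$) together with the fact that the scaling $\mmt{j}{1/p}$ touches only $x_j$ and hence distributes over the component-sequence splitting, giving $\hat{\gamma}' = \gamma'$ and $\hat{\gamma} = \gamma[\mmt{j}{1/p}]$; combined with the direction-switching transposition $(i\:j)$ supplied by Lemma~\ref{lem:after-pivot}, this yields the inner coefficient $\gamma[\mmt{j}{1/M_{ij}}]$ of~\eqref{eq:1d-subst-rule}. I expect this final identification to be the main obstacle: one must keep straight the interplay of the transposition $(i\:j)$ with the pivot rescaling $\mmt{j}{1/M_{ij}}$, using crucially that $\gamma'$ is free of $x_j$ (so the scaling passes it untouched) and that $\gamma$ depends only on $x_j$ (so the scaling distributes cleanly over its components).
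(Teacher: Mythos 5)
Your argument is correct and rests on the same two pillars as the paper's proof: the case $i=\infty$ via Items~\eqref{it:zero-col} and~\eqref{it:trivial-int} of Lemma~\ref{lem:simple-prop}, and for $i\neq\infty$ the Gaussian sweep~\eqref{eq:red-mat} followed by Lemma~\ref{lem:after-pivot} and the diagonal substitution rule~\eqref{eq:scaling} to produce the prefactor $M_{ij}^{-1}$. The only structural deviation is where you park the pivot scaling: you factor $M = L_i(l)\,N\,\mmt{j}{M_{ij}}$ with $N=\tilde M\,\mmt{j}{1/M_{ij}}$, push $\mmt{j}{M_{ij}}^*$ to the far left (at the cost of rescaling the coefficient $g$ to $\hat g$) and commute it past $\cum^{x_j}$, whereas the paper uses $M^* = (\mmt{i}{1/M_{ij}}\tilde M)^*\,\mmt{i}{M_{ij}}^*\,L_i(l)^*$, keeps the row scaling in the middle, and commutes it past $\cum^{x_i}$ \emph{after} applying the lemma; both recombine the scaling into $\tilde M^*$ and yield the same right-hand side. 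Your final bookkeeping — that $\mmt{j}{1/M_{ij}}$ passes through $\gamma'$ untouched and distributes over the expansion of the $x_j$-dependent part, with the transposition $(i\:j)$ from Lemma~\ref{lem:after-pivot} relocating the inner coefficient to the integration variable — is exactly the identification the paper performs, and the mild looseness in writing the result as $\gamma[\mmt{j}{1/M_{ij}}]_\mu$ is already present in the statement itself, so no gap is introduced on your side.
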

\begin{proof}
  Assume first~$i=\infty$. Again it suffices to consider~$j=1$ since
  the general case follows by multiplying with~$(1 \: j)^*$ from the
  left. We have to show~$\cum^{x_1} g M^* f = \big(\cum^{x_1} g\big)
  M^* f$ for all~$f \in \galg$. We may assume~$f \in \galg_k$ for
  some~$k > n$, and we may also view~$M \in
  \matmon{K}{k}$. Setting~$\tau = (1 \: k)$, we obtain a matrix~$M
  \tau \in \matmon{K}{k}$ with~$k$-th column zero, and
  Item~\eqref{it:zero-col} of Lemma~\ref{lem:simple-prop} implies
  that~$(M\tau)^* f = \tau^* M^* \! f \in \galg_{(k)}'$. Then we
  obtain~$\cum^{x_k} (\tau^* g) (\tau^* \!  M^* f) = (\tau^* \!  M^*
  \! f) \, (\cum^{x_k} \tau^* g)$ from Item~\eqref{it:trivial-int} of
  the same lemma. Multiplying this relation by~$\tau^*$ from the left
  yields the desired identity.

  Now assume the minimum exists. We use the
  decomposition~\eqref{eq:red-mat} to obtain
  \begin{equation*}
    M^* = (\mmt{i}{1/M_{ij}} \tilde{M})^* \mmt{i}{M_{ij}}^* L_i(l)^*,
  \end{equation*}
  such that the $j$-th column of~$\mmt{i}{1/M_{ij}} \tilde{M}$
  is~$e_i$. Applying Lemma~\ref{lem:after-pivot}
  and~\eqref{eq:scaling} yields
  \begin{align*}
    \cum^{x_j} & g(x_j) \, M^* = \cum^{x_j} g \, (\mmt{i}{1/M_{ij}}
    \tilde{M})^* \mmt{i}{M_{ij}}^* L_i(l)^*\\
    &= \gamma'_\mu \, (1-E_j^*) (\mmt{i}{1/M_{ij}}
    \tilde{M})^* \cum^{x_i} \gamma[i \: j]_\mu \, \mmt{i}{M_{ij}}^* L_i(l)^*\\
    &= \gamma'_\mu \, (1-E_j^*) (\mmt{i}{1/M_{ij}} \tilde{M})^*
    \cum^{x_i} \mmt{i}{M_{ij}}^* \gamma[\mmt{j}{1/M_{ij}}]_\mu \, L_i(l)^*\\
    &= M_{ij}^{-1} \, \gamma'_\mu \, (1-E_j^*) (\mmt{i}{1/M_{ij}}
    \tilde{M})^* \mmt{i}{M_{ij}}^* \cum^{x_i}
    \gamma[\mmt{j}{1/M_{ij}}]_\mu \, L_i(l)^*\\
    &= M_{ij}^{-1} \, \gamma'_\mu (1-E_j^*) \tilde{M}^*
    \cum^{x_i} \gamma[\mmt{j}{1/M_{ij}}]_\mu \, L_i(l)^*
  \end{align*}
  which is indeed~\eqref{eq:1d-subst-rule} for~$i \neq \infty$.
\end{proof}

Before we proceed to the ordering of one-dimensional integrators, let
us note the following natural commutativity result for integrals and
evalutions along distinct axes.

\begin{corollary}
  \label{cor:eval-int-comm}
  For~$i \neq j$ the operators~$\cum^{x_i}$ and~$E_j^*$ commute.
\end{corollary}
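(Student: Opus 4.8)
The plan is to split on whether $i<j$ or $i>j$, dispatching the first case directly from the mapping property and reducing the second to it by a transposition. The structural fact underpinning everything is that the evaluation matrix $E_j = I - e_{jj}$ fixes every coordinate axis except $x_j$; in particular, for $i<j$ it lies in the block $\matfix{K}{i}$ of substitutions acting trivially on $x_1,\dots,x_i$.

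For $i<j$ the claim is then immediate: applying Item~\eqref{it:mapping-prop} of Definition~\ref{def:hier-Rota-Baxter} with $n=m=i$ gives $\cum^{x_i}\matfix{K}{i}^* = \matfix{K}{i}^*\cum^{x_i}$, and since $E_j^*\in\matfix{K}{i}^*$ this is exactly $\cum^{x_i}E_j^* = E_j^*\cum^{x_i}$.

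For $i>j$ I would conjugate by the transposition $\tau=(i\:j)$, which is an involution. Item~\eqref{it:perm-int} of Lemma~\ref{lem:simple-prop} gives $\cum^{x_i}=\tau^*\cum^{x_j}\tau^*$, while $\tau E_i\tau = E_j$ (as in the proof of Item~\eqref{it:eval-van} of Lemma~\ref{lem:simple-prop}) yields $E_j^* = \tau^* E_i^*\tau^*$ by contravariance. Cancelling the interior $\tau^*\tau^*=\id$ gives
\begin{equation*}
  \cum^{x_i}E_j^* = \tau^*\,\cum^{x_j}E_i^*\,\tau^*.
\end{equation*}
Since $j<i$, the case already settled applies to the pair $(\cum^{x_j},E_i^*)$, so $\cum^{x_j}E_i^* = E_i^*\cum^{x_j}$; substituting this and re-inserting $\tau^*\tau^*=\id$ recovers $E_j^*\cum^{x_i}$, as required.

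I do not anticipate any genuine difficulty: the whole argument is bookkeeping with the hierarchy axioms, and no appeal to Proposition~\ref{prop:1d-subst-rule} is actually needed. The only points to watch are the block membership $E_j\in\matfix{K}{i}$ (valid precisely because $j>i$) and the contravariance of the action, which is what makes $(\tau E_i\tau)^* = \tau^* E_i^*\tau^*$ come out symmetric, thanks to $\tau$ being an involution.
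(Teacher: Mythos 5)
Your proof is correct, but it takes a genuinely different route from the paper's. The paper derives the statement as an actual corollary of Lemma~\ref{lem:after-pivot}: applying that lemma with $M=E_j$ and $g=1$ gives $\cum^{x_i} E_j^* = (1-E_i^*)\,E_j^*\,\cum^{x_i}$, and the unwanted term dies because $E_iE_j=E_jE_i$ together with $E_i^*\cum^{x_i}=0$ (Item~\eqref{it:eval-van} of Lemma~\ref{lem:simple-prop}). You instead split into $i<j$ and $i>j$, handle the first case by the block membership $E_j\in\matfix{K}{i}$ and Item~\eqref{it:mapping-prop} of Definition~\ref{def:hier-Rota-Baxter}, and reduce the second to the first by conjugating with $\tau=(i\:j)$ via Item~\eqref{it:perm-int} of Lemma~\ref{lem:simple-prop} and $\tau E_i\tau=E_j$; all of these steps check out, including the contravariance bookkeeping. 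What your argument buys is economy of means: it lives entirely at the level of the hierarchy axioms of Section~\ref{sec:hier-intdiffalg} and needs neither an admissible coefficient domain nor the tensor-expansion apparatus in which Lemma~\ref{lem:after-pivot} is couched (even though that apparatus degenerates for $g=1$), so the commutativity could already have been recorded among the elementary properties in Lemma~\ref{lem:simple-prop}. What the paper's version buys is uniformity (no case split) and the observation that the statement falls out of the normalization lemma it has just established, at the point in the text where that lemma is available anyway.
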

\begin{proof}
  Applying Lemma~\ref{lem:after-pivot} with~$M=E_j$ and~$g=1$
  yields~$\cum^{x_i} E_j^* = (1-E_i^*) \, E_j^* \, \cum^{x_i}$, which
  implies commutativity because of~$E_i E_j = E_j E_i$
  and~\eqref{it:eval-van} of Lemma~\ref{lem:simple-prop}.
\end{proof}

By Proposition~\ref{prop:1d-subst-rule}, the basic building blocks are
one-dimensional inegrators of the form~$\cum^{x_i} h(x_i) L_i(v)^*$
along a coordinate axis~$x_i$. We shall now investigate how two such
integrators interact with each other, starting with the case of
different coordinate axes. It turns out that we can always reorder
them. In this paper, we choose to put them in \emph{ascending order}.

\begin{proposition}[Ordering of One-Dimensional Integrators]\ \\
  \label{prop:order-int}%
  Let~$(\galg_n, \cum^{x_n})_{n \in \NN}$ be a Rota-Baxter hierarchy
  over a field~$K$, and~$\ogalg$ be an admissible coefficient
  domain for~$\galg$. Then for~$i < j$, vectors~$v \in K^{n-i}$,~$w
  \in K^{n-j}$ and functions~$g,h \in \ogalg_1$ we have
  \begin{equation}
    \label{eq:order-int}
    \cum^{x_j} h(x_j) L_j(w)^* \cum^{x_i} g(x_i) L_i(v)^* = (1-E_j^*)
    \cum^{x_i} \eta'_\mu \, L_i(v')^*
    \cum^{x_j} \eta_\mu \, L_j(w)^*,
  \end{equation}
  where we set~$v' = L_{j-i}^{-1}(w) \, v = \trp{(v_{i+1}, \cdots,
    v_j, v_{j+1} - w_{j+1} v_j, \dots, v_n - w_n v_j)} \in K^{n-i}$,
  as well as~$\eta' = g(x_i) \, \tilde{h}\{i\}$, $\eta =
  \tilde{h}\{j\}$ and~$\tilde{h} = h[e_j - v_j e_i] \in
  \ogalg_{(i,j)}$.
\end{proposition}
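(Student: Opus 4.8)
The plan is to reduce this two-integrator identity to the single-integrator normalization already established in Lemma~\ref{lem:after-pivot}, by first sweeping the outer substitution $L_j(w)^*$ to the far right and then reordering the two integrators into ascending order. First I would push $L_j(w)^*$ past the inner integrator and its coefficient. Since $i<j$, the matrix $L_j(w)$ lies in $\matfix{K}{i}$, so Item~\eqref{it:mapping-prop} of Definition~\ref{def:hier-Rota-Baxter} gives $L_j(w)^*\cum^{x_i}=\cum^{x_i}L_j(w)^*$; moreover $L_j(w)$ fixes $x_i$, so the basic operator relation $M^*f=f[M]\,M^*$ yields $L_j(w)^*g(x_i)=g(x_i)L_j(w)^*$. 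Combining the two eliminants via~\eqref{eq:lmat-desc} in the form $L_i(v)L_j(w)=L_j(w)L_i(v')$ with $v'=L_{j-i}^{-1}(w)\,v$ then turns $L_j(w)^*L_i(v)^*$ into $L_i(v')^*L_j(w)^*$, which produces exactly the vector $v'$ displayed in the statement and leaves $\cum^{x_j}h(x_j)\cum^{x_i}g(x_i)L_i(v')^*L_j(w)^*$.

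Next I would separate the two integrators. As $h(x_j)\in\galg'_{(i)}$ and $g(x_i)\in\galg'_{(j)}$, Item~\eqref{it:trivial-int} of Lemma~\ref{lem:simple-prop} lets me pull each coefficient out of the foreign integral, and the commuting Rota-Baxter operators $\cum^{x_i},\cum^{x_j}$ may then be interchanged. This rewrites the expression as $\cum^{x_i}g(x_i)\,\bigl(\cum^{x_j}h(x_j)L_i(v')^*\bigr)\,L_j(w)^*$, now with $\cum^{x_i}$ on the outside.

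The heart of the argument is the bracketed factor $\cum^{x_j}h(x_j)L_i(v')^*$. Here I would apply Lemma~\ref{lem:after-pivot} with the matrix $M=L_i(v')$: its $j$-th column is $e_j$, so the pivot row is $j$, the lemma returns $\cum^{x_j}$ again, the accompanying transposition is the identity, and the pivot transform is $\tilde g=h[e_j-v_je_i]=\tilde h$, because row $j$ of $L_i(v')$ has entry $v_j$ in column $i$ and $1$ in column $j$. This gives $\cum^{x_j}h(x_j)L_i(v')^*=\tilde h\{i\}_\mu\,(1-E_j^*)\,L_i(v')^*\cum^{x_j}\tilde h\{j\}_\mu$, where I identify the complement sequence $\tilde h\{j\}'$ with $\tilde h\{i\}$ since $\tilde h\in\ogalg_{(i,j)}$ depends only on $x_i,x_j$. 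Substituting this back and commuting $(1-E_j^*)$ to the far left — legitimate because $\cum^{x_i}$ commutes with $E_j^*$ by Corollary~\ref{cor:eval-int-comm} and both $g(x_i)$ and $\tilde h\{i\}_\mu$ are independent of $x_j$ — produces exactly $(1-E_j^*)\cum^{x_i}\eta'_\mu L_i(v')^*\cum^{x_j}\eta_\mu L_j(w)^*$ with $\eta'=g(x_i)\,\tilde h\{i\}$ and $\eta=\tilde h\{j\}$, as claimed.

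The main obstacle is bookkeeping rather than a single deep step. I would need to check that the eliminant commutation~\eqref{eq:lmat-desc} yields precisely $v'=L_{j-i}^{-1}(w)\,v$ with the componentwise form in the statement, and that the pivot transform of Lemma~\ref{lem:after-pivot} produces precisely $\tilde h=h[e_j-v_je_i]$; the delicate part throughout is to track the Einstein summation over $\mu$ on the component sequences $\tilde h\{i\}$, $\tilde h\{j\}$ while moving multiplication operators, the substitutions, and the factor $(1-E_j^*)$ through the two integrators without disturbing their (now ascending) order.
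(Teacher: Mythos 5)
Your proposal is correct and follows essentially the same route as the paper's proof: commute $L_j(w)^*$ past $\cum^{x_i}$ and $g(x_i)$, conjugate the eliminants via~\eqref{eq:lmat-desc} to produce $v'$, swap the two integrators using Item~\eqref{it:trivial-int} and commutativity, apply Lemma~\ref{lem:after-pivot} to $\cum^{x_j} h(x_j) L_i(v')^*$, and finally push $(1-E_j^*)$ to the front via Corollary~\ref{cor:eval-int-comm} and straightness. The only cosmetic difference is that the paper derives $\cum^{x_i} L_j(w)^* = L_j(w)^*\cum^{x_i}$ from Lemma~\ref{lem:after-pivot} with $g=1$ together with $E_i^*\cum^{x_i}=0$, whereas you invoke axiom~\eqref{it:mapping-prop} directly; both are valid.
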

\begin{proof}
  First note that Lemma~\ref{lem:after-pivot} with~$g=1$ yields
  $\cum^{x_i} L_j(w)^* = L_j(w)^* \cum^{x_i} - \big(L_j(w) \,
  E_i\big)^* \cum^{x_i}$, but the last summand vanishes
  by~\eqref{it:eval-van} of Lemma~\ref{lem:simple-prop} since~$L_j(w)
  \, E_i = E_i \, L_j(w)$. Hence
  \begin{align*}
    \cum^{x_j} & h(x_j) L_j(w)^* \cum^{x_i} g(x_i) L_i(v)^* =
    \cum^{x_j} h(x_j) \cum^{x_i} L_j(w)^* g(x_i) L_i(v)^*\\
    &= \cum^{x_i} g(x_i) \cum^{x_j} h(x_j) L_i(v')^* L_j(w)^* =
    \cum^{x_i} g(x_i) \, \tilde{h}\{i\}_\mu \, (1-E_j^*) \,
    L_i(v')^* \cum^{x_j} \tilde{h}\{j\}_\mu \, L_j(w)^*\\
    &= (1-E_j^*) \cum^{x_i} g(x_i) \, \tilde{h}\{i\}_\mu
    L_i(v')^* \cum^{x_j} \tilde{h}\{j\}_\mu \, L_j(w)^*
  \end{align*}
  where the second step follows from~\eqref{eq:lmat-desc} and
  Item~\eqref{it:trivial-int} of Lemma~\ref{lem:simple-prop}, from
  $L_j(w)_{\lrcorner i} = I_i$ and the commutativity of Rota-Baxter
  operators, the third from Lemma~\ref{lem:after-pivot}
  with~$\tilde{h} = L_i(v')_{j\bullet}^* h$ and~$L_i(v')_{j\bullet} =
  -v_j' e_i + e_j = e_j - v_j e_j$, the last step from
  Corollary~\ref{cor:eval-int-comm} and straightness.
\end{proof}

\begin{lemma}
  \label{lem:coalesc}%
  Let~$(\galg_n, \cum^{x_n})_{n \in \NN}$ be a Rota-Baxter hierarchy
  over a field~$K$, and~$\ogalg$ an admissible coefficient domain
  for~$\galg$. Then for~$i \in \NN$, $h \in \ogalg_1$ and~$w \in
  K^{n-i} \setminus \{0\}$ we have
  \begin{equation}
    \label{eq:coalesc-int}
    \begin{aligned}
      \cum^{x_i} h(x_i) \, L_i(w)^* \cum^{x_i} = L_k^{-1}(w')^*
      \, \eta'[k \; n\!+\!1]_\mu \, \Big( L_i(\bar{w})^* \,
      \cum^{x_i} - \cum^{x_i} L_i(\bar{w})^* \Big) \cum^{x_k}
      \eta_\mu \, L_k(w')^*
    \end{aligned}
  \end{equation}
  as an identity on~$\galg_n$, where~$k = \min \{j > i \mid w_j \neq 0
  \}$ and~$\eta' = \tilde{h}\{n+1\}$, $\eta = \tilde{h}\{k\}$ with
  \begin{equation*}
    \tilde{h} := h[e_k/w_k-e_{n+1}/w_k]/w_k \in \galg_{(k,n+1)}.
  \end{equation*}
  Moreover, for~$w = \trp{(w_{i+1}, \dots, w_n)}$ we have defined
  \begin{equation*}
    \bar{w} = w_k e_{k-i} \in K^{n-i}, \qquad
    w' = (w_{k+1}, \dots, w_n)/w_k \in K^{n-k}
  \end{equation*}
  as auxiliary vectors.
\end{lemma}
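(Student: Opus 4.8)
The plan is to reduce the identity to the coefficient-carrying vertical substitution rule of Lemma~\ref{lem:slack}, after first normalizing the pivot entry~$w_k$ to~$1$ by a scaling conjugation, and then to convert the single auxiliary coefficient produced by that lemma into the component-sequence form~$\eta',\eta$ via tensor expansion.

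First I would normalize the pivot. Writing~$S := \mmt{k}{w_k}$ for the scaling of~$x_k$ and~$\hat{w}$ for the vector obtained from~$w$ by replacing its~$k$-th entry~$w_k$ with~$1$, a short matrix computation gives~$L_i(w) = S\,L_i(\hat{w})\,S^{-1}$, since conjugation by~$S$ rescales precisely the entry in row~$k$ of the~$i$-th column while leaving the others fixed. Because~$i < k$, the scalings~$S^*,(S^{-1})^*$ lie in~$\matfix{K}{k-1}^* \subseteq \matfix{K}{i}^*$ and hence commute with~$\cum^{x_i}$ by Item~\eqref{it:mapping-prop} of Definition~\ref{def:hier-Rota-Baxter}; they also commute with the multiplication operator~$h(x_i)$ because~$h$ depends only on~$x_i$ and~$S$ fixes~$x_i$. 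Pulling~$(S^{-1})^*$ to the far left and~$S^*$ to the far right therefore yields
\begin{equation*}
  \cum^{x_i} h(x_i)\,L_i(w)^* \cum^{x_i}
  = (S^{-1})^*\,\big(\cum^{x_i} h(x_i)\,L_i(\hat{w})^* \cum^{x_i}\big)\,S^*,
\end{equation*}
whose inner factor is exactly the situation of Lemma~\ref{lem:slack} with integration direction~$x_i$, pivot at~$x_k$ (legitimate since~$i<k$), coefficient~$h$, and~$v' = \trp{(w_{k+1},\dots,w_n)}$.

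Applying Lemma~\ref{lem:slack} and then distributing the outer conjugation~$X \mapsto (S^{-1})^* X S^*$ over the resulting product (inserting~$S^*(S^{-1})^*$ between consecutive factors), I would evaluate the factors one kind at a time. Conjugation rescales the eliminants, giving~$(S^{-1})^* L_k^{\mp}(v')^* S^* = L_k^{\mp}(w')^*$ with~$w' = v'/w_k$; it turns the unit-pivot commutator into~$\big[L_i(\bar{w})^*,\cum^{x_i}\big]$ with~$\bar{w} = w_k e_{k-i}$, again using that~$S$ commutes with~$\cum^{x_i}$; and by the diagonal rule~\eqref{eq:scaling} it produces the global factor~$(S^{-1})^*\cum^{x_k}S^* = w_k^{-1}\cum^{x_k}$, supplying the overall~$1/w_k$. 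The remaining factors---the back-substitution~$(I_n \oplus e_{k})^*$ and the slack coefficient~$\bar{h} = (e_k - e_{n+1})^* h$---recombine under the conjugation into the single two-variable function~$\tilde{h} = h[(e_k - e_{n+1})/w_k]/w_k \in \galg_{(k,n+1)}$.

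It remains to pass from this single coefficient to the component-sequence form. Here I would invoke tensor expansion~\eqref{eq:tensor-expn} to write~$\tilde{h} = \eta'_\mu\,\eta_\mu$ with~$\eta' = \tilde{h}\{n+1\} \in \ogalg_{(n+1)}$ and~$\eta = \tilde{h}\{k\} \in \ogalg_{(k)}$. Since~$\eta'_\mu$ depends only on the slack variable~$x_{n+1}$, it commutes with the pivot commutator, with~$\cum^{x_k}$, and with~$L_k(w')^*$, so it may be carried to the front; the (conjugated) back-substitution then acts on~$\eta'_\mu$ alone, collapsing to the transposition~$[k\:n+1]$ and producing~$\eta'[k\:n+1]_\mu$, while the~$x_k$-part~$\eta_\mu$ stays immediately after~$\cum^{x_k}$. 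This reassembles exactly the right-hand side of~\eqref{eq:coalesc-int}. The main obstacle is the precise bookkeeping in these last two paragraphs: confirming that the scaling conjugation converts the slack-lemma data ($v'$, the unit pivot,~$\bar{h}$, and~$(I_n\oplus e_k)^*$) into the asserted data ($w'$,~$\bar{w}$,~$\tilde{h}$, and the transposition~$[k\:n+1]$) with exactly the right powers of~$w_k$, and that localizing the frozen variable~$x_{n+1}$ legitimately replaces the global back-substitution by a transposition acting only on~$\eta'_\mu$.
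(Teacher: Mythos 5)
Your proposal is correct and rests on the same key ingredient as the paper's proof---Lemma~\ref{lem:slack} applied to a unit-pivot eliminant, followed by tensor expansion of the resulting two-variable coefficient---but it normalizes the pivot by a different conjugation. The paper first reduces to~$i=1$ and writes~$L_x(w)^* = w_k^*\,L_x(e_{k-1}+\trp{(0,w')})^*\,(1/w_k)^*$, i.e.\@ it conjugates by the scaling~$\mmt{1}{w_k}$ of the \emph{integration} variable; this forces two applications of the diagonal rule~\eqref{eq:scaling} (whence the intermediate factor~$w_k^{-2}$) and pre-transforms the coefficient to~$h[\mmt{1}{1/w_k}]$ before the slack lemma is invoked. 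You conjugate instead by~$S=\mmt{k}{w_k}$, a scaling of the \emph{pivot} variable, which commutes with~$\cum^{x_i}$ and with~$h(x_i)$, so the only scaling--integral interaction is~$(S^{-1})^*\cum^{x_k}S^*=w_k^{-1}\cum^{x_k}$; this is arguably the cleaner bookkeeping, and it also lets you skip the initial reduction to~$i=1$. The one step that must be checked carefully---and which you correctly single out as the remaining obstacle---is the final recombination: conjugation by~$S$ sends the back-substitution~$J=I_n\oplus e_k$ to~$I_n\oplus(e_k/w_k)$ and the slack coefficient~$\bar h$ to~$h[e_k/w_k-e_{n+1}]$ (with \emph{no}~$1/w_k$ on~$e_{n+1}$), so neither factor individually equals the asserted data~$\tilde h$ and~$(k\;n{+}1)$. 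They do match in combination: writing~$w_k^{-1}\,h[e_k/w_k-e_{n+1}]=\tilde h[\mmt{n+1}{w_k}]=\eta'_\mu[\mmt{n+1}{w_k}]\,\eta_\mu$ and carrying the~$x_{n+1}$-components to the front, the matrix identity~$\mmt{n+1}{w_k}\,\bigl(I_n\oplus(e_k/w_k)\bigr)=J$ restores the unscaled back-substitution, which acts on~$\eta'_\mu\in\ogalg_{(n+1)}$ as the transposition~$(k\;n{+}1)$ and trivially on all remaining factors over~$\galg_n$ by straightness. With that verification written out, your argument is complete and yields exactly~\eqref{eq:coalesc-int}.
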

\begin{proof}
  It suffices to assume~$i=1$; the general case follows by conjugation
  with~$\tau^* = (i \; 1)^*$ since~$\tau L_k(w') \tau = L_k(w')$
  and~$\tau L_1(\bar{w}) \tau = L_i(\bar{w})$. We write~$J := I_n
  \oplus e_k$ as an abbreviation. Using the relation~$L_x(w)^* = w_k^*
  \, L_x(e_{k-1} + \trp{(0, w')}) \, (1/w_k)^*$, we obtain
  \begin{align*}
    & \cum^x h(x) L_x(w)^* \cum^x = w_k^{-2} \, w_k^* \, \cum^x
    h[d_{1,1/w_k}] \, L_x(e_{k-1} + \trp{(0, w')})^* \cum^x (\tfrac{1}{w_k})^*\\
    &= w_k^{-1} \, w_k^* \, L_k^{-1}(w')^* J^* \Big( L_x(e_{k-1})^* \cum^x
    - \cum^x L_x(e_{k-1})^* \Big) \cum^{x_k} \tilde{h} \, L_k(w')^* \, (\tfrac{1}{w_k})^*\\
    &= w_k^{-1} \, L_k^{-1}(w')^* J^* \Big(w_k^* \, L_x(e_{k-1})^*
    \cum^x - w_k^* \, \cum^x L_x(e_{k-1})^* \Big) \cum^{x_k} \tilde{h}
    \, L_k(w')^* \, (\tfrac{1}{w_k})^*\\
    &= L_k^{-1}(w')^* J^* \Big(L_x(\bar{w})^* \, \cum^x  - \cum^x
    L_x(\bar{w})^* \Big) \cum^{x_k} \tilde{h} \, w_k^* L_k(w')^* \, (\tfrac{1}{w_k})^*\\
    &= L_k^{-1}(w')^* J^* \Big(L_x(\bar{w})^* \, \cum^x
    - \cum^x L_x(\bar{w})^* \Big) \cum^{x_k} \tilde{h} \, L_k(w')^*
  \end{align*}
  where the first equality uses~\eqref{eq:scaling} twice, the second
  employs~\eqref{eq:vert-subst}, the third the commutation of~$w_k
  e_{11} + e_{22} + \cdots e_{nn}$ with~$L_k^{-1}(w')$ and~$J$, the
  fourth applies again~\eqref{eq:scaling} together with~$w_k^* \,
  L_x(e_{k-1})^* = L_x(\bar{w})^* w_k^*$, and the last~$w_k^* \,
  L_k(w')^* (1/w_k)^* = L_k(w')^*$. Finally, \eqref{eq:coalesc-int}
  follows by substituting the expansion of~$\tilde{h}$, noting that
  the components~$\eta' = \tilde{h}\{n+1\} \in \galg_{(n+1)}^r$
  commute with~$\cum^x$ and~$L_x(\bar{w})^*$ and that~$J^*$ acts
  on them as~$(k \; n\!+\!1)^*$.
\end{proof}

For stating the last operator relation, we need the following
technical result for \emph{composing two substitutions}. A general
result could be formulated but for our present purposes the following
special case will be sufficient, where the first substitution is~$x_1
\mapsto a x_k + b x_l$, and the second is~$x_k \mapsto c x_i + d x_j$.

\begin{lemma}
  \label{ref:comp-subs}
  Let~$(\ogalg_n, \cum^{x_n})_{n \in \NN}$ be a separated Rota-Baxter
  hierarchy over~$K$. Fix~$h \in \ogalg_1$, indices~$i < j \le k < l$,
  and scalars~$a,b,c,d \in K$. Assume~$h[a e_k + b e_l] = u$ is
  expanded into~$u = \eta'_\mu \eta_\mu$ with~$\eta = u\{k\}
  \in \ogalg_{(k)}^r$ and~$\eta' = u\{l\} \in
  \ogalg_{(l)}^r$. Setting~$\eta_\mu = u_\mu(x_k)$, expand
  also~$u_\mu[c e_i + d e_j] = \tilde{u}_\mu$ with~$\gamma'_\mu =
  \tilde{u}_\mu\{j\} \in \ogalg_{(j)}^{s_\mu}$ and~$\gamma_\mu =
  \tilde{u}\{i\} \in \ogalg_{(i)}^{s_\mu}$ into~$\tilde{u}_\mu =
  \gamma'_{\mu\nu} \gamma_{\mu\nu}$, where summation is only
  over~$\nu$. Then in~$\ogalg_{(i,j,l)}$ we have the standard tensor
  expasion
  \begin{equation}
    \label{eq:comp-subs}
    h[ace_i + ade_j + be_l] = \eta'_\mu \, \gamma'_{\mu\nu} \, \gamma_{\mu\nu},
  \end{equation}
  with the sum over~$(\mu, \nu) \in \{1, \dots, s_1 \} \times \cdots
  \times \{1, \dots, s_r\} \cong \{ 1, \dots, s_1 + \cdots + s_r\}$.
\end{lemma}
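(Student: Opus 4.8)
The plan is to realize the triple substitution $h[ace_i+ade_j+be_l]$ as an honest composition of the two given substitutions and then push the first expansion through a $K$-algebra endomorphism. First I would introduce the matrix $N \in \matmon{K}{n}$ effecting the single change of variables $x_k \mapsto cx_i + dx_j$ while fixing every other coordinate, so that $N$ has $k$-th row $ce_i+de_j$ and $m$-th row $e_m$ for $m \neq k$. A one-line row-vector computation gives $(ae_k+be_l)\,N = ace_i + ade_j + be_l$, since the $k$-th row of $N$ contributes $a(ce_i+de_j)$ and the $l$-th row contributes $be_l$ (this holds uniformly for $i<j\le k<l$, including the boundary case $j=k$). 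Invoking the composition law $f[M][\tilde M] = f[M\tilde M]$ together with $h[ae_k+be_l]=u$ then yields
\begin{equation*}
  h[ace_i+ade_j+be_l] = h[(ae_k+be_l)N] = u[N] = N^* u.
\end{equation*}

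Next I would expand $N^* u$ using that $N^*$ is a homomorphism of $K$-algebras and the given tensor expansion $u = \eta'_\mu\,\eta_\mu$, obtaining $N^* u = (N^*\eta'_\mu)(N^*\eta_\mu)$ with summation over $\mu$. The two factors are then identified separately. Since $\eta'_\mu \in \ogalg_{(l)}$ and $N$ fixes $x_l$ (because $l \neq k$), straightness of the action gives $N^*\eta'_\mu = \eta'_\mu$. For the other factor, I write $\eta_\mu = u_\mu(x_k) = (1\:k)^* u_\mu$; contravariance of the action gives $N^*(1\:k)^* = \big((1\:k)N\big)^*$, and because $u_\mu \in \ogalg_1$ depends only on $x_1$, straightness reduces the computation to the first row of $(1\:k)N$. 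That first row is exactly $ce_i+de_j$ (the composite sends $x_1 \mapsto x_k \mapsto cx_i+dx_j$), whence $N^*\eta_\mu = u_\mu[ce_i+de_j] = \tilde u_\mu$.

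Substituting the second given expansion $\tilde u_\mu = \gamma'_{\mu\nu}\gamma_{\mu\nu}$ (summation over $\nu$) then produces
\begin{equation*}
  h[ace_i+ade_j+be_l] = \eta'_\mu\,\gamma'_{\mu\nu}\,\gamma_{\mu\nu},
\end{equation*}
and it remains only to confirm this is the \emph{standard} tensor expansion in $\ogalg_{(i,j,l)}$. The factors $\gamma_{\mu\nu}\in\ogalg_{(i)}$, $\gamma'_{\mu\nu}\in\ogalg_{(j)}$ and $\eta'_\mu\in\ogalg_{(l)}$ occupy three distinct variable slots, which is guaranteed by $i<j\le k<l$ forcing $i<j<l$; and the combined index $(\mu,\nu)$ ranges over the pairs with $1\le\mu\le r$ and $1\le\nu\le s_\mu$, a set of cardinality $s_1+\cdots+s_r$, exactly as stated. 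The hard part will be the identification $N^*\eta_\mu=\tilde u_\mu$, where one must carefully reconcile the matrix action $N^*$ (substituting the $k$-th coordinate) with the univariate substitution notation $u_\mu[ce_i+de_j]$; this is precisely where the permutation identity $\eta_\mu=(1\:k)^*u_\mu$, contravariance, and straightness (reducing to the first row of the composite matrix) are needed. Everything else is routine bookkeeping of component sequences under a homomorphism.
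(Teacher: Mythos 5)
Your proposal is correct and follows essentially the same route as the paper: factor the target substitution as $(ae_k+be_l)\cdot N$ with $N = I_{k-1}\oplus(ce_i+de_j)$, use that $N^*$ is an algebra homomorphism fixing $\eta'_\mu\in\ogalg_{(l)}$, and identify $N^*\eta_\mu$ with $\tilde u_\mu$ by a straightness argument reducing to a single matrix row (you apply straightness to $u_\mu\in\ogalg_1$ via the first row of $(1\:k)N$, the paper equivalently to $\eta_\mu\in\ogalg_{(k)}$ via the $k$-th row). No gaps.
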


\begin{proof}
  Choosing~$n \ge l$, write~$M = a e_k + b e_l \in \matmon{K}{n}$
  and~$\tilde{M} = I_{k-1} \oplus (c e_i + d e_j) \in \matmon{K}{n}$ in
  the sense of the embedding. Operating~$\tilde{M}^*$ on~$h[M] =
  \eta'_\mu \eta_\mu$ yields
  \begin{equation*}
    h[ace_i + ade_j + be_l] = h[M \tilde{M}] = \eta'_\mu \,
    \eta_\mu[\tilde{M}]
  \end{equation*}
  since~$\tilde{M}^*$ is an algebra homomorphism which
  leaves~$\eta'_\mu \in \ogalg_{(l)}$ invariant (this follows easily
  from straightness and~$\tilde{M}_{l\bullet} = e_l$). But~$u_\mu =
  \eta_\mu[1 \: k]$ implies~$\tilde{u}_\mu = \eta_\mu[\tilde{M}]$
  since the matrices~$\tilde{M}$ and~$(1 \: k) \, \big(
  (ce_i+de_j)\oplus I_{n-1} \big)$ have the same~$k$-th row (again the
  conclusion follows by straightness). Now~\eqref{eq:comp-subs}
  follows by substituting the tensor expansion of~$\tilde{u}_\mu$.
\end{proof}

We can now state the last important relation, which will enable us to
put one-dimensional integrators into \emph{strictly ascending order}
since two adjacent integrators along the same coordinate axes can be
reduces as the following propostion makes precise.

\begin{proposition}[Coalescence of One-Dimensional Integrators]\ \\
  \label{prop:coalesc}%
  Let~$(\galg_n, \cum^{x_n})_{n \in \NN}$ be a Rota-Baxter hierarchy
  over a field~$K$, and~$\ogalg$ an admissible coefficient domain
  for~$\galg$. Then for~$i \in \NN$, and~$v,w \in K^{n-i}$ with~$w
  \neq 0$ we have on~$\galg_n$ the identity
  \begin{equation}
    \label{eq:contr-int}
    \begin{aligned}
      \cum^{x_i} & h(x_i) \, L_i(w)^* \cum^{x_i} g(x_i) \, L_i(v)^* =
      L_k^{-1}(w')^* \chi''[k \; n\!+\!1]_\mu \times {}\\
      & {} \times \Big( L_i(\bar{w})^* \, \cum^{x_i}
      g(x_i) \, \chi'_\mu \, L_i(v')^* \,
      - \cum^{x_i} g(x_i) \, \chi'_\mu \, L_i(v'+\bar{w})^* \Big) \cum^{x_k}
      \chi_\mu L_k(w')^*,
    \end{aligned}
  \end{equation}
  where we have set
  \begin{align*}
    \tilde{h} &= h[-(v_k'/w_k)e_i + e_j/w_k - e_{n+1}/w_k]\\
    v' &= L_{k-i}^{-1}(w') \, v =
    \trp{(v_{i+1}, \dots, v_k, v_{k+1} - w_{k+1}' v_k, \dots, v_n - w_n'
      v_k)} \in K^{n-i}.
  \end{align*}
  with component vectors~$\chi'' = \tilde{h}\{n+1\}$, $\chi' =
  \tilde{h}\{i\}$, $\chi = \tilde{h}\{k\}$, and~$w'$, $\bar{w}$ from
  Lemma~\ref{lem:coalesc}.
\end{proposition}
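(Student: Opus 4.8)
The plan is to bootstrap from Lemma~\ref{lem:coalesc}, which is precisely \eqref{eq:contr-int} with the inner integrator stripped of its coefficient $g(x_i)$ and its substitution $L_i(v)^*$. As in that lemma, I would first reduce to $i=1$ by conjugating the whole identity with $\tau^* = (i\;1)^*$; since $k>i$ the transposition $\tau$ fixes $k$, so $L_k(w')$, $\cum^{x_k}$ and the slack transposition $(k\;n\!+\!1)$ are untouched while $L_i(v),L_i(w),L_i(\bar w)$ pass to their $x_1$-analogues. I would then regard Lemma~\ref{lem:coalesc} as an operator identity on $\galg_n$ and post-compose it on the right with $g(x_i)\,L_i(v)^*$, which maps $\galg_n$ into itself. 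This presents the coalesced right-hand side of Lemma~\ref{lem:coalesc} followed by the trailing factor $g(x_i)\,L_i(v)^*$, so the entire problem reduces to transporting that factor leftward into the slots prescribed by \eqref{eq:contr-int}.

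The transport is carried out by the eliminant calculus and the triviality properties of the action. First, $L_k(w')^*$ commutes with $g(x_i)$ by straightness (as $L_k(w')$ fixes $x_i$), and the reordering rule \eqref{eq:lmat-desc} gives $L_k(w')^*\,L_i(v)^* = L_i(v')^*\,L_k(w')^*$ with $v'=L_{k-i}^{-1}(w')\,v$, which is exactly the stated $v'$. Keeping the coefficient $\tilde h_L := h[e_k/w_k - e_{n\!+\!1}/w_k]/w_k$ of Lemma~\ref{lem:coalesc} unexpanded, it commutes with $g(x_i)$ and absorbs the passing substitution as $\tilde h_L\,L_i(v')^* = L_i(v')^*\,\tilde h$ with $\tilde h := \tilde h_L[L_i(-v')]$; this composition of the two substitutions acting on $h$ is the situation of Lemma~\ref{ref:comp-subs}, and it produces the three-term coefficient $\tilde h = h[-(v_k'/w_k)e_i + e_k/w_k - e_{n\!+\!1}/w_k]/w_k$ together with the component sequences $\chi''=\tilde h\{n\!+\!1\}$, $\chi'=\tilde h\{i\}$, $\chi=\tilde h\{k\}$. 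Pulling $g(x_i)$ and $\chi'_\mu$—both depending only on $x_i$—through $\cum^{x_k}$ by Item~\eqref{it:trivial-int} of Lemma~\ref{lem:simple-prop}, distributing them into the commutator $[L_i(\bar w)^*,\cum^{x_i}]$, and using $L_i(\bar w)^*\,L_i(v')^* = L_i(v'+\bar w)^*$ in the second summand, reproduces verbatim the two-term bracket of \eqref{eq:contr-int}, with $L_k^{-1}(w')^*\,\chi''[k\;n\!+\!1]_\mu$ in front and $\cum^{x_k}\,\chi_\mu\,L_k(w')^*$ trailing.

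The step I expect to be the genuine obstacle is commuting $L_i(v')^*$ past the coalesced integrator $\cum^{x_k}$. Applying Lemma~\ref{lem:after-pivot} to $M=L_i(v')$, whose $k$-th column is $e_k$, yields $\cum^{x_k}\,L_i(v')^* = (1-E_k^*)\,L_i(v')^*\,\cum^{x_k}$, so the passage spawns a spurious term carrying $-E_k^*\,L_i(v')^*\,\cum^{x_k}\,\tilde h$. The heart of the proof is to see that this term vanishes: since $L_i(v')\,E_k$ has vanishing $k$-th column, $E_k^*\,L_i(v')^* = (L_i(v')\,E_k)^*$ maps into $\galg_{(k)}'$ by Item~\eqref{it:zero-col} of Lemma~\ref{lem:simple-prop}, so the spurious contribution lands in the $x_k$-independent subalgebra. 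Because $\bar w = w_k\,e_{k-i}$, the eliminant $L_i(\bar w)$ moves only the coordinate $x_k$, hence $L_i(\bar w)^*$ is the identity on $\galg_{(k)}'$, while $\cum^{x_i}$ preserves $\galg_{(k)}'$; therefore the commutator $[L_i(\bar w)^*,\cum^{x_i}]$ annihilates everything in $\galg_{(k)}'$ and kills the spurious term, leaving exactly \eqref{eq:contr-int}. Throughout I would keep in mind that every identity is asserted only on $\galg_n$, the slack variable $x_{n+1}$ serving merely to carry the coefficient, just as in Lemma~\ref{lem:slack} and Lemma~\ref{lem:coalesc}.
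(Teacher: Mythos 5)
Your proposal is correct and follows essentially the same route as the paper's proof: right-multiply Lemma~\ref{lem:coalesc} by $g(x_i)\,L_i(v)^*$, swap the eliminants via~\eqref{eq:lmat-desc} to produce $L_i(v')^*L_k(w')^*$, push the resulting factor through $\cum^{x_k}$ with Lemma~\ref{lem:after-pivot}, identify the composed coefficient and its component sequences with Lemma~\ref{ref:comp-subs}, and show the spurious $E_k^*$ term vanishes. The only (harmless) deviation is in that last step, where the paper moves $E_k^*$ leftward and uses $L_i(\bar w)^*E_k^*=E_k^*$ together with Corollary~\ref{cor:eval-int-comm}, whereas you argue via Item~\eqref{it:zero-col} of Lemma~\ref{lem:simple-prop} that the term lands in $\galg_{(k)}'$, on which the commutator $\big[L_i(\bar w)^*,\cum^{x_i}\big]$ acts as zero --- the two arguments are equivalent.
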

\begin{proof}
  Multiplying~\eqref{eq:coalesc-int} by~$g(x_i) \, L_i(v)^*$ from the
  right, the right-hand side of~\eqref{eq:contr-int} becomes
  \begin{equation*}
    L_k^{-1}(w')^*
    \, \eta'[k \; n\!+\!1]_\mu \, \Big( L_i(\bar{w})^* \,
    \cum^{x_i} - \cum^{x_i} L_i(\bar{w})^* \Big) \cum^{x_k}
    \eta_\mu \, L_k(w')^* g(x_i) \, L_i(v)^*.
  \end{equation*}
  Since~$L_k(w')^*$ commutes with~$g(x_i)$ and~\eqref{eq:lmat-desc}
  swaps eliminants, the right-hand factor is
  \begin{align*}
    \cum^{x_k} \eta_\mu \, & L_k(w')^* g(x_i) \, L_i(v)^*
    = g(x_i) \, \cum^{x_k} \eta_\mu \, L_i(v')^* L_k(w')^*\\
    &= g(x_i) \, \gamma'_{\mu\nu} (1-E_k^*) \, L_i(v')^* \cum^{x_k}
    \gamma_{\mu\nu} \, L_k(w')^*,
  \end{align*}
  where the last expression (summation only over~$\nu$) follows from
  Lemma~\ref{lem:after-pivot} by writing $\eta_\mu = u_\mu(x_k)$ and
  defining $\tilde{u}_\mu = u_\mu[e_k - v_k' e_i]$ with component
  vectors~$\gamma'_\mu = \tilde{g}\{i\}$, $\gamma_\mu =
  \tilde{g}\{k\}$. By Lemma~\ref{eq:comp-subs} we can determine~$s \in
  \NN$ and make the identifications~$\eta' = \chi'' \in
  \ogalg_{(n+1)}^s$, $\gamma' = \chi' \in \ogalg_{(i)}^s$, and~$\chi =
  \gamma \in \ogalg_{(k)}^s$. Comparing the above equations
  with~\eqref{eq:contr-int} and noting that~$g(x_i) \, \chi'_\mu \in
  \ogalg_{(i)}$ commutes with~$L_i(\bar{w})^*$, it is clear that it
  suffices now to prove that
  \begin{equation*}
    \Big( L_i(\bar{w})^* \cum^{x_i} - \cum^{x_i} L_i(\bar{w})^* \Big)
    g(x_i) \, \chi'_\mu \, E_k^* = \Big( L_i(\bar{w})^* \cum^{x_i} E_k^* -
    \cum^{x_i} L_i(\bar{w})^* E_k^* \Big) g(x_i) \, \chi'_\mu
  \end{equation*}
  vanishes, where we have used that~$g(x_i) \, \chi'_\mu E_k^* = E_k^*
  \, g(x_i) \, \chi'_\mu$ by~$g(x_i) \, \chi'_\mu \in
  \ogalg_{(i)}$. But the parenthesis on the right-hand side above is
  zero by~$L_i(\bar{w})^* E_k^* = E_k^*$ and
  Corollary~\ref{cor:eval-int-comm}.
\end{proof}

We have now collected all essential operator relations for building up
the partial integral operators as a quotient ring modulo these
relations.

\subsection{Construction via Quotient}
\label{ssec:const-quot}

The \emph{raw material} for building the operator ring over an
admissible coefficient domain~$\ogalg$ for some Rota-Baxter
hierarchy~$\galg$ comes from the three base algebras: the coefficient
algebra~$\ogalg$; the substitutions from~$\matma$ in the form of a
monoid algebra, and the commuting Rota-Baxter operators (a polynomial
ring in infinitley many variables). These three algebras are put
together by a free product, then we take a suitable quotient modulo
the ideal of operator relations.

We denote the \emph{free product}~\cite[\S III.6]{Cohn1981}
\cite[\S29]{Graetzer2008} \cite[p.~85]{Jacobson1989} of two
$K$-algebras~$A$ and~$B$ by $A \amalg_K\! B$. Rougly speaking, this
consists of $K$-linear combinations of alternating products of
elements of~$A$ and~$B$; for a short introduction see
also~\cite[Example~3.28]{Franz2006} where~$K$ is the complex field
(though the construction works for general fields) or the historical
reference~\cite{Hungerford1968}. One has natural isomorphisms $A
\amalg_K\! (B \amalg_K\!  C) \cong (A \amalg_K\! B) \amalg_K\! C$ so
that we may employ the notation~$A \amalg_K\! B \amalg_K\!
C$. Incidentally, note that the free product is commutative in the
sense that~$A \amalg_K\! B \cong B \amalg_K\!  A$, despite having a
noncommutative multiplication.

Recall that~$\matrices := \substmon{K}$ denotes the \emph{substitution
  monoid} over~$K$, which contains the semigroup~$\nonzero{\matrices}
= \substmon{K} \setminus \{ I \}$. Since its action on~$\galg$
and~$\ogalg \le \galg$ is contravariant, we form the opposite monoid
algebra~$\matma^*$ and the associated opposite semigroup
algebra $\matsa^*$. Its basis elements will be written as~$M^*$
with~$M \in \matma$. For avoiding confusion, we shall write the
\emph{integrals} in the prospective operator ring as~$A_n \; (n \in
\NN)$; they form the commutative polynomial ring~$K[A] = K[A_n \mid n
> 0]$. After setting up an appropriate action~$\odot$ on the given
Rota-Baxter hierarchy~$(\galg_n, \cum^{x_n})_{n \in \NN}$, we shall
have~$A_n \odot f = \cum^{x_n} f$ and~$M^* \odot f = f[M]$ and~$f
\odot g = fg$.

\def\mc{\multicolumn{2}{|l|}}
\begin{table}[h]
  \centering
  \renewcommand{\baselinestretch}{1.5}
  \normalsize
  \begin{equation*}
  \begin{array}[h]{|ll|}
    \hline
    M^* g \cong g[M] \, M^* & M^* \! A_i \cong 0 \quad\text{if $M_{i\bullet} = 0$}\\
    A_j \, g(x_i) \cong g(x_i) A_j & A_i \, g(x_j) \cong g(x_j) A_i\\
    A_j \, g(x_j) M^* \cong
    M_{ij}^{-1} \, \gamma'_\mu (1-E_j^*) \tilde{M}^*
    \cum^{x_i} \gamma[\mmt{j}{1/M_{ij}}]_\mu \, L_i(l)^* &
    \text{if $i := \min \{ k \mid M_{kj} \neq 0 \} \neq \infty$}\\
    A_j \, g(x_j) M^* \cong \big(\cum^{x_j} g(x_j)\big) M^* &
    \text{if $i := \min \{ k \mid M_{kj} \neq 0 \} = \infty$}\\
        \mc{A_j h(x_j) L_j(w)^* A_i \, g(x_i) L_i(v)^* \cong (1-E_j^*)
    A_i \, \eta'_\mu \, L_i(v')^* A_j \, \eta_\mu \, L_j(w)^*}\\
    \mc{A_i \, h(x_i) \, L_i(w)^* A_i \, g(x_i) \, L_i(v)^* \cong
      L_k^{-1}(w')^* \chi''[k \; n\!+\!1]_\mu \times {}}\\
    \mc{\qquad \times \Big( L_i(\bar{w})^* \, \cum^{x_i}
    g(x_i) \, \chi'_\mu \, L_i(v')^* \, - \cum^{x_i} g(x_i) \,
    \chi'_\mu \, L_i(v'+\bar{w})^* \Big) \cum^{x_k} \chi_\mu
    L_k(w')^*}\\
    \mc{A_j \, g(x_j) A_j \cong \big(\cum^{x_j} g(x_j)\big) A_j
      - A_j \big(\cum^{x_j} g(x_j)\big)}\\
    \hline
  \end{array}
  \end{equation*}
  \caption{Relations for Partial Integral Operators and Substitutions}
  \label{fig:red-rules}
\end{table}

\begin{definition}
  \label{def:ring-partial-intops}
  Let~$(\ogalg, \cum)$ be a separated Rota-Baxter hierarchy over a
  field~$K$. Then the \emph{ring of partial integral operators}
  over~$\ogalg$ is defined as the quotient $K$-algebra
  \begin{equation}
    \label{eq:free-prod}
    \ogalg[\cum] := \ogalg \amalg_K\! \matma^* \amalg_K\! K[A] \Bigm/
    \mathord{\cong}
  \end{equation}
  with the congruence~$\cong$ generated by Table~\ref{fig:red-rules}
  for the implicit ranges~$i,j \in \NN$ such that~$i<j$ and~$M \in
  \matma$, and~$g, h \in \ogalg_1$. For all other notations and
  conditions, we refer to the Propositions~\ref{prop:1d-subst-rule},
  \ref{prop:order-int}, and~\ref{prop:coalesc}.
\end{definition}

Note that Table~\ref{fig:red-rules} is meant to contain various
\emph{implied congruences} as special instances of the ones given,
obtained by substituting~$g(x_k) = 1$ in the first rule; $g(x_j) = 1$
in the rules~4, 5, 6; and~$h(x_j) = 1$ or~$g(x_i) = 1$ or both in the
last two rules. Moreover, $v=0$ and hence~$L_i(v) = I$ is allowed in
the last rule. These implicit rewrite rules are also important for the
implementation.

The definition of the free product~$H = H_1 \amalg_K\!  \cdots
\amalg_K\!  H_k$ yields the coproduct in the category of $K$-algebras,
with insertions~$H_k \hookrightarrow H$. This requires for each factor
a \emph{unitary splitting}~$H_i \cong K \oplus \tilde{H}_i$ as
$K$-vector spaces such that the alternating products are built
from~$\tilde{H}_i$ rather than~$H_i$, with~$1_H$ being the empty
product. In our case, the implicit rules will remove the
units~$1_\ogalg \in \ogalg$ and~$I^* \in \matma^*$ and~$1_{K[A]} \in
K[A]$ in favor of the new unit in~$K \subset \ogalg[\cum]$. We use the
unitary splittings
\begin{align*}
  \ogalg &= K \oplus \cup_{n > 0} \, \ogalg_n,\\
  \matma^* &= K \oplus \matsa^*,\\
  K[A] &= K \oplus \{ p \in K[A] \mid \deg(p) > 0\}
\end{align*}
for the three factors in the free product~\eqref{eq:free-prod}.

A detailed description of the \emph{computational realization}
of~$\ogalg[\cum]$ will be the subject of another paper. For our
present purposes it suffices to observe that we can start from the
free $K$-algebra generated by a basis~$(b_i)_{i\in\NN}$ of~$\ogalg$
such that~$b_0 = 1$, the substitutions~$M^* \in \matma$ and the
integrators~$(A_n)_{n > 0}$. Then one has the following extra rules in
addition to the explicit and implicit rules of
Table~\ref{fig:red-rules}:
\begin{itemize}
\item Every occurrence of~$b_0$, $I^*$, and $1_{K[A]}$ is removed,
  while the empty word is treated as the true unit~$1 \in
  \ogalg[\cum]$.
\item After operating on a basis function~$b_i \in \ogalg$ by
  multiplication, integration or substitution, the resulting element
  of~$\ogalg$ is again expanded with respect to the basis~$(b_i)_{i
    \in \NN}$.
\item Within monomials of~$K[A]$, one enforces the ordering~$A_1 > A_2
  > \cdots$, thus achieving normal forms~$A^\alpha = A_1^{\alpha_1}
  A_2^{\alpha_2} \cdots$ for~$\alpha\colon \nonzero{\NN} \to
  \NN$. In view of Table~\ref{fig:red-rules}, reduction
  ensures~$\alpha_i \in \{0, 1\}$.
\end{itemize}
For the subtle interplay between canonical forms and basis expansion,
we refer to~\cite[\S4.4]{RosenkranzRegensburgerTecBuchberger2012},
where this is discussed for ordinary integro-differential operators
(but the same applies here \emph{mutatis mutandis}).

\begin{proposition}
  \label{prop:action}
  Let~$(\galg_n, \cum^{x_n})_{n \in \NN}$ be a Rota-Baxter hierarchy
  over a field~$K$, and let~$\ogalg$ be an admissible coefficient
  domain for~$\galg$. Then the action~$\odot\colon \ogalg[\cum] \times
  \galg \to \galg$ induced by~$g \odot f = gf$, $M^* \odot f = f[M]$
  and~$A_i \odot f = \cum^{x_i} f$ is well-defined.
\end{proposition}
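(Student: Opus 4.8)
The plan is to realize the action as a single $K$-algebra homomorphism out of the free product, produced by the universal property of the coproduct, and then to check that it respects the defining congruence by inspecting it on the generators listed in Table~\ref{fig:red-rules}. Concretely, I would build a homomorphism $\Phi\colon \ogalg \amalg_K\! \matma^* \amalg_K\! K[A] \to \operatorname{End}_K(\galg)$ into the endomorphism algebra of $\galg$ under composition. By the universal property this amounts to three unital $K$-algebra homomorphisms, compatible with the unitary splittings of Definition~\ref{def:ring-partial-intops}: the multiplication representation $g \mapsto (f \mapsto gf)$ on $\ogalg$ (an algebra map since $\galg$ is commutative); the contravariant action $M^* \mapsto M^*$, which is a genuine homomorphism on the \emph{opposite} monoid algebra $\matma^*$; and $A_i \mapsto \cum^{x_i}$ on $K[A]$, which extends to the full polynomial ring precisely because the $\cum^{x_i}$ commute. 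Each factor sends its unit to $\id_\galg$, so the splittings are respected, and setting $P \odot f := \Phi(P)f$ recovers the three stipulated rules and is automatically associative.

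Because $\Phi$ is a homomorphism, the pairs $\{(P,Q) : \Phi(P) = \Phi(Q)\}$ form a congruence; hence $\Phi$ factors through $\ogalg[\cum]$ as soon as it equates the two sides of every generator in Table~\ref{fig:red-rules}. Most rows are immediate. The row $M^* g = g[M]\,M^*$ is just multiplicativity of $M^*$. The commutation rows $A_j\,g(x_i) = g(x_i)\,A_j$ and $A_i\,g(x_j) = g(x_j)\,A_i$ are Lemma~\ref{lem:simple-prop}\eqref{it:trivial-int}, since $g(x_i) \in \galg_{(j)}'$ whenever $i \neq j$. The annihilation $M^* A_i = 0$ for $M_{i\bullet} = 0$ follows from $M = E_i M$, whence $M^* = M^* E_i^*$ and $M^* \cum^{x_i} = M^*(E_i^* \cum^{x_i}) = 0$ by Lemma~\ref{lem:simple-prop}\eqref{it:eval-van}. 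The last row is the weight-zero Rota-Baxter axiom for $\cum^{x_j}$, rewritten as $\cum^{x_j}(g\,\cum^{x_j} f) = (\cum^{x_j} g)(\cum^{x_j} f) - \cum^{x_j}((\cum^{x_j} g)\,f)$. Finally, the normalization and ordering rows are verbatim the operator identities of Propositions~\ref{prop:1d-subst-rule} and~\ref{prop:order-int}, each of which is already valid on all of $\galg$.

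The one genuinely delicate point, and the step I expect to be the main obstacle, is the coalescence row: Proposition~\ref{prop:coalesc} establishes it only \emph{on}~$\galg_n$, whereas well-definedness of $\odot$ demands $\Phi(\text{LHS}) = \Phi(\text{RHS})$ on the whole of $\galg$. The key observation is that, in the final form of the relation, the slack index $n+1$ occurs only inside the coefficient $\tilde{h}$ and is reabsorbed at once by the transposition $[k\;n\!+\!1]$ applied to the multiplier $\chi''$; thus the right-hand side contains \emph{no} substitution that could capture an argument variable beyond $x_n$, and every integrator, eliminant and multiplier appearing there acts within $x_1, \dots, x_n$. Consequently the slack position is a dummy: by straightness, padding $v,w$ with zeros and rerunning the derivation with slack $x_{m+1}$ in place of $x_{n+1}$ produces the \emph{identical} operator, because the transported multiplier $\chi''[k\;n\!+\!1]$ is the same function of $x_k$ however the slack is labelled (and the embedded eliminants are unchanged by zero-padding).

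I would therefore fix an arbitrary $f \in \galg_m$, choose the slack above $m$, apply Proposition~\ref{prop:coalesc} on $\galg_m$ to that padded instance, and use the preceding independence to conclude that both sides of the tabulated relation agree at $f$. Since $f$ and $m$ were arbitrary and $\galg = \bigcup_m \galg_m$, the coalescence identity holds on all of $\galg$. With every generator of the congruence thus verified, $\Phi$ descends to a homomorphism on $\ogalg[\cum]$, and the induced action $\odot$ is well-defined.
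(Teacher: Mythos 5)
Your proposal is correct and follows essentially the same route as the paper: interpret $\ogalg[\cum]$ as a quotient of a free algebra mapping into $\operatorname{End}_K(\galg)$, and check each row of Table~\ref{fig:red-rules} against the corresponding fact (multiplicativity of the action for Rule~1, Lemma~\ref{lem:simple-prop} for Rules~2--4, Propositions~\ref{prop:1d-subst-rule}, \ref{prop:order-int}, \ref{prop:coalesc} for Rules~5--8, and the Rota-Baxter axiom for Rule~9). Your extra paragraph on the coalescence row --- that the identity of Proposition~\ref{prop:coalesc} is stated only on $\galg_n$ but extends to all of $\galg$ because the slack index is a dummy that can be chosen above any given $m$ --- is a genuine subtlety the paper's proof passes over in silence, and your resolution is consistent with the paper's own remarks on slack variables preceding Lemma~\ref{lem:slack}.
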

\begin{proof}
  As indicated, one may interpret~$\ogalg[\cum]$ as a quotient of the
  free~$K$-algebra generated by a~$K$-basis~$(b_i)_{i \in \NN}$
  of~$\ogalg$, the substitutions~$M^* \in \matma$ and the
  integrators~$(A_n)_{n > 0}$. Since the implicit rules and the three
  extra ones listed above are obvious consequences, it suffices to
  verify Table~\ref{fig:red-rules} for basis functions~$g, g_j, g_k$.
  Rule~1 is valid because~$\matma$ acts on~$\ogalg \subseteq \galg$
  via $K$-algebra homomorphisms, by hypothesis
  (Definition~\ref{def:hierarchy}). Rule~2 follows from~$M = E_i M$
  and Item~\eqref{it:eval-van} of Lemma~\ref{lem:simple-prop},
  Rules~3, 4 by Item~\eqref{it:trivial-int}. Rules~5, 6 correspond to
  Proposition~\ref{prop:1d-subst-rule}, Rule~7 to
  Proposition~\ref{prop:order-int}, Rule~8 to
  Proposition~\ref{prop:coalesc}. Finally, Rule~9 follws since
  each~$(\ogalg_n, \cum^{x_n})$ is a Rota-Baxter algebra in its own
  right.
\end{proof}

We orient the congruences of Table~\ref{fig:red-rules} from left to
right. Moreover, we agree that on the right-hand side all matrix
products, substitutions and coefficient multiplications are
immediately carried out. For example, the result of Rule~3 is a
$K$-linear combination of monomials having the form~$\ogalg \cdot
\matma^* \cdot A_i \cdot \ogalg_{(i)} \cdot \matma^*$. For showing
termination of the induced rewrite system (including the implicit and
extra rules), we use the following \emph{term order} on the underlying
free algebra. Every monomial is segmented into~$W = w_0 A_{i_1} w_1
A_{i_2} \cdots A_{i_n} w_n$ such that the words~$w_0, w_1, \dots, w_n$
do not contain any~$A$. Note that every word~$w_k$ with~$k>0$ has a
corresponding \emph{integration index}~$i_k$, and by convention we
set~$i_0 := 0$. For any subword~$w_k$, let~$\sigma(w_k)$ denote~$w_k$
viewed as an element in the following Noetherian partial order on the
word monoid over the alphabet~$\ogalg \uplus \matma^*$. We impose no
order amongst the~$g \in \ogalg$ or amongst the~$M^* \in \matma^*$
but we stipulate that~$g < M^*$ whenever~$M \neq I$; this partial
order on the alphabet is extended to a graded lexicographic partial
order on the corresponding words. Furthermore, let~$N(w_k)$ be the
total number of nonzero matrix entries~$M_{r,i_k}$ with~$r<i_k$ and
nonunit matrix entries~$M_{i_k,i_k}$, for all~$M^*$ occurring in the
word~$w_k$. If~$u_l$ is another word, we put now~$w_k < u_l$
iff~$(N(w), -i_k, \sigma(w_k)) < (N(u), -j_l, \sigma(u_l))$ in the
pure lexicographic sense. Finally, let~$U = u_0 A_{j_1} u_1 A_{j_2}
\cdots A_{j_m} u_m$ be a second monomial in the free algebra. Then we
define~$W < U$ iff~$(w_n, \dots, w_1, w_0) < (u_m, \dots, u_1, u_0)$
in the sense of the graded lexicographic order induced by the previous
order given on the components.

\begin{theorem}
  \label{thm:termination}
  Let~$(\ogalg, \cum)$ be a separated Rota-Baxter hierarchy over a
  field~$K$. Orienting the rules of Table~\ref{fig:red-rules} from
  left to right, one obtains a Noetherian reduction system.
\end{theorem}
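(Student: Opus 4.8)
The plan is to read the left-to-right orientation of Table~\ref{fig:red-rules} (together with the implicit and extra rules) as a reduction relation on the free algebra $\ogalg \amalg_K \matma^* \amalg_K K[A]$ and to show that every step strictly decreases the monomial order $\prec$ defined immediately before the theorem. Since each rule rewrites a single monomial $m$ into a $K$-linear combination of monomials $m_\mu$, and a step takes place inside an arbitrary context, $p\,m\,q \to \sum_\mu c_\mu\,p\,m_\mu\,q$, the first thing I would establish is that $\prec$ is \emph{compatible with two-sided multiplication}: $m_\mu \prec m$ implies $p\,m_\mu\,q \prec p\,m\,q$ for all contexts $p,q$. This is a routine but load-bearing check, the only delicacy being the merging of boundary subwords; because the order compares first the number of integrators and then the reversed subword sequence $(w_n,\dots,w_0)$ via the per-subword measure $(N(w_k),-i_k,\sigma(w_k))$, prepending or appending a context acts monotonically. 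Granting compatibility, a single step replaces one monomial of a polynomial by finitely many $\prec$-smaller ones, so the finite multiset of its monomials strictly decreases in the multiset extension of $\prec$, and termination reduces to well-foundedness of that extension.

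Next I would verify well-foundedness of $\prec$. It is built from well-founded pieces: the alphabet order on $\ogalg \uplus \matma^*$ (two antichains with $g < M^*$ for $M \neq I$) has height at most two, its graded-lexicographic extension to words ($\sigma$) is well-founded, and $N(\cdot)$ takes values in $\NN$; finally $\prec$ is a graded-lexicographic order on finite subword sequences, graded by the \emph{number of integrators}. The one problematic coordinate is $-i_k$, whose naive order is not well-founded. It is tamed by the grading: the number of occurrences of $A$ never increases (Rules~2,~6 and~9 strictly lower it, the remaining rules preserve it), so along any reduction the integrator count is bounded; and the only rule that raises an integration index past the current dimension is the coalescence Rule~8, which does so by exactly one slack variable while \emph{strictly increasing} the later integration index. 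A short argument then shows that along any reduction from a fixed polynomial the indices that can occur stay bounded, so that $\prec$ restricted to the reachable monomials is a genuine well-order.

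The bulk of the work is the per-rule decrease, which I would organise by which coordinate falls. For the coefficient-transport rules (Rule~1 and Rules~3,~4, together with unit removal and basis re-expansion) the integrator count and every $N(w_k)$ and $i_k$ are unchanged and the affected subword strictly decreases in $\sigma$: Rule~1 moves the larger letter $M^*$ to the right of a coefficient, while Rules~3,~4 shorten the subword following an integrator by transporting a coefficient into an earlier (hence less significant, in the reversed sequence) subword. Rules~2 and~6 eliminate an integrator and Rule~9 lowers the integrator count outright, so all three fall in the dominant grade. For the normalization Rules~5,~6 (Proposition~\ref{prop:1d-subst-rule}) the point is that the matrix surviving with the new integrator is the pure eliminant $L_i(l)$, whose $N$-value at index $i$ is $0$, so the integrator subword drops in $N$ whenever the incoming matrix is not yet eliminant-shaped; the residual case of an already-normalized integrator $\cum^{x_i} h(x_i)\,L_i(v)^*$ is simply irreducible (applying the rule returns the term unchanged) and so poses no threat. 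Finally, for the ordering Rule~7 (Proposition~\ref{prop:order-int}) and the coalescence Rule~8 (Proposition~\ref{prop:coalesc}) the decrease comes through $-i_k$: the integration index of the \emph{last} integrator strictly increases (from $i$ to $j>i$ in Rule~7, from $i$ to $k>i$ in Rule~8), while the higher-priority data at that subword position ($N=0$ for the eliminant-tailed subword, and all subwords to its right in the reversed sequence) are unchanged.

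I expect the main obstacle to be exactly the interaction between the order-reversing coordinate $-i_k$ and the index growth caused by the slack variable in coalescence: one must rule out an infinite alternation of coalescences, each bumping the dimension and pushing an index upward, by showing that Rule~8 can fire only finitely often — equivalently, that the integration indices remain bounded along every reduction from a fixed element. Here the non-increasing integrator grade must be combined with the observation that each coalescence turns a same-axis pair into two integrators on distinct axes (which Rule~7 then forces into strictly ascending order), so no new same-axis pair is created without a genuine decrease elsewhere. Once the reachable monomials are seen to have bounded indices, well-foundedness of $\prec$ and hence the Noetherian property follow, the remaining per-rule inequalities being direct if tedious consequences of the formulas in Propositions~\ref{prop:1d-subst-rule}--\ref{prop:coalesc}.
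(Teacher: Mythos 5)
Your proposal follows essentially the same route as the paper: orient the rules, check per rule that every right-hand monomial is smaller in the given term order (Rules 1--4 by $\sigma$, Rule 5 by the drop in $N$, Rules 6 and 9 by the integrator count, Rules 7 and 8 by the increase of the last integration index), verify compatibility with two-sided multiplication, and conclude by well-foundedness of the graded order on subword sequences. The only difference is one of emphasis: where the paper simply asserts that the integration indices ``cannot drop infinitely often,'' you explicitly flag the non-well-foundedness of the $-i_k$ coordinate and argue boundedness of the reachable indices, which is a fair elaboration of the same argument rather than a different proof.
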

\begin{proof}
  We note first that all the rules of Table~\ref{fig:red-rules}
  respect the above term order in the sense that all monomials on the
  right-hand side are smaller than the monomial on the left-hand
  side. For Rules~1--4 this is evident. Rule~5 is only applicable
  if~$M$ does not have the form~$L_j(v)$ so that~$N(w)>0$ for the word
  on the left-hand side while clearly~$N(u) = 0$ for all rightmost
  words of the right-hands side monomials. In Rule~6 we have a drop in
  the number of integrators, so the left-hand side is greater due to
  our choice of graded order on subword sequences. Looking at Rule~7
  we see that the last subword~$w$ has~$N(w) = 0$ before and after
  reduction but its negated integration index drops from~$-i$
  to~$-j$. The same is true for Rule~8, where the index drops
  from~$-i$ to~$-k$. Finally, Rule~9 decreases the number of
  integrators and hence drops in the graded order on subword
  sequences.

  Next let us convince ourselves that~$>$ is a term order. If~$1$
  denotes the empty word, we have~$1 < W$ for all non-empty
  words. This is because the empty word has~$N(1) = 0$ and integration
  index~$0$, and~$\sigma(1)$ is the mimimum among all
  $\sigma$-words. Now we assume~$W < U$ and we must show~$WV < UV$
  and~$VW < VU$ for all words~$V$ in the free algebra. But this is
  clear since multiplication by~$V$ on either side equally
  affects~$N(w)$ and the integration index of~$w$ of each segmental
  word~$w$ within~$V$, and the order on the~$\sigma(w)$ respects
  concatenation.

  It remains to prove that~$>$ is Noetherian. By way of contradiction,
  assume an infinite descending chain exists. Since~$>$ is graded on
  subword sequences, there must then be an infinite descending chain
  of words~$w_0 A_{i_1} w_1 A_{i_2} \cdots A_{i_n} w_n$ with fixed
  sequence length~$n$. Since the~$N(w_k)$ and the integration indices
  cannot drop infinitely often, we obtain an infinite descending chain
  of~$\sigma(w_k)$ instances. But this is impossible because the order
  on the~$\sigma(w_k)$ is Noetherian.
\end{proof}

In a Noetherian reduction system, every element has at least one
normal form. Each normal form may be taken to be a representative of
the corresponding congruence class of the quotient
algebra~\eqref{eq:free-prod}. In our case, we can go one step further:
We can specify \emph{monomial normal forms}, such that every element
of the quotient algebra can be written as a $K$-linear combination of
these (congruence classes of) monomomial normal forms. This can be
done as follows.

Let us call a word (of the underlying free algebra) a \emph{line
  integrator of index $i$} if is has the form~$A_i b(x_i) L_i(v)^*$
with a $K$-basis element~$b \in \ogalg_1$, where again~$b = 1$
or~$L_i(v) = I$ means absence. A \emph{volume integrator} is word of
the form~$K = b \, M^* J_1 \cdots J_r$, where~$b \in \ogalg$ is a
basis element and~$M^* \in \matma^*$ with~$M_{i_1\bullet} \neq 0$
for~$r>0$, and where~$J_1, \dots, J_r$ are line integrators of
indices~$i_1 < \cdots < i_r$.

\begin{theorem}
  \label{thm:confluence}
  Let~$(\ogalg, \cum)$ be a separated Rota-Baxter hierarchy over a
  field~$K$. Then every element of~$\ogalg[\cum]$ is a $K$-linear
  combination of terms~$b \, M^* J_1 \cdots J_r$, where~$b \in \ogalg$
  is a basis element and~$M^* \in \matma^*$ with~$M_{i_1\bullet} \neq
  0$ for~$r>0$, and where~$J_1, \dots, J_r$ are integrators of
  indices~$i_1 < \cdots < i_r$.
\end{theorem}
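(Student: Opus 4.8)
The plan is to combine the termination result of Theorem~\ref{thm:termination} with a structural characterization of the irreducible monomials. By Theorem~\ref{thm:termination} the reduction system of Table~\ref{fig:red-rules} (together with the implicit and extra rules) is Noetherian, so repeated rewriting of any element of the free product~\eqref{eq:free-prod} halts at a $K$-linear combination of monomials to which no rule applies. Since $\ogalg[\cum]$ is spanned by the classes of the monomials of the underlying free algebra, it suffices to prove the purely combinatorial claim that every irreducible monomial is a volume integrator $b\,M^* J_1 \cdots J_r$ of the asserted shape. I would establish this by the contrapositive: any monomial violating the shape still matches the left-hand side of some rule, so only volume integrators can be irreducible. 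Note that I do not need confluence here—uniqueness of normal forms is only Conjecture~\ref{conj:confluence}—so termination plus this shape characterization already yields the spanning statement.

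First I would normalize the maximal $A$-free segments. In a reduced word of the free product the letters alternate among the three factors $\ogalg$, $\matma^*$, $K[A]$, so adjacent coefficients are already multiplied in $\ogalg$ and adjacent substitutions composed in $\matma^*$. Applying Rule~1 ($M^* g \cong g[M]\,M^*$) exhaustively within each segment between two integrators (and before the first) pushes every coefficient to the left of every substitution, so the segment collapses to a single coefficient followed by a single substitution. This brings the monomial into the generic form $b_0 M_0^*\, A_{i_1} b_1 M_1^*\, A_{i_2} \cdots A_{i_n} b_n M_n^*$, after which the coefficients are re-expanded in the fixed basis of $\ogalg$.

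Next I would analyze the block trailing each integrator and enforce ascending order. Expanding $b_k$ through its component sequences, I split off the factor depending on $x_{i_k}$ and use Rules~3 and~4 ($A_j g(x_i)\cong g(x_i)A_j$ for $i\neq j$) to move every factor depending on another variable to the left of the integrator; in an irreducible monomial the coefficient immediately following an integrator must therefore depend only on that integrator's own variable. For the trailing substitution, Rule~6 removes the integrator whenever the relevant column vanishes, while Rule~5 (Proposition~\ref{prop:1d-subst-rule}) applies exactly when $N(\cdot)>0$, that is, when the substitution is not yet an eliminant $L_{i_k}$ along the $x_{i_k}$-axis, rewriting the block into the line-integrator form $A_{i_k} b_k(x_{i_k}) L_{i_k}(v_k)^*$. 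Once every block is a line integrator, two adjacent integrators of equal index are reducible by Rules~8 or~9 (Proposition~\ref{prop:coalesc}) and an adjacent descending pair by Rule~7 (Proposition~\ref{prop:order-int}); hence the indices of an irreducible monomial strictly ascend, $i_1<\cdots<i_r$. Finally, Rule~2 ($M^* A_i\cong 0$ when $M_{i\bullet}=0$) forces $M_{i_1\bullet}\neq 0$ in any nonzero normal form, which supplies the last condition.

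The main obstacle I expect is the bookkeeping of this normalization rather than any single deep step: I must verify that the case analysis is genuinely exhaustive—that a monomial failing \emph{any} of the three structural requirements (coefficient depending only on the integration variable, eliminant substitution, strictly ascending indices) really does match some rule—and that this meshes with the mandatory re-expansion, since substituting and integrating on the right-hand sides produce fresh coefficients that must again be decomposed via component sequences before the next rule can be read off. What keeps this tractable is that the term order of Theorem~\ref{thm:termination} certifies that normalizing a later integrator never reintroduces a violation already removed from an earlier one, so the successive normalizations compose without looping and every halting configuration is indeed a volume integrator.
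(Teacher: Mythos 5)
Your proposal is correct and follows essentially the same route as the paper's proof: normalize each $A$-free segment with Rule~1, restrict the coefficient to the integration variable with Rules~3--4, force the substitution into eliminant form with Rules~5--6, order and coalesce with Rules~7--9, and clean up with Rule~2. The only cosmetic difference is that you phrase it as ``termination plus a shape characterization of irreducible monomials'' whereas the paper describes the reduction strategy directly, but the underlying case analysis is identical.
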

\begin{proof}
  We have to show that every word can be reduced to a linear
  combination of the form stated in the Theorem. Using Rule~1 will
  ensure all subwords~$w_k$ in the canonical segmentation~$w_0 A_{i_1}
  w_1 A_{i_2} \cdots A_{i_n} w_n$ have the form~$b_k \, M_k^*$ for
  some~$b_k \in \ogalg$ and~$M_k \in \matma$. By Rules~3 and~4 we can
  achieve that~$b_k \in \ogalg_{(i_k)}$ and by Rules~5/6 that~$M_k =
  L_{i_k}(v)^*$ for some~$v$; again~$v=0$ means absence. Hence all
  monomials can be reduced to the form~$b \, M^* J_1 \cdots J_r$,
  where~$b \in \ogalg$ is any $K$-basis elements and~$M^* \in
  \matma^*$ is arbitrary. If the integration index drops between any
  neighboring integrators~$J_i, J_{i+1}$, application of Rule~7 fixes
  this. Hence we can ensure that the sequence of integration indices
  is increasing. For achieving a stricly increasing sequence, it
  remains to apply Rule~8 if~$w \neq 0$ and Rule~9 otherwise. Finally,
  Rule~2 is used to guarantee~$M_{i_1\bullet} \neq 0$ if~$r>0$.
\end{proof}

In fact, it is our conjecture that the reduction system induced by
Table~\ref{fig:red-rules} (or perhaps a slight variation of it) is not
only Noetherian but also \emph{confluent} (modulo the~$K$-algebra
axioms, as usual). In other words, we have a noncommutative Gr{\"o}bner
basis for the relation ideal defined by the noncommutative polynomials
corresponding to these rules. As a consequence, the volume integrators
would be a $K$-basis of~$\ogalg[\cum]$.

\begin{conjecture}
  \label{conj:confluence}
  Let~$(\ogalg, \cum)$ be a separated Rota-Baxter hierarchy over a
  field~$K$. Orienting the rules of Table~\ref{fig:red-rules} from
  left to right, one obtains a convergent (i.e.\@ Noetherian and
  confluent) reduction system.
\end{conjecture}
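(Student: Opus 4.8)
The plan is to leverage the fact that termination is already established in Theorem~\ref{thm:termination} and thereby reduce the entire problem to \emph{local confluence}. By Newman's Lemma, a terminating reduction system is confluent precisely when it is locally confluent, so it suffices to show that every critical pair is joinable. Since the rewriting takes place inside the free product $\ogalg \amalg_K\! \matma^* \amalg_K\! K[A]$ modulo the $K$-algebra axioms (and the basis re-expansion in $\ogalg$), the appropriate framework is Bergman's Diamond Lemma: because the oriented rules of Table~\ref{fig:red-rules} are compatible with the Noetherian term order constructed just before Theorem~\ref{thm:termination}, confluence is equivalent to the resolvability of all \emph{overlap ambiguities} between left-hand sides (together with the implicit and extra rules). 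A routine inspection shows there are no genuine inclusion ambiguities, since no left-hand side is a contiguous factor of another; thus the whole proof amounts to enumerating the overlaps and verifying that each closes.

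First I would dispose of the \emph{structural} overlaps. Those arising among Rule~1 (the commutation $M^* g \cong g[M]\,M^*$), the unit-removal rules, the basis-expansion step in $\ogalg$, and the monomial ordering $A_1 > A_2 > \cdots$ in $K[A]$ are resolved by exactly the bookkeeping used for ordinary integro-differential operators in \cite[\S4.4]{RosenkranzRegensburgerTecBuchberger2012}; the only real content is that the $K$-algebra action of $\matma$ is compatible with re-expanding products into the fixed basis $(b_i)$. A slightly less trivial family consists of overlaps of the form $A_j\cdots A_i\cdots A_k$ with $i<j<k$, which test the coherence of the ordering Rule~7: the two ways of sorting three integrators into ascending index order must agree. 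This is a braiding-type coherence for Proposition~\ref{prop:order-int}, and it should reduce to the eliminant commutation relations \eqref{eq:lmat-asc}, \eqref{eq:lmat-desc} together with Corollary~\ref{cor:eval-int-comm} and straightness.

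The hard part will be the critical pairs in which the coalescence Rule~8 (Proposition~\ref{prop:coalesc}) overlaps with itself or with Rule~7. Concretely, a word $A_i\cdots A_i\cdots A_i$ with three integrators along the same axis can be coalesced either left-first or right-first, and the outcomes must coincide; similarly an overlap $A_j\cdots A_i\cdots A_i$ (with $i<j$) permits either reordering the outer pair by Rule~7 or coalescing the inner pair by Rule~8 first. Unwinding Rule~8 through Lemma~\ref{lem:coalesc}, both branches reduce to iterated applications of the vertical substitution rule \eqref{eq:vert-subst}, and their equality becomes a coherence identity among nested eliminants and the slack substitution $J = I_n \oplus e_k$. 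I expect these identities to be forced by the same Hopf-algebraic machinery that powered the proof of Theorem~\ref{thm:induced-hierarchy} -- coassociativity \eqref{eq:it-coassoc}, the counit relations \eqref{eq:it-counit}, and the antipode identity $\nabla(\id \otimes S)\,\Delta = \evl$ -- but verifying them uniformly is the principal obstacle: the overlaps are numerous and the intermediate expressions carry several slack variables, so the computation is best organized (and very likely first discovered) with a computer-algebra implementation of the rewriting. A strong sanity check is that all of these critical pairs visibly close in the analytic model $C^\infty(\RR^\infty)$ of Example~\ref{ex:classical}, where every rule is a genuine instance of the substitution rule for integrals; the substantive task is to show that closure is dictated by the axioms alone and not merely by that one model.

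Finally, should some critical pair fail to join, the Diamond Lemma prescribes adjoining the resulting difference as a new rule and rerunning completion; this is the reason for the hedge ``or perhaps a slight variation of it'' in the statement. For any such added rule one must re-verify compatibility with the term order of Theorem~\ref{thm:termination}, so that termination -- and hence the reduction of confluence to local confluence -- is preserved throughout the completion process. If the process terminates with all critical pairs resolved, the volume integrators of Theorem~\ref{thm:confluence} form a $K$-basis of $\ogalg[\cum]$, yielding the desired noncommutative Gr\"obner basis for the relation ideal.
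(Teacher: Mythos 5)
The statement you are addressing is Conjecture~\ref{conj:confluence}: the paper deliberately leaves it unproved, noting only that the natural strategy is Bergman's Diamond Lemma and that the resulting case analysis is lengthy and has been completed only in part. Your proposal correctly identifies that same strategy --- Theorem~\ref{thm:termination} gives Noetherianity, so by Newman's Lemma it suffices to resolve all overlap ambiguities --- and your taxonomy of the critical pairs (structural overlaps, the braiding coherence of Rule~7, self-overlaps of Rule~8 and mixed Rule~7/Rule~8 overlaps) is a sensible organization of the work. But what you have written is a plan, not a proof: the entire mathematical content of the conjecture is the joinability of those critical pairs, and you do not resolve a single one. Phrases such as ``it should reduce to,'' ``I expect these identities to be forced by,'' and ``verifying them uniformly is the principal obstacle'' concede exactly the point at issue. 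The sanity check in the model $C^\infty(\RR^\infty)$ establishes only that the two sides of each critical pair act identically on that hierarchy, i.e.\@ that the difference lies in the kernel of the action of Proposition~\ref{prop:action}; it does not show the difference reduces to zero under the rewrite rules, which is what confluence requires.

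Two further points deserve care if you pursue this. First, even setting up the Diamond Lemma rigorously here is nontrivial: the reduction system is not a word rewriting system over a finite alphabet but lives on the free product $\ogalg \amalg_K \matma^* \amalg_K K[A]$ with infinitely many rule instances carrying side conditions (e.g.\@ Rule~5 applies only when $M$ is not already an eliminant $L_j(v)$), basis re-expansion in $\ogalg$, and Einstein-summed component sequences on the right-hand sides; your claim that ``a routine inspection shows there are no genuine inclusion ambiguities'' depends on these side conditions and needs to be argued, not asserted. Second, your final paragraph about adjoining new rules via completion changes the statement being proved: confluence of a Knuth--Bendix completion of Table~\ref{fig:red-rules} is not confluence of Table~\ref{fig:red-rules} itself, and one would additionally have to check that any adjoined rule is compatible with the term order and preserves the normal-form description of Theorem~\ref{thm:confluence}. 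In short, your route is the one the authors themselves indicate, but the proposal stops exactly where the proof would have to begin.
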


The confluence proof (of which we have completed significant parts)
appears to be rather lengthy and laborious, at least if one follows
the well-known strategy of the \emph{Diamond
  Lemma}~\cite{Bergman1978}. Moreover, such a proof will have a much
more computational flavour than the current article. Therefore we plan
to present the confluence result in a separate publication.

\section{Conclusion}

As pointed out earlier, our construction of~$\ogalg[\cum]$ is
originally motivated from \emph{computational algebra}: We want to
build up algorithmic support for calculations with integral operators,
substitutions and function expansions. In particular, we aim to
represent, compute and manipulate (e.g.\@ factor) Green's operators
for suitable classes of LPDE boundary problems, as explained in the
Introduction. This necessitates the following further tasks:
\begin{itemize}
\item Adjoining derivations to form partial integro-differential
  operators with linear substitutions.
\item For both operator rings, prove confluence to ensure unique
  normal forms and hence deciding equality (relative to deciding
  equality in the given coefficient domain~$\ogalg$).
\item Develop methods for computing and factoring LPDE Green's
  operators, as for example in~\cite{RosenkranzPhisanbut2013}.
\item Study how the classical Green's function can be extracted from
  the Green's operator of certain LPDE boundary problems.
\item Provide an implementation in a computer algebra package.
\end{itemize}
We plan to address these and related items in future work.

Apart from these rather obvious further developments, the material
presented in this paper also contains some more intrinsic topics that
might deserve a detailed investigation. Our notion of Rota-Baxter
hierarchy~$(\galg, \cum^{x_n})_{n \in \NN}$ is a first attempt to
provide an algebraic framework to study integral operators and linear
substitutions. But one could also try \emph{laying the foundations
  deeper}, for example by using operads with a scaling action. The
operad of Rota-Baxter
algebras~\cite{BaiBellierGuoNi2013,BaiGuoPei2015} has played a key
role in understanding splitting of operads initiated by
Loday~\cite{Loday2002}. In our case, note that the operation $f(x_1)
\mapsto f(x_1 + x_2)$ in the hierarchy~$\galg$ is similar to a
coproduct except that it is of type~$\mathcal{F}_1 \to \mathcal{F}_2$
rather than~$\mathcal{F}_1 \to \mathcal{F}_1 \otimes
\mathcal{F}_2$. Working on multivariate functions~$f(x_1, \dots, x_n)$
and using similar substitutions
\begin{equation}
  \label{eq:operad}
  \Delta_i := \Delta_{i,n}\colon\quad
  x_j \mapsto x_j \: (j<i), \quad x_i \mapsto x_i+x_{i+1}, \quad
  x_j \mapsto x_{j+1} \: (j>i)
\end{equation}
one gets a family of operations $\Delta_{i,n}\colon \galg_n \to
\galg_{n+1}$ generalizing the coproduct of a bialgebra. One may check
that they satisfy the commutation rule~$\Delta_j \, \Delta_i =
\Delta_{i+1} \, \Delta_j$ for~$i \ge j$. If one writes~$\alpha\colon
\galg_2 \to \galg_1$ for the addition map, one has~$\Delta_{i,n}(f) =
f \circ_{n,2,i} \alpha$ for~$f \in \galg_n$. Using the language of
PROPs~\cite{Loday2008,Markl2008,Vallette2007}, the commutation rule
follows from the axioms and the associativity of~$\alpha$. In a
similar vein, we can express the scaling action
by~$\mmt{i}{\lambda}^*(f) = f \circ_{n,1,i} \lambda^*$
where~$\lambda^*\colon \galg_1 \to \galg_1$ is the substitution~$x
\mapsto \lambda x$. From the~$\Delta_{i,n}$ and the~$\mmt{i}{\lambda}$
one can then build a Rota-Baxter hierarchy by decomposing matrices
into permutation matrices, diagonal matrices and evaluations.

The advantage of this approach is that it would lead naturally to a
generalization where~$f \circ_{n,m,i} g$ is allowed for any~$g \in
\galg_m$, not only for~$g=\alpha$ and~$g = \lambda^*$. For the
Rota-Baxter operators we would then require the \emph{general
  substitution rule} of multivariable calculus (and for the
derivations the general chain rule), as it is satisfied
for~$C^\infty(\RR^\infty)$.

Another possible generalization is more geometrically motivated: As
mentioned earlier (Remark~\ref{rem:geometric}),
for~$C^\infty(\RR^\infty)$ one can interpret our formulation of the
chain rule as integration over certain parallelepipeds. However, this
is not sufficient to provide a genuine theory of \emph{integration
  over simplices}. In fact, it cannot: The definition of simplices
requires some notion of orientation (e.g.\@ coming from the order
structure in the case~$K = \RR$) while we have only assumed algebras
over a general field~$K$. Nevertheless, it seems what we are missing
is ``not much'' in a certain sense. Perhaps \emph{oriented
  sets}~\cite[\S2.6, Expl.~9b]{BergeronLabelleLeroux1998} could be
used for endowing the vertices of a simplex with an orientation. On a
more fundamental level, \emph{abstract simplicial complexes} might
provide just the right structure. In either case, the simplex would
have one ``variable vertex'' (running along a fixed~$1$-cell), to
ensure an indefinite integral and hence some kind of Rota-Baxter
operator.

This in turn could be compared to the integration of differential
forms over simplices, a species of \emph{definite integral} that is
crucial in de Rham cohomology (where $k$-chains are built from
$k$-simplices). On the manifold~$\RR$, the connection between the
Rota-Baxter operator of indefinite integration and the definite
integration of differential forms is given by the Fundamental Theorem
of Calculus. Roughly speaking, its generalization to higher dimensions
is Stokes' Theorem. It would be interesting to explore this connection
in the light of Rota-Baxter hierarchies.

\section*{Acknowledgments}

This work was supported by the National Science Foundation of US
(Grant No. DMS 1001855), the Engineering and Physical Sciences
Research Council of UK (Grant No. EP/I037474/1) and the National
Natural Science Foundation of China (Grant No. 11371178).



\end{document}